\newcommand{\longsquiggly}{\xymatrix{{}\ar@{~>}[r]&{}}}
\theoremstyle{plain}
\numberwithin{equation}{section}
\newtheorem{theorem}{Theorem}[section]
\newtheorem{proposition}[theorem]{Proposition}
\newtheorem{lemma}[theorem]{Lemma}
\newtheorem{corollary}[theorem]{Corollary}
\newtheorem{claim}[theorem]{Claim}
\theoremstyle{definition}
\newtheorem{remark}[theorem]{Remark}
\newtheorem{example}[theorem]{Example}
\newtheorem{set-up}[theorem]{Set-up}
\newtheorem{definition}[theorem]{Definition}
\newcommand*{\QEDA}{\hfill\ensuremath{\blacksquare}}
\newcommand*{\QEDB}{\hfill\ensuremath{\square}}
\tikzstyle{decision} = [diamond, draw, , 
\tikzstyle{block} = [rectangle, draw, , 
\tikzstyle{block1} = [rectangle, draw, , 
\tikzstyle{line} = [draw, -latex']
\tikzstyle{cloud} = [draw, ellipse,, node distance=3cm,
\begin{document}

\title[Extendability of general $K3$ surfaces without Gaussian maps ]{Extendability of general $K3$ surfaces without Gaussian maps and classification of non-prime Fano threefolds}

\author[P. Bangere]{Purnaprajna Bangere}
\address{Department of Mathematics, University of Kansas, Lawrence, USA}
\email{purna@ku.edu}

%\author[F.J. Gallego]{Francisco Javier Gallego}
%\address{Departamento de \'Algebra, Geometr\'ia y Topolog\'ia and Instituto de Matem\'atica Interdisciplinar,
%Universidad Complutense de Madrid, Spain}
%\email{gallego@mat.ucm.es}

\author[J. Mukherjee]{Jayan Mukherjee}
\address{Department of Mathematics, Oklahoma State University, Stillwater, USA}
\email{jayan.mukherjee@okstate.edu}

\subjclass[2020]{14B10, 14N05, 14J28, 14J45}
\keywords{extendability, $K3$ surfaces, canonical curves, Fano threefolds, Mukai varieties, projective degenerations, ribbons}

%\subjclass[2020]{14B05, 14B10, 14D06, 14D15, 14D20, 14E30, 14J10, 14J45}
%\keywords{compactification of moduli spaces, simple normal crossings, semi-log-canonical singularities, smoothings, deformations of morphisms, multiple structures, Fano varieties, Calabi-Yau varieties, varieties of general type, moduli of varieties of general type.}

\maketitle
\begin{abstract}

In \cite{BM24}, we introduced an approach to the question of extendability of projective varieties via degeneration to ribbons. In this article we build on these methods to give a new proof of optimal results on the extendability of general non-prime $K3$ surfaces, classification of non-prime \textcolor{black}{Fano threefolds and} Mukai varieties and the irreducibility of their Hilbert schemes. 
\textcolor{black}{The methods in this article also show the non-extendability of prime $K3$ surfaces for infinitely many values of $g$, for example when $g$ is of the form $g = 4k+1$, $k \geq 5$.} 
This involves degenerations of $K3$ surfaces to ribbons on embedded Hirzebruch surfaces, called $K3$ carpets. We directly give optimal upper bounds on the cohomology of the twisted normal bundle of the $K3$ carpets instead of computing coranks of Gaussian maps of the canonical curve sections as in \cite{CLM94}, \cite{CLM98}. 
As a result of independent interest, we show such $K3$ carpets also appear as degenerations of smoothable simple normal crossings of two Hirzebruch surfaces embedded by arbitrary linear series intersecting along an anticanonical elliptic curve. 
Such type II degenerations constitute a smooth locus of codimension $6$ in the Hilbert scheme of $K3$ surfaces. 

\end{abstract}
%\section{Observations to discuss with CLM and highlight in the paper}

%\begin{enumerate}
 %   \item I think we are recovering Wahl's theorem for non-prime  $K3$ surfaces. 
    %Since $\alpha(\widetilde{Y}) = 0$, we have $\alpha(X) = 0$ for a general $K3$. 
  %  In \Cref{H^0(N(-2))}, we have shown $h^0(N_{\widetilde{Y}}(-2\widetilde{H})) = 0$ in order to compute the dimension of the tangent space at the Hilbert point represented by the cone over a general $K3$. This implies $h^0(N_{X}(-2H)) = 0$ for a general non-prime $K3$ surface. Therefore $h^0(N_{C}(-H)) \geq N+1$ and hence $\alpha(C) = \operatorname{corank}(\Phi_{\omega_{C}}) \geq 1$. This is an important consequence of our method of directly computing cohomology of twists of the normal bundle on a $K3$ surface. 

   % \item The results of \cite[Lemma $4$]{CLM93} (Gaussian map of graph curve techniques), \cite{CLM94}, \cite[Proposition $2.22$]{CLM98} (Gaussian map on $K3$ surface techniques) also recovers Wahl's theorem. So why dont they highlight it ? Is the argument in item $[(1)]$ correct ?
    %They show $\operatorname{corank}(\Phi_{\omega_C}) \leq 1$ and crucially uses Wahl's theorem to to conclude that $\operatorname{corank}(\Phi_{\omega_C}) $ cannot be zero. 
%\end{enumerate}

\section{Introduction}

\begin{definition}\label{extendability}
Let $X \subset \mathbb{P}^M$ be an irreducible nondegenerate variety of codimension at least $1$. Let $k \geq 1$ be an integer. We say that $X$ is $k-$ extendable if there exists a variety $W \subset \mathbb{P}^{M+k}$
different from a cone, with dim $W =$ dim $X + k$ and having $X$ as a section by a $M-$ dimensional linear space such that $W$ is smooth along $X = W \cap \mathbb{P}^M$. We say that $X$ is precisely $k-$ extendable if it is $k-$ extendable but not $(k + 1)-$ extendable. The variety $W$ is called a $k$-extension of $X$. We say that $X$ is extendable if it is $1-$ extendable.  
\end{definition}

Extendability of a projective variety is a natural and fundamental question in projective geometry. This classical question has been the topic of intense research for decades and has revealed deep connections between the geometry of an embedding, Gaussian map of curve sections, deformations of cones over the hyperplane sections etc. We refer to \cite{Lop23} for an excellent recent survey of this topic. 
%\textcolor{red}{There is an additional motivation for the study; it gives a different proof of the classification of an important class of Fano threefolds as shown in....} \textcolor{blue}{perhaps this has to be said better, but putting it here nonetheless so as not to forget.} 

\smallskip

This article revisits the question of extendability of an embedded $K3$ surface and its \textcolor{black}{applications}  \textcolor{black}{to the} boundedness, classification and Hilbert schemes of Fano threefolds and more generally Mukai varieties, which are varieties with a canonical curve section. In \cite{CLM93}, \cite{CLM94} and \cite{CLM98},  Ciliberto, Lopez and Miranda analyze the extendability of general $K3$ surfaces by computing the coranks of Wahl maps of the canonical curve sections (see also \cite{CD22}, \cite{CD24}, \cite{K20}, \cite{Tot20}). %\textcolor{blue}{In the case of prime $K3$ surfaces the authors show that the corank of the Wahl map of the canonical curve section is one by degenerating to a union of scrolls and further degenerating to a union of planes, whose hyperplane sections are graph curves.} In the case of non-prime $K3$ surfaces, the authors first prove the surjectivity of a relevant Gaussian map on a general non-prime $K3$ surface (see \cite{CLM94}) and then conclude  \textcolor{black}{that the corank of the Wahl map of the canonical curve section is one} (see \cite{CLM98}), by invoking results from the case of prime $K3$ surfaces (see \cite[Lemma $2.3$]{CLM98} \cite[Lemma $4$]{CLM93}, see also \cite{K20} ). %\textcolor{blue}{We also refer to \cite{CD22} and \cite{CD24} for coranks of Gaussian maps of non-primitive linear systems on $K3$ surfaces.}

\smallskip

\textcolor{black}{The questions on extendability has almost always involved Gaussian map, either on the surface or going to the hyperplane curve section. As noted in \cite{Lop23} "the emerging philosophy is that if one can control the corank of the Gaussian maps of the curve section of X, then one can give information on the extendability of X." It is further noted in \cite{Lop23} that there \textcolor{black}{are} only \textcolor{black}{two} other general results (\cite{KLM11}, \cite{CDGK20}) that allows studying the extendability of surfaces, without passing to the hyperplane section.}\

\smallskip

 \textcolor{black}{The method in this article  avoids the approach via Gaussian maps altogether, either on the surface or on the hyperplane curve section. It provides another general method, via degeneration to ribbons, to study extendability} by directly computing $\alpha(X) = h^0(N_{X/\mathbb{P}^M}(-1))-M-1$ and $h^0(N_{X/\mathbb{P}^M}(-2))$ for a general $K3$ surface $X$. \textcolor{black}{Once this is accomplished,} we conclude using the following theorem due to Zak-L\'vovsky. 
%Hence this gives %This theorem gives us a criterion to determine if a projective variety $X \subset \mathbb{P}^M$ is $k-$ extendable or not.

\begin{theorem}(\cite{Lvo92}, \cite{Zak91})\label{ZL}
Let $X \subset \mathbb{P}^{M}$ be a smooth irreducible nondegenerate variety of codimension at least $1$ and suppose $X$ is not a quadric. If $\alpha(X) \leq 0$, then X is not extendable. Further, given an integer $k \geq 2$, suppose that either:
\begin{itemize}
    \item[(1)] $\alpha(X) < M$ or
    \item[(2)] $H^0(N_{X/\mathbb{P}^M}(-2)) = 0$,
\end{itemize}
If $\alpha(X) \leq k-1$, then X is not $k-$ extendable.  

\end{theorem}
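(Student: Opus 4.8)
The plan is to exploit the deformation-theoretic reading of $\alpha(X)$: the group $H^{0}(N_{X/\mathbb{P}^{M}}(-1))$ is the Zariski tangent space to the functor of extensions of $X$ inside $\mathbb{P}^{M+1}$ (a variety $Y$ with $Y\cap\mathbb{P}^{M}=X$ scheme-theoretically), the twist by $\mathcal{O}(-1)$ recording the linear dependence of the extended equations on the new coordinate. Inside this space sits the \emph{cone subspace}: coning over $X$ from a vertex $p\in\mathbb{P}^{M+1}\setminus\mathbb{P}^{M}$ produces a family whose first-order classes are the Jacobian directions $\{\sum_{i}a_{i}\,\partial_{i}F_{\lambda}\}$, and for $X$ not a quadric (so $X$ has no degree-lowering vector fields and the vertex is recovered as the singular point) this family is smooth of dimension exactly $M+1$. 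Thus $\alpha(X)=h^{0}(N_{X/\mathbb{P}^{M}}(-1))-(M+1)$ is precisely the dimension of extensions of $X$ modulo cones, and the whole theorem becomes the statement that every honest extension is detected by a non-conical class in $H^{0}(N_{X/\mathbb{P}^{M}}(-1))$.

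For the base case $\alpha(X)\le 0$ I would argue by contradiction: an extension $Y\subset\mathbb{P}^{M+1}$ that is smooth along $X$ and is not a cone must contribute a class in $H^{0}(N_{X/\mathbb{P}^{M}}(-1))$ lying outside the $(M+1)$-dimensional cone subspace, forcing $h^{0}(N_{X/\mathbb{P}^{M}}(-1))\ge M+2$, i.e. $\alpha(X)\ge 1$. The one point needing care here is exactly that a genuine non-conical extension is not absorbed into the cone subspace; this is where non-quadricity is spent, and it requires no hypothesis on $N_{X/\mathbb{P}^{M}}(-2)$, matching the unconditional first assertion of the theorem.

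For $k\ge 2$ I would prove the contrapositive. Given a $k$-extension $W\subset\mathbb{P}^{M+k}$, choose a general flag refining $\mathbb{P}^{M}\subset\mathbb{P}^{M+k}$ and set $X_{j}=W\cap\mathbb{P}^{M+j}$, obtaining a tower $X=X_{0}\subset X_{1}\subset\cdots\subset X_{k}=W$ in which each $X_{j}$ is a smooth hyperplane section of $X_{j+1}$ (smoothness along $X$ coming from $W$, elsewhere from Bertini). The heart is the step inequality $\alpha(X_{j})\ge\alpha(X_{j+1})+1$, which I would extract from the two twisted normal-bundle sequences
\[
0 \to \mathcal{O}_{X_{j}} \to N_{X_{j}/\mathbb{P}^{M+j+1}}(-1) \to N_{X_{j+1}/\mathbb{P}^{M+j+1}}|_{X_{j}}(-1) \to 0,
\]
\[
0 \to N_{X_{j}/\mathbb{P}^{M+j}}(-1) \to N_{X_{j}/\mathbb{P}^{M+j+1}}(-1) \to \mathcal{O}_{X_{j}} \to 0,
\]
together with the restriction-to-hyperplane sequence of $N_{X_{j+1}/\mathbb{P}^{M+j+1}}(-1)$. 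Chasing global sections through these three sequences yields
\[
h^{0}\bigl(N_{X_{j}/\mathbb{P}^{M+j}}(-1)\bigr)\ \geq\ h^{0}\bigl(N_{X_{j+1}/\mathbb{P}^{M+j+1}}(-1)\bigr)-h^{0}\bigl(N_{X_{j+1}/\mathbb{P}^{M+j+1}}(-2)\bigr)-h^{1}(\mathcal{O}_{X_{j}}),
\]
which rearranges to $\alpha(X_{j})\ge\alpha(X_{j+1})+1$ once the right-hand correction terms are killed. Summing over $j$ and using $\alpha(W)\ge 0$ (cones over $W$) would give $\alpha(X)\ge k$, i.e. non-$k$-extendability whenever $\alpha(X)\le k-1$.

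The main obstacle, and the reason the two hypotheses appear, is controlling the correction $h^{0}(N_{X_{j+1}/\mathbb{P}^{M+j+1}}(-2))$ — the \emph{second-order} part of the extension module — together with the $h^{1}(\mathcal{O}_{X_{j}})$ term. Under hypothesis (2), $H^{0}(N_{X/\mathbb{P}^{M}}(-2))=0$, I would show this vanishing propagates up the flag so that every $h^{0}(N_{X_{j+1}}(-2))=0$, making the step inequality clean. Under hypothesis (1), $\alpha(X)<M$, the second-order term need not vanish and the real work is a dimension estimate: one must bound the second-order contribution against $M$ and use $\alpha(X)<M$ to force the prospective non-conical directions to remain independent as one descends the tower, so that none is reabsorbed into cones. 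I expect this bookkeeping — best organized as injectivity/vanishing statements for the linear-form multiplication maps on the graded module $\bigoplus_{t}H^{0}(N_{X/\mathbb{P}^{M}}(t))$, and the handling of the $h^{1}(\mathcal{O})$ terms that are harmless in the $K3$/Fano/Mukai applications — to be the technical core of the Zak--L'vovsky argument.
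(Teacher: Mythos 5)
The paper does not prove this statement: Theorem \ref{ZL} is imported verbatim from Zak and L'vovsky (\cite{Zak91}, \cite{Lvo92}) and used as a black box, so there is no internal proof to compare your argument against. Your proposal therefore has to stand on its own as a proof of the Zak--L'vovsky theorem, and as written it does not: it correctly reproduces the standard architecture (the $(M+1)$-dimensional ``cone subspace'' of $H^0(N_{X/\mathbb{P}^M}(-1))$, the tower of linear sections $X_j = W\cap\mathbb{P}^{M+j}$, and the step inequality $\alpha(X_j)\ge\alpha(X_{j+1})+1$), but every load-bearing step is deferred with ``I would show'' or ``I expect''.

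Concretely: (a) the base case is asserted, not proved --- the claim that a non-conical extension necessarily contributes a class of $H^0(N_{X/\mathbb{P}^M}(-1))$ outside the cone subspace \emph{is} Zak's theorem, and proving it requires relating extensions to deformations of the cone over $X$ and analyzing when such deformations are themselves cones (this is where non-quadricity is genuinely spent); arguing ``by contradiction'' as you do is circular. (b) Your displayed inequality carries the correction terms $h^0(N_{X_{j+1}/\mathbb{P}^{M+j+1}}(-2))$ \emph{and} $h^1(\mathcal{O}_{X_j})$; the theorem is stated for arbitrary smooth $X$, and neither hypothesis (1) nor (2) controls $h^1(\mathcal{O}_{X_j})$, so the crude three-sequence section count cannot yield the step inequality in the stated generality --- L'vovsky's actual argument analyzes which sections lift rather than counting dimensions. (c) The propagation of $H^0(N(-2))=0$ up the tower is itself a nontrivial lemma (the restriction maps go the wrong way for it to be automatic), and under hypothesis (1) you explicitly offer no argument at all for the step that makes the theorem true; that dimension estimate is the technical heart of \cite{Lvo92}, not bookkeeping. (d) A smaller but real point: Bertini does not immediately give smoothness of the intermediate sections $X_j$ for $0<j<k$, because the admissible flags are constrained to contain $\mathbb{P}^M$ and $W$ is only assumed smooth along $X=X_0$, so $\operatorname{Sing}(W)$ may be forced to meet $\mathbb{P}^{M+j}$. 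Since the paper only ever invokes this theorem as a citation, the appropriate resolution is to cite \cite{Zak91}, \cite{Lvo92} (and, for the variant with hypothesis (2), the references in \cite{Lop23}) rather than to reprove it.
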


By semicontinuity it is enough to give sharp upper bounds to $\alpha(X)$ by computing it over an embedded degeneration of $X$. We show that $\alpha(\widetilde{Y})$ for ribbons $\widetilde{Y} \subset \mathbb{P}^M$ called $K3$ carpets (see \Cref{ribbon}) on Hirzebruch surfaces $Y \subset \mathbb{P}^M$ embedded by the linear series \textcolor{black}{$|aC_0+bf|$ for suitably chosen values of $a$ and $b$}, achieve sharp upper bounds. Such $K3$ carpets were shown to be smoothable in \cite{BMR21} (see also \cite{GP97}).  \textcolor{black}{When $\operatorname{gcd}(a,b) > 1$, the resulting smooth fibers are non-prime $K3$ surfaces while if $\operatorname{gcd}(a,b) = 1$, the resulting smooth fibers are prime $K3$ surfaces (see \Cref{degeneration into $K3$ carpets}).} The computability of $\alpha(\widetilde{Y})$ is facilitated by the fact that $\widetilde{Y} \subset \mathbb{P}^M$ is an embedded ribbon which is a local complete intersection scheme whose reduced part is a (possibly degenerate) Hirzebruch surface $Y \subset \mathbb{P}^M$. %\sout{thickening of a (possibly degenerate) Hirzebruch surface $Y \subset \mathbb{P}^M$, where the embedding is induced by the complete linear series of a very ample line bundle $aC_0+bf$ followed by a linear embedding of projective spaces.}
We reduce the computations involving the cohomology of twists of the normal bundle $N_{\widetilde{Y}/\mathbb{P}^M}$ to computing cohomology of twists of $N_{Y/\mathbb{P}^M}$ by a series of cohomological procedures and analysis of the related coboundary maps. \par

\textcolor{black}{We now state our results. Let $\mathcal{H}_{r,g}$ denote the Hilbert scheme of $K3$ surfaces of index $r$ and genus $g$. (see \Cref{Hilbert schemes of K3 surfaces})}
For \textcolor{black}{general} $K3$ surfaces of large genus, we show:

\begin{theorem}(see \Cref{non-extendability of K3 surfaces}, \Cref{H^0(N(-2))})\label{non-extendability of K3 surfaces intro}
    Let \textcolor{black}{$X \subset \mathbb{P}^{M}$ be a general $K3$ surface in $\mathcal{H}_{r,g}$ embedded by the complete linear series of $rB$, where $B$ is a primitive very ample line bundle, with $B^2 = 2g-2$ and $M = 1+r^2(g-1)$.}
    
    \begin{enumerate}
        \item If $r \geq 5, g \geq 3$ or $r = 4, g \geq 4$ or $r = 3, g \geq 5$ or $r = 2$, $g \geq 7$, then $\alpha(X) = 0$ and consequently $X \subset \mathbb{P}^M$ is not extendable. 

       % \item[(2)] If $r = 1$, $g = 4k+1, k \geq 5$ or $r = 1$, $g = 18k+4, k \geq 1$, or $r = 1$, $g = 18k+7, k \geq 2$ or $r = 1$, $g = 18k+13, k \geq 1$ or $r = 1$, $g = 18k+16, k \geq 1$, then $\alpha(X) = 0$ and consequently $X \subset \mathbb{P}^M$ is not extendable. 
       
        \item If $r = 1$ and $g$ is either $g = 4k+1, k \geq 5$ or $g = 18k+4, k \geq 1$, or $g = 18k+7, k \geq 2$ or $g = 18k+13, k \geq 1$ or $g = 18k+16, k \geq 1$, then $\alpha(X) = 0$ and consequently $X \subset \mathbb{P}^M$ is not extendable.
        
        \item[(3)] If $r = 2, g \geq 4$ or $r \geq 3, g \geq 3$, then $H^0(N_{X/\mathbb{P}^M}(-2)) = 0$. Further, if $(r = 2, g = 3)$, then $h^0(N_{X/\mathbb{P}^M}(-2)) \leq 1$.

    \end{enumerate}
\end{theorem}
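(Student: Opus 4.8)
The plan is to transfer the computation from the general $K3$ surface to an embedded $K3$ carpet by semicontinuity, and then to carry the carpet computation out entirely on its reduced Hirzebruch surface. I would begin by recording the universal lower bound: for a smooth nondegenerate $K3$ surface $X \subset \mathbb{P}^M$, twisting the restricted Euler sequence of $\mathbb{P}^M$ and the normal bundle sequence $0 \to T_X \to T_{\mathbb{P}^M}|_X \to N_{X/\mathbb{P}^M} \to 0$ by $\mathcal{O}_X(-1)$ and using $h^0(\mathcal{O}_X(-1)) = h^0(T_X(-1)) = 0$ gives $h^0(N_{X/\mathbb{P}^M}(-1)) \geq M+1$, i.e. $\alpha(X) \geq 0$. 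Thus it suffices to produce the matching upper bounds. By \Cref{degeneration into $K3$ carpets}, the general member of $\mathcal{H}_{r,g}$ degenerates inside a flat family of subschemes of $\mathbb{P}^M$ to a $K3$ carpet $\widetilde{Y}$ on a Hirzebruch surface $Y$ embedded by $|aC_0 + bf|$, with $(aC_0+bf)^2 = r^2(2g-2)$ and $\gcd(a,b) = r$. Upper semicontinuity of $h^0$ applied to the (twisted) relative normal sheaf of this family, both of whose fibres are local complete intersections, yields $h^0(N_{X/\mathbb{P}^M}(-j)) \leq h^0(N_{\widetilde{Y}/\mathbb{P}^M}(-j))$ for $j = 1,2$ (as in the framework of \cite{BM24}). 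Hence the theorem reduces to proving $h^0(N_{\widetilde{Y}/\mathbb{P}^M}(-1)) \leq M+1$ in the ranges of (1) and (2), and $h^0(N_{\widetilde{Y}/\mathbb{P}^M}(-2)) = 0$ (resp. $\leq 1$) in the ranges of (3); the non-extendability assertions then follow from \Cref{ZL}.

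Next I would reduce the carpet cohomology to line-bundle cohomology on $Y \cong \mathbb{F}_e$. As a ribbon, $\widetilde{Y}$ carries the conormal line bundle $\mathcal{E} = \mathcal{I}_{Y/\widetilde{Y}}$ on $Y$, and the $K3$ condition $\omega_{\widetilde{Y}} \cong \mathcal{O}_{\widetilde{Y}}$ forces $\mathcal{E} \cong \omega_Y$ via the ribbon adjunction formula $\omega_{\widetilde{Y}}|_Y \cong \omega_Y \otimes \mathcal{E}^{-1}$ (see \Cref{ribbon}). Since $\widetilde{Y}$ is a local complete intersection in $\mathbb{P}^M$, tensoring $0 \to \mathcal{E} \to \mathcal{O}_{\widetilde{Y}} \to \mathcal{O}_Y \to 0$ by the locally free sheaf $N_{\widetilde{Y}/\mathbb{P}^M}$ gives
\[
0 \to N_{\widetilde{Y}/\mathbb{P}^M}\big|_Y \otimes \omega_Y \to N_{\widetilde{Y}/\mathbb{P}^M} \to N_{\widetilde{Y}/\mathbb{P}^M}\big|_Y \to 0 ,
\]
while the conormal sequences of the flag $Y \subset \widetilde{Y} \subset \mathbb{P}^M$ exhibit $N_{\widetilde{Y}/\mathbb{P}^M}|_Y$ as a two-step extension whose graded pieces are the cokernel of the injection $\omega_Y^{-1} \hookrightarrow N_{Y/\mathbb{P}^M}$ and a line bundle built from $\mathcal{E} = \omega_Y$. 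Twisting these by $\mathcal{O}_Y(-j)$ and running the long exact cohomology sequences, every term is reduced to groups of the form $H^i(N_{Y/\mathbb{P}^M} \otimes \mathcal{L})$ and $H^i(\mathcal{L})$ for explicit line bundles $\mathcal{L}$ on $\mathbb{F}_e$. The former are themselves computed from the restricted Euler sequence $0 \to \mathcal{O}_Y \to \mathcal{O}_Y(1)^{\oplus(M+1)} \to T_{\mathbb{P}^M}|_Y \to 0$ together with $0 \to T_Y \to T_{\mathbb{P}^M}|_Y \to N_{Y/\mathbb{P}^M} \to 0$, so that in the end everything is expressed through the cohomology of line bundles $\mathcal{O}_{\mathbb{F}_e}(p C_0 + q f)$, which is completely explicit.

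The heart of the matter, and the main obstacle, is the precise determination of the connecting homomorphisms produced by the long exact sequences above -- for instance the coboundary $H^0(N_{\widetilde{Y}/\mathbb{P}^M}|_Y(-1)) \to H^1(N_{\widetilde{Y}/\mathbb{P}^M}|_Y \otimes \omega_Y(-1))$ of the first displayed sequence, and the composite map $N_{Y/\mathbb{P}^M} \to N_{\widetilde{Y}/\mathbb{P}^M}|_Y$ implicit in the flag sequences. These carry exactly the information that the classical method of \cite{CLM94}, \cite{CLM98} extracts from the corank of the Gaussian (Wahl) map of the canonical curve section; the point of the present approach is to control them directly on $Y$, without ever passing to a hyperplane section. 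Concretely I would identify each coboundary with a natural multiplication-of-sections map among the groups $H^0(\mathcal{O}_{\mathbb{F}_e}(p C_0 + q f))$ and decide its injectivity or surjectivity using the product structure of the bigraded section ring of $\mathbb{F}_e$ and standard vanishing. The delicate arithmetic of part (2), where $r = 1$, mirrors precisely when the governing multiplication map fails surjectivity by the least possible amount: the congruence classes listed ($g = 4k+1$ with $k \geq 5$, and $g \equiv 4, 7, 13, 16 \pmod{18}$) isolate exactly the genera for which the bound $M+1$ is attained, so the crux is to compute the corank of this map as a function of $(a,b,e)$ and to verify that it vanishes in those ranges and no others.

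Part (3) is comparatively lighter. Replacing the twist $\mathcal{O}(-1)$ by $\mathcal{O}(-2)$ pushes every contributing line bundle $\mathcal{O}_{\mathbb{F}_e}(p C_0 + q f)$ into a range with no sections once $r \geq 2,\ g \geq 4$ or $r \geq 3,\ g \geq 3$, so the two displayed sequences force $H^0(N_{\widetilde{Y}/\mathbb{P}^M}(-2)) = 0$ with no coboundary analysis needed; in the single borderline case $(r,g) = (2,3)$ exactly one line bundle survives with a one-dimensional space of sections, giving $h^0 \leq 1$. Feeding these bounds, together with the lower bound of the first paragraph, into \Cref{ZL} yields all three parts.
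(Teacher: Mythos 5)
Your overall strategy --- degenerate to a $K3$ carpet on an embedded Hirzebruch surface, restrict the normal bundle through the three ribbon exact sequences, and reduce everything to line bundles on $\mathbb{F}_e$ --- is exactly the paper's (\Cref{ZL for K3 carpets}, \Cref{calculating beta}, \Cref{non-extendability of K3 surfaces}, \Cref{H^0(N(-2))}), and your treatment of part (3) and of the lower bound $\alpha(X)\geq 0$ is fine. The gap is in how you propose to control the connecting maps. For $a\geq 3$ no coboundary analysis is needed at all: the paper's quantity $\beta$ is just the sum of the crude upper bounds from the three sequences, and it already vanishes by Lemmas \ref{-H-K_Y}--\ref{T_Y(-H)}. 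The genuinely delicate case is $a=2$ --- which carries all of $r=2$, $g\geq 7$ \emph{and} the $g=4k+1$ subcase of $r=1$ --- because there $h^1(T_Y(-H))\neq 0$ and the crude bound overshoots. The paper's fix (\Cref{splits}) is \emph{not} a statement about multiplication maps in the bigraded section ring of $\mathbb{F}_e$: it is the assertion that for a \emph{general} ribbon the pushforward to $\mathbb{P}^1$ of $0\to -H-K_Y\to N_{Y/\mathbb{P}^M}(-H)\to \mathcal{H}om(I_{\widetilde{Y}}/I_{Y}^2,\mathcal{O}_Y)(-H)\to 0$ splits, so that $h^0(\mathcal{H}om(I_{\widetilde{Y}}/I_{Y}^2,\mathcal{O}_Y)(-H))=h^0(N_{Y/\mathbb{P}^M}(-H))-h^0(-H-K_Y)$. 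The relevant coboundary is governed by the injection $-K_Y\hookrightarrow N_{Y/\mathbb{P}^M}$ defining the ribbon, i.e.\ by the chosen section of $N_{Y/\mathbb{P}^M}\otimes K_Y$, and the proof exploits the freedom to choose that section generically (ultimately via $h^1(T_Y\otimes K_Y)=2$ against $h^1(T_{\mathbb{P}^M}|_Y\otimes K_Y)=1$). Reading the coboundaries off the product structure of the section ring sees only the Euler- and tangent-sequence connecting maps, which are independent of the ribbon; it cannot deliver the surjectivity of $H^0(N_{Y/\mathbb{P}^M}(-H))\to H^0(\mathcal{H}om(I_{\widetilde{Y}}/I_{Y}^2,\mathcal{O}_Y)(-H))$ that the $a=2$ case requires, and without it your upper bound exceeds $M+1$ by $h^1(-H-K_Y)\neq 0$.

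Two smaller points. The polarization on the reduced part satisfies $(aC_0+bf)^2=r^2(g-1)$, not $r^2(2g-2)$: the carpet is a double structure, so its degree is $2H^2$; this matters when matching $(a,b,e)$ to $(r,g)$. And the congruence classes in part (2) are not ``exactly the genera for which the bound is attained'': they are simply the genera realized by carpets with $a\in\{2,3\}$, $\gcd(a,b)=1$, $e\in\{0,1\}$ and $b$ in the range where $\beta$ or $\gamma$ vanishes. The method is silent about the remaining values of $g$ (the paper defers them to later work), so there is no ``and no others'' assertion to prove, and trying to establish one would send you after a statement that is neither needed nor available.
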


 \par

\textcolor{black}{The above theorem completely recovers optimal results in the case of non-prime $K3$ surfaces, while it recovers results for infinitely many values of $g$ in the prime case. We intend to deal with the case $r = 1$ and all possible values of $g$ more systematically in a forthcoming article. }

\vspace{0.2cm}

\color{black}

%\sout{One of the consequences of \Cref{non-extendability of K3 surfaces intro}, $(2)$, is the non-surjectivity of the Wahl map $\Phi_{\omega_C}$ in the non-prime case.}

%\begin{corollary}\label{Wahl intro}(\Cref{Wahl})
% \sout{Let $X$ be a $K3$ surface and $C \subset X$ is a smooth curve with $C \in |rB|$ with $r \geq 2$. Then the Wahl map $\Phi_{\omega_C}$ is not surjective.}
%\end{corollary}

%\sout{The result was first proved by Wahl (\cite{W87}) and later by Beauville-Merindol (\cite{BM87}) and Ciliberto-Miranda (\cite{CM90}) (see also \cite{V92}).}

\color{black}

For non-prime $K3$ surfaces of lower genus, we show 

\begin{theorem}(see \Cref{ZL for lower genus $K3$ surfaces})\label{ZL for lower genus $K3$ surfaces intro}

Let \textcolor{black}{$X \subset \mathbb{P}^{M}$ be a general $K3$ surface in $\mathcal{H}_{r,g}$ embedded by the complete linear series of $rB$, where $B$ is a primitive very ample line bundle, with $B^2 = 2g-2$ and $M = 1+r^2(g-1)$.}

    \begin{enumerate}
       % \item \textcolor{blue}{If $r = 2$, $g = 2$, then $\alpha(X) = 14$} 
        \item If $r = 2$, $g = 3$, then $\alpha(X) = 10$ 
        %(\textcolor{blue}{Needs improvement by $1$.})
        \item If $r = 2$, $g = 4$, then $\alpha(X) = 6$ 
        \item If $r = 2$, $g = 5$, then $\alpha(X) = 3$ 
        \item If $r = 2$, $g = 6$, then $\alpha(X) = 1$ 
       % \item If $r = 3$, $g = 2$, then $\alpha(X) = 10$ 
        \item If $r = 3$, $g = 3$, then $\alpha(X) \leq 4$
        \item If $r = 3$, $g = 4$, then $\alpha(X) = 1$ 
        \item If $r = 4$, $g = 3$, then $\alpha(X) = 1$

    \end{enumerate}
    
\end{theorem}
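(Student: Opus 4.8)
The plan is to prove each of the seven numerical assertions by the uniform strategy laid out in the introduction: degenerate the general $K3$ surface $X \subset \mathbb{P}^M$ in $\mathcal{H}_{r,g}$ to an embedded $K3$ carpet $\widetilde{Y}$ supported on a Hirzebruch surface $Y = \mathbb{F}_e$, and read off $\alpha$ from the normal bundle of the carpet. First I would fix, for each pair $(r,g)$ in the list, a triple $(a,b,e)$ with $\operatorname{gcd}(a,b) = r$ such that the image of $\mathbb{F}_e$ under $|aC_0+bf|$ spans $\mathbb{P}^M$ with $M = 1+r^2(g-1)$, and such that the associated $K3$ carpet $\widetilde{Y}$ occurs as a flat limit of $X$ inside this $\mathbb{P}^M$ (such degenerations exist by the smoothability of $K3$ carpets). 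By upper semicontinuity of $h^0$ in the flat family one has $h^0(N_{X/\mathbb{P}^M}(-1)) \le h^0(N_{\widetilde{Y}/\mathbb{P}^M}(-1))$, so it suffices to compute $\alpha(\widetilde{Y}) = h^0(N_{\widetilde{Y}/\mathbb{P}^M}(-1)) - M - 1$ in order to obtain the stated upper bounds.

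The computational core is the reduction of the carpet normal bundle to that of its reduced support. Since $\widetilde{Y}$ is a local complete intersection ribbon on $Y$ whose conormal data is governed by a line bundle, there is a short exact sequence (equivalently a two-step filtration) expressing $N_{\widetilde{Y}/\mathbb{P}^M}$ in terms of $N_{Y/\mathbb{P}^M}$ and a twist of it by the ribbon's conormal line bundle. Twisting by $\mathcal{O}(-1)$ and passing to cohomology, I would reduce $h^0(N_{\widetilde{Y}/\mathbb{P}^M}(-1))$ to the cohomology of suitable twists of $N_{Y/\mathbb{P}^M}$, once the connecting homomorphisms are pinned down. The normal bundle $N_{Y/\mathbb{P}^M}$ of the embedded Hirzebruch surface is then handled via the conormal and Euler sequences, so that every term ultimately becomes the cohomology of an explicit line bundle $\mathcal{O}_{\mathbb{F}_e}(mC_0 + nf)$, which is computed directly. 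Summing the contributions yields $\alpha(\widetilde{Y})$, hence the upper bounds $\alpha(X) \le 10,\,6,\,3,\,1,\,4,\,1,\,1$ in the respective cases.

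To upgrade the inequalities to the equalities in (1)--(4), (6) and (7), I would supply matching lower bounds for $\alpha(X)$ by exhibiting genuine extensions of the embedded surface. Because these are $K3$ surfaces of small genus re-embedded by the pluri-polarization $rB$, they occur as hyperplane sections of classically understood Fano threefolds and Mukai varieties; counting the independent first-order extension directions produced by such families gives $\alpha(X) \ge$ the claimed value, and combined with the carpet upper bound this forces equality. Equivalently, one may argue that $h^0(N_{X/\mathbb{P}^M}(-1))$ does not jump in the flat family by checking that the relevant $h^1$ is constant, which determines $h^0(N_{X/\mathbb{P}^M}(-1))$ and yields equality directly. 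The numerical output can then be fed into \Cref{ZL} to read off the precise extendability.

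The main obstacle is twofold. On the computational side it is the precise bookkeeping of the connecting maps in the carpet-to-surface reduction: a crude estimate only produces inequalities, and extracting the exact integers requires showing that the relevant coboundary maps have their expected ranks. On the geometric side it is the lower bound: for $r = 3,\, g = 3$ the available extensions do not provably account for the full carpet bound, which is exactly why that case is recorded as the inequality $\alpha(X) \le 4$ rather than an equality, whereas in the remaining cases the extension count meets the carpet bound and closes the gap.
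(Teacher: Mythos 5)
Your proposal follows the paper's strategy: the upper bounds $\alpha(X)\le 10,6,3,1,4,1,1$ are obtained exactly as in the paper's Theorem~\ref{ZL for lower genus $K3$ surfaces}, by degenerating $X$ to a $K3$ carpet on $\mathbb{F}_e$ embedded by $rC_0+rmf$ (with $e\in\{0,1\}$ according to the parity of $g$), running the three exact sequences that filter $N_{\widetilde{Y}/\mathbb{P}^M}$ through $N_{Y/\mathbb{P}^M}$, and reducing to line-bundle cohomology on $\mathbb{F}_e$ --- including the need to pin down a coboundary/splitting for $a=2$, which is the paper's Claim~\ref{splits}. Your numbers agree with the paper's (the sole contribution in each case is $h^0(-H-2K_Y)$, plus the extra $+1$ from $h^1(p^*T_{\mathbb{P}^1}(-H))$ when $(r,g)=(2,3)$).

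Where you are looser is the upgrade to equalities. The paper does not count ``first-order extension directions'' of $X$ directly; it degenerates the projectively Cohen--Macaulay Fano threefold $V$ to the cone $C(X)$ over its hyperplane section, so that
\[
\sum_{k\ge 0} h^0\bigl(N_{X/\mathbb{P}^M}(-k)\bigr)=\dim T_{C(X)}\mathcal{V}_{r,g,1}\ \ge\ h^0\bigl(N_{V/\mathbb{P}^{M+1}}\bigr),
\]
and then reads off $h^0(N_V)$ from the Iskovskih--Mori--Mukai tables. To extract $\alpha(X)$ from this sum one must also kill the higher twists; this is the separate Proposition~\ref{H^0(N(-2))} ($h^0(N_{X}(-2))=0$, resp.\ $\le 1$ when $(r,g)=(2,3)$), a step your sketch omits but which is essential for the count to close. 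Finally, your explanation of why $(r,g)=(3,3)$ survives only as an inequality is not quite right: it is not that the known extensions fall short of the carpet bound, but that $\mathcal{V}_{3,3,1}=\varnothing$ outright (the degree $d=2(g-1)/r=4/3$ is not an integer), so there is no extension from which to harvest any lower bound at all.
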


One of the key technical results proved in \cite{CLM98}, that crucially depends on the corank of the Wahl map, concerns the dimension of the component of the Hilbert scheme at points representing cones over canonical curves. Using \Cref{non-extendability of K3 surfaces intro} $(2)$ and the bounds of $\alpha(X)$ in \Cref{ZL for lower genus $K3$ surfaces intro}, we prove this result directly for the cone over a $K3$ surface. \par
%\sout{without going to Gaussian maps on canonical curves}. 
The following theorem, which is a consequence of our results and a general principle outlined in \cite{CLM98}, gives a classification of non-prime Fano-threefolds and more generally non-prime Mukai varieties of Picard rank one 
 
\begin{theorem}(see \Cref{classification of fanos with index two}, \Cref{classification of Mukai varieties with index two})\label{classification of mukai varieties intro}
\begin{enumerate}
    \item Let \textcolor{black}{$\mathcal{V}_{r,g,1}$ denote the closure of the locus of points representing smooth Fano threefolds with Picard rank one in the Hilbert scheme of smooth anticanonically embedded Fano threefolds of index $r$ and genus $g$. Let $r \geq 2$ and $g \geq 3$ (see \Cref{Hilbert scheme of Fanos and K3})}.
    Then

\begin{enumerate}
    \item $\mathcal{V}_{r,g,1} = \varnothing$ if $(r = 2, g \geq 7)$, $(r = 3, g = 3)$, $(r = 3, g \geq 5)$, $(r = 4, g \geq 4)$ or $(r \geq 5, g \geq 3)$. 
    \item For $(r = 2, 3 \leq g \leq 6), (r = 3, g = 4), (r = 4, g = 3)$, $\mathcal{V}_{r,g,1}$ is irreducible, and the families of Fano and Iskovskih form an open dense irreducible subset of $\mathcal{V}_{r,g,1}$. The general fiber of the projection map $p: \mathcal{F}_{r,g} \to \mathcal{H}_{r,g}$ as defined in \Cref{Hilbert scheme of Fanos and K3} is irreducible. Upto projective transformations, a general $K3$ surface in $\mathcal{H}_{2,6}, \mathcal{H}_{3,4}$ or $ \mathcal{H}_{4,3}$ is contained in a unique Fano threefold in $\mathcal{V}_{2,6,1}, \mathcal{V}_{3,4,1}$ or $ \mathcal{V}_{4,3,1}$ respectively.
\end{enumerate}

\item \textcolor{black}{For $n \geq 4$, let $\mathcal{V}_{n,r,g,1}$ denote the closure of the locus of smooth Fano $n-$ folds of Picard rank one in the Hilbert scheme of smooth embedded Fano varieties of dimension $n \geq 4$, index $r(n-2)$ and genus $g$ with a canonical curve section. (see \Cref{Mukai varieties}). Let $r \geq 2$ and $g \geq 3$}.
Then 
\begin{enumerate}
    \item $\mathcal{V}_{n,r,g,1} = \varnothing$ if $(r = 2, 3 \leq g \leq 4)$, $(n \geq 6, r = 2, g = 5)$, $(r = 2, g \geq 6)$, $(r = 3, g \geq 3)$, or $(r = 4, g \geq 3)$ . 
    \item $\mathcal{V}_{4,2,5,1}$ and $\mathcal{V}_{5,2,5,1}$ are irreducible, and the families of Mukai form an open dense irreducible subset of the components.
\end{enumerate}

\end{enumerate} 

\end{theorem}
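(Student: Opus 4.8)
The plan is to translate the whole classification into statements about the extendability of a general K3 surface $X \in \mathcal{H}_{r,g}$ and then substitute the values of $\alpha(X)$ and $h^0(N_{X/\mathbb{P}^M}(-2))$ computed in \Cref{non-extendability of K3 surfaces intro} and \Cref{ZL for lower genus $K3$ surfaces intro}. First I would fix the dictionary. A Picard rank one Fano $n$-fold $W$ of index $r(n-2)$ and genus $g$ with a canonical curve section carries a polarization $H$ with $-K_W = (n-2)H$ and $H = rB$ for $B$ primitive, so that it is embedded by $|rB|$ and a general codimension $(n-2)$ linear section is a K3 surface in $\mathcal{H}_{r,g}$; equivalently $W$ is an $(n-2)$-extension of $X$ in the sense of \Cref{extendability}, with the threefolds of part (1) the case $n=3$ (where $-K_W = rB$ and the embedding is anticanonical). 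Hence $\mathcal{V}_{n,r,g,1} \neq \varnothing$ forces $X$ to be $(n-2)$-extendable, while the classical families of Fano, Iskovskih and Mukai supply the extensions in the non-empty range.

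For the emptiness statements I would argue on two fronts. Whenever $\alpha(X) = 0$, which by \Cref{non-extendability of K3 surfaces intro}(1) is exactly the ranges $(r=2,g\geq7)$, $(r=3,g\geq5)$, $(r=4,g\geq4)$ and $(r\geq5,g\geq3)$, \Cref{ZL} shows $X$ is not even $1$-extendable, so no $W$ exists in any dimension and the corresponding $\mathcal{V}$ vanish with no appeal to the Fano classification. The remaining emptiness --- the threefold case $(r=3,g=3)$, where only $\alpha(X)\leq 4$ is available, and essentially all of part (2a) --- I would instead force numerically: for $n \geq 4$ one has index $r(n-2) \geq 2(n-2) \geq n$, and for $n=3$, $r\geq 3$ one has index $=r \geq n$, so the Kobayashi--Ochiai theorem makes $W$ a projective space or a quadric. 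The index equation $r(n-2) \in \{n, n+1\}$ then has only the solutions giving $\mathbb{P}^3$ (genus $3$), $Q^3$ (genus $4$), $Q^4$ and $\mathbb{P}^5$ (both genus $5$), so every other pair is empty; in particular $r(n-2) > n+1$ already kills $n \geq 6$ for $r=2$ and all $n \geq 4$ for $r \geq 3$, and the genus of $Q^3$ being $4 \neq 3$ kills $(r=3,g=3)$.

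For non-emptiness and irreducibility in (1b) and (2b) the plan is to run the general principle of \cite{CLM98} through the projection $p \colon \mathcal{F}_{r,g}\to\mathcal{H}_{r,g}$. Since $\mathcal{H}_{r,g}$ is irreducible, it suffices to show the general fibre of $p$, namely the space of extensions of a fixed general $X$, is irreducible of the expected dimension; the essential input is the vanishing $h^0(N_{X/\mathbb{P}^M}(-2))=0$ from \Cref{non-extendability of K3 surfaces intro}(3) together with the value of $\alpha(X)$, which makes the deformation theory of the cone over $X$ unobstructed so that the extension space is smooth of dimension governed by $\alpha(X)$. Irreducibility of $\mathcal{F}_{r,g}$ follows, hence of its image $\mathcal{V}_{r,g,1}$, and a dimension count against the explicit Fano/Iskovskih/Mukai families shows these are open and dense. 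When $\alpha(X)=1$ --- that is $(r,g)\in\{(2,6),(3,4),(4,3)\}$ --- the general fibre is a single point, yielding the uniqueness of the Fano threefold through a general K3; the higher components $\mathcal{V}_{4,2,5,1}$ and $\mathcal{V}_{5,2,5,1}$ are handled in the same way, their members being $Q^4$ and $\mathbb{P}^5$ re-embedded by $|2B|$.

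The step I expect to be hardest is precisely this irreducibility of the general fibre of $p$: bounding $\alpha(X)$ rules out spurious high-dimensional extensions, but to obtain a single irreducible component of the expected dimension one must control the full local structure of the Hilbert scheme at the cone, which is where $h^0(N_{X/\mathbb{P}^M}(-2))=0$ must do the work of annihilating the obstructions and excluding embedded components. The case $(r=2,g=3)$ is the most delicate here, since $\alpha(X)=10$ exceeds $M=9$ and only $h^0(N_{X/\mathbb{P}^M}(-2))\leq 1$ is known, so neither hypothesis of \Cref{ZL} is cleanly available and the irreducibility must be matched directly against the explicit family of degree two del Pezzo threefolds; the borderline $(r=3,g=3)$ is the other subtle point, as emptiness there cannot be seen from extendability of the K3 and has to be read off from the index and genus bookkeeping above.
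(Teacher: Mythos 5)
Your proposal is correct, and for the irreducibility statements (1b) and (2b) it follows essentially the paper's route: degenerate a projectively Cohen--Macaulay extension to the cone $C(X)$ over a general $X\in\mathcal{H}_{r,g}$, bound $\dim T_{C(X)}=\sum_{k\ge 0}h^0(N_{X/\mathbb{P}^M}(-k))$ using $h^0(N_{X/\mathbb{P}^M}(-2))=0$ (resp.\ $\le 1$ together with $h^0(N_{X/\mathbb{P}^M}(-3))=0$ when $(r,g)=(2,3)$) from \Cref{H^0(N(-2))} and the $\alpha$-bounds of \Cref{ZL for lower genus $K3$ surfaces}, then match against $h^0(N_{V/\mathbb{P}^{M+1}})$ for the explicit Fano--Iskovskih--Mukai members, which are smooth points by Kodaira--Nakano; the fibre irreducibility and the uniqueness for $\alpha(X)=1$ are read off exactly as you describe. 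Where you genuinely diverge is in the emptiness statements not covered by $\alpha(X)=0$: you dispose of $(r=3,g=3)$ and of all of part (2a) uniformly by noting that for Picard rank one the index is a multiple $r(n-2)k\ge n$ in each such case, so Kobayashi--Ochiai forces $W\in\{\mathbb{P}^n,Q^n\}$ and the bookkeeping $r(n-2)k\in\{n,n+1\}$ leaves only $(Q^3,g=4)$, $(\mathbb{P}^3,g=3)$, $(Q^4,g=5)$, $(\mathbb{P}^5,g=5)$. The paper instead uses a Riemann--Roch divisibility argument ($d=2(g-1)/r\notin\mathbb{Z}$) for $(3,3)$, the Zak--L'vovsky non-$k$-extendability criterion with the bounds $\alpha(X)\le 1$ resp.\ $\le 3$ for $(2,6)$, $(3,4)$, $(4,3)$ and $(n\ge 6, r=2, g=5)$, and cites adjunction theory for $(r=2,g\in\{3,4\},n\ge 4)$. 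Both routes are valid and non-circular (Kobayashi--Ochiai is independent of the Iskovskih--Mori--Mukai classification being reproved); yours is shorter and uniform, while the paper's keeps more of the argument inside its extendability framework and actually exercises the computed $\alpha$-bounds. Two small points to tighten: $\mathcal{V}_{r,g,1}$ is the image of $\mathcal{F}_{r,g}$ under the projection to the Fano factor, not under $p$, and deducing irreducibility of $\mathcal{F}_{r,g}$ from that of $\mathcal{H}_{r,g}$ and of the general fibre of $p$ tacitly requires that every component of $\mathcal{F}_{r,g}$ dominate $\mathcal{H}_{r,g}$ --- this is exactly \Cref{hyperplane section of a general fano is a general K3} and should be invoked explicitly (the paper sidesteps this by placing the cone in every component of $\mathcal{V}_{r,g,1}$ directly via the pCM degeneration).
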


Finally in \Cref{Hirzebruch main}, we show that the $K3$ carpets $\widetilde{Y} \subset \mathbb{P}^M$ %\sout{extending the possibly degenerate embeddings $Y \subset \mathbb{P}^M$, where  $Y$ is a} 
on a Hirzebruch surface $Y=\mathbb{F}_e$, $0 \leq e \leq 1$, \textcolor{black}{embedded by arbitrary linear series} 
%\sout{and the embedding is induced by the complete linear series of a very ample line bundle $aC_0+bf$ followed by a linear embedding of projective spaces,} 
lie in the closure of simple normal crossing varieties of the form $V = Y_1 \bigcup_E Y_2 \subset \mathbb{P}^M$, where each $Y_i \cong Y$ is embedded by $aC_0+bf$ and $E$ is an anticanonical elliptic curve.  The relation of these embedded type two degenerations of $K3$ surfaces \textcolor{black}{to the present context is} as follows: In \cite{CLM93}, to deal with the prime case, the authors degenerated $K3$ surfaces into a union of scrolls (this is the case $a=1$ above.) These were further degenerated to a union of planes whose hyperplane sections are graph curves with corank one Gaussian map. \textcolor{black}{The techniques introduced in this article show that in the non-prime case, degenerating the $K3$ surfaces to union of Hirzeburch surfaces embedded by arbitrary linear series, and further degenerating to $K3$ carpets, suffices to prove the results.} %\sout{requiring no further degeneration to planes and taking hyperplace curve sections or study Gaussian maps.} 
%\sout{present context, to deal with the non-prime case, we essentially, degenerate $K3$ surfaces to a union of Hirzebruch surfaces as non scrolls meeting along an elliptic curve, but instead of further degenerating to union of planes, we degenerate it to a $K3$ carpet.} 
%Comment:\textcolor{blue}{In fact I was thinking about rewriting this entire paragraph before we put it in the arxiv. We should not get into this mess as it was done here before...it would confuse the reader endlessly. But I feel the above makes it clearer. What do you think?} 

%On the other hand, J. Wahl, introduced the following map which is now called a Wahl-map

%\begin{definition}\label{Wahl map}
 %Let $C$ be a smooth irreducible curve, then the Wahl map of $C$ is the map
 %$$\Phi_{\omega_C}: \wedge^2H^0(\omega_C) \to H^0(\omega_C^{\otimes 3})$$
 %given by $\Phi_{\omega_C}(fdz \wedge gdz) = (fg'-gf')dz^3 $
%\end{definition}

%The relation of Wahl map to extendability is given by the fact that if $C$ is a curve of genus $g \geq 3$, in its canonical embedding, then $\alpha(C) = \operatorname{cork}(\Phi_{\omega_C})$. Further if $C$ is a linearly normal curve section of a non-degenerate variety $X \subset \mathbb{P}^N$ of dimension $n$ and $H^0(N_C(-2)) = 0$, then $\alpha(X) \leq \alpha(C)-n+1$. 

%In this article, we reprove the results on extendability of general $K3$ surfaces in the non-prime case, i.e, when the hyperplane section of the embedding is a multiple of the generator of the Picard group. $K3$ surface is embedded 
\vspace{0.2cm}

\par

Using \Cref{non-extendability of K3 surfaces intro} along with results of \cite{CDS20}, \cite{ABS17}, we conclude that 
\begin{corollary}\label{corank one intro}(see \Cref{corank one})
In the situation of \Cref{non-extendability of K3 surfaces intro}, if $C \in |rB|$ is any smooth hyperplane section of $X \subset \mathbb{P}^M$, then $\alpha(C) = \operatorname{cork}(\Phi_{\omega_C}) = 1$.   
\end{corollary}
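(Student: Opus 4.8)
The plan is to package the statement as two separate identities and then feed in the cohomological data already proved. First note the set-up: since $X$ is a $K3$ surface, $\omega_X \cong \mathcal{O}_X$, and $X$ is embedded by $|rB|$ while $C \in |rB|$, so adjunction gives $\omega_C \cong \mathcal{O}_C(C) \cong \mathcal{O}_C(1)$. Thus $C \subset H = \mathbb{P}^{M-1}$ is a canonically embedded smooth curve of genus $g_C = 1 + r^2(g-1) = M$, and $\Phi_{\omega_C}$ is precisely its Gaussian--Wahl map. For such canonical curves the corank of the Gaussian map and the extendability invariant coincide, $\operatorname{cork}(\Phi_{\omega_C}) = \alpha(C)$ (the standard dictionary between the cokernel of $\Phi_{\omega_C}$ and $H^0(N_{C/\mathbb{P}^{M-1}}(-1))$, as used in \cite{CDS20}, \cite{ABS17}); so it suffices to prove $\alpha(C) = 1$.

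For the lower bound I would use that $X$ itself is a smooth --- hence non-cone --- extension of $C$, so $C$ is extendable and \Cref{ZL} forces $\alpha(C) \geq 1$; equivalently this is Wahl's theorem that a curve lying on a $K3$ surface has non-surjective Gaussian map. Crucially this bound holds for \emph{every} smooth $C \in |rB|$, since every such $C$ lies on the fixed surface $X$. For the matching upper bound I would invoke the hyperplane-section comparison of \cite{CDS20}, \cite{ABS17}: under the vanishing $H^0(N_{X/\mathbb{P}^M}(-2)) = 0$ one has $\alpha(C) = \alpha(X) + 1$, the extra dimension being contributed by the surface $X$ extending $C$ while all further extensions of $C$ descend from extensions of $X$. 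The natural vehicle for this equality is the restriction sequence
\[
0 \to N_{X/\mathbb{P}^M}(-2) \to N_{X/\mathbb{P}^M}(-1) \to N_{X/\mathbb{P}^M}(-1)|_C \to 0 ,
\]
which, combined with the two normal-bundle sequences of $C \subset X \subset \mathbb{P}^M$ and the identification $N_{C/X} \cong \mathcal{O}_C(1)$, computes $h^0(N_{C/\mathbb{P}^{M-1}}(-1))$ in terms of $h^0(N_{X/\mathbb{P}^M}(-1))$ once the boundary maps are controlled by the stated vanishing.

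I would then substitute the outputs of \Cref{non-extendability of K3 surfaces intro}. In the listed ranges, part $(1)$ (and part $(2)$ in the prime case) gives $\alpha(X) = 0$, while part $(3)$ supplies $H^0(N_{X/\mathbb{P}^M}(-2)) = 0$; together these give $\alpha(C) = 0 + 1 = 1$ and hence $\operatorname{cork}(\Phi_{\omega_C}) = \alpha(C) = 1$. Before concluding I would check that the genus $M = 1 + r^2(g-1)$ is large enough in each listed range for the corank--$\alpha$ dictionary and the comparison of \cite{CDS20}, \cite{ABS17} to apply, and that a general $X$ produces sections $C$ that are Brill--Noether--Petri general, so that no hyperelliptic or trigonal exceptions interfere.

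The hard part will be securing the \emph{equality} $\alpha(C) = \alpha(X) + 1$ rather than only the inequality $\alpha(C) \geq \alpha(X) + 1$: this is exactly where the vanishing $H^0(N_{X/\mathbb{P}^M}(-2)) = 0$ (and, if needed, the vanishing of the $H^1$ term governing the coboundary in the displayed sequence) enters, so the clean statement is only available in the ranges where part $(3)$ of the main theorem delivers that vanishing. A secondary subtlety is that the conclusion is asserted for \emph{any} smooth $C$, not merely a general one; here the lower bound $\operatorname{cork}(\Phi_{\omega_C}) \geq 1$ is uniform by Wahl, and I would make sure the upper-bound argument rests only on the non-extendability of the fixed general surface $X$, so that no genericity of $C$ within $|rB|$ is required. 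The prime case $r = 1$, where part $(3)$ does not directly apply, would need the vanishing $H^0(N_{X/\mathbb{P}^M}(-2)) = 0$ (or the corresponding result) imported separately from \cite{ABS17} in the large-genus range at hand.
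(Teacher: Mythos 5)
Your overall skeleton — corank equals $\alpha(C)$, lower bound from the existence of the extension $X$, upper bound by importing the hyperplane-section comparison of \cite{CDS20}, \cite{ABS17} and feeding in $\alpha(X)=0$ — is the same as the paper's. The paper's proof of \Cref{corank one} is exactly this: it computes $g(C)=r^2(g-1)+1\geq 11$, invokes \cite[2.2, 2.2.2]{CDS20} to produce the arithmetically Gorenstein ``universal extension'' $W$ that contains \emph{every} surface extension of $C$ as a linear section, and observes that $\operatorname{cork}(\Phi_{\omega_C})\geq 2$ would make $X$ itself a proper linear section of a higher-dimensional $W$, hence extendable, contradicting $\alpha(X)=0$. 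This is also what handles ``any smooth $C$'' rather than a general one, as you correctly anticipate.

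The genuine gap is in the \emph{vehicle} you propose for the upper bound. The restriction sequence
\begin{equation*}
0 \to H^0(N_{X/\mathbb{P}^M}(-2)) \to H^0(N_{X/\mathbb{P}^M}(-1)) \to H^0(N_{C/\mathbb{P}^{M-1}}(-1)) \to H^1(N_{X/\mathbb{P}^M}(-2)) \to \cdots
\end{equation*}
together with the vanishing $H^0(N_{X/\mathbb{P}^M}(-2))=0$ yields only $\alpha(C)\geq \alpha(X)+1$ — this is precisely how the paper proves \Cref{Wahl}, the \emph{non}-surjectivity of $\Phi_{\omega_C}$ — and gives nothing in the direction you need. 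The upper bound $\alpha(C)\leq \alpha(X)+1$ requires surjectivity of $H^0(N_{X/\mathbb{P}^M}(-1))\to H^0(N_{C/\mathbb{P}^{M-1}}(-1))$, i.e.\ injectivity of $H^1(N_{X/\mathbb{P}^M}(-2))\to H^1(N_{X/\mathbb{P}^M}(-1))$, and the paper states explicitly (\Cref{independent corank one}) that this injectivity is \emph{not} established here: it needs extra input on Gaussian maps of Hirzebruch surfaces (\cite{DM92}, \cite{DP12}) and is deferred to \cite{BM25}. So your attribution of the equality to the vanishing $H^0(N_{X/\mathbb{P}^M}(-2))=0$ is off target — that vanishing is actually irrelevant to the paper's proof of the corank statement (which is why the corollary also covers $r=1$, where part $(3)$ of \Cref{non-extendability of K3 surfaces intro} says nothing). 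The argument goes through only if you take the CDS20/ABS17 universal-extension theorem as the black box, with its own hypotheses ($g(C)\geq 11$ and a Clifford-index condition, which you rightly flag should be checked), rather than trying to rederive the comparison from the normal-bundle sequence.
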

The corank one theorem can also be derived from our methods  independently by showing the injectivity of the map $H^1(N_{\widetilde{Y}/\mathbb{P}^M}(-2\widetilde{H})) \to H^1(N_{\widetilde{Y}/\mathbb{P}^M}(-\widetilde{H}))$ on the ribbon $\widetilde{Y}$ (see \Cref{independent corank one} ). For this, along with the methods introduced here, one needs to use results on Gaussian maps on Hirzebruch surfaces in \cite{DM92} and \cite{DP12}. Since showing injectivity of this map is essential to \textcolor{black}{systematically} handle the case $r = 1$, i.e, the case of prime $K3$ surfaces, we will come back to this in a forthcoming article (\cite{BM25}) . 

\vspace{0.2cm}

\noindent\textbf{Acknowledgements.} We thank Professor Rick Miranda and Professor Angelo Lopez for useful discussions. We also thank Professor Thomas Dedieu for reading the paper carefully and for his helpful suggestions; in particular for pointing out that our results hold in greater generality than what was stated in our previous version. 

\section{Preliminaries on degeneration of $K3$ surfaces into $K3$ double structures on Hirzebruch surfaces}

We recall some definitions and a theorem on some special non-reduced degenerations of $K3$ surfaces in a projective space.

\begin{definition}\label{ribbon}
Let $Y$ be a reduced connected projective variety.
 \begin{enumerate}
     \item A scheme $\widetilde{Y}$ is called a ribbon on $Y$ with conormal bundle $L$ if $\widetilde{Y}_{\operatorname{red}} = Y$, $\mathcal{I}_{Y/\widetilde{Y}}^2 = (0)$ and $\mathcal{I}_{Y/\widetilde{Y}} \cong L$ as an $\mathcal{O}_Y$ module where $\mathcal{I}_{Y/\widetilde{Y}}$ is the ideal sheaf of $Y$ inside $\widetilde{Y}$ and $L$ is a line bundle on $Y$. 

     \item If $Y$ is a smooth surface, then a ribbon $\widetilde{Y}$ on $Y$ is called a $K3$ carpet if the dualizing sheaf $K_{\widetilde{Y}} \cong \mathcal{O}_{\widetilde{Y}}$ and $H^1(\mathcal{O}_{\widetilde{Y}})= 0$

\end{enumerate}
\end{definition}  

The following lemma characterizes the conormal bundle of a $K3$ carpet $\widetilde{Y}$ on a smooth surface $Y$.

\begin{lemma}(see \cite[Proposition $2.1$]{GGP08})
    A ribbon $\widetilde{Y}$ on smooth surface $Y$ with conormal bundle $L$ is a $K3$ carpet if and only if $L \cong K_Y$.
\end{lemma}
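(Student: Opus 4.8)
The plan is to reduce the entire statement to two short exact sequences on $\widetilde{Y}$ together with one computation of the dualizing sheaf $\omega_{\widetilde{Y}}$. Since $\mathcal{I}_{Y/\widetilde{Y}} \cong L$ with $\mathcal{I}_{Y/\widetilde{Y}}^2 = 0$, the ribbon data immediately give the structure sequence
\[
0 \to L \to \mathcal{O}_{\widetilde{Y}} \to \mathcal{O}_Y \to 0. \qquad (\star)
\]
As $\widetilde{Y}$ is a local complete intersection, hence Gorenstein, $\omega_{\widetilde{Y}}$ is an invertible sheaf, so the whole lemma becomes the question of when this invertible sheaf is trivial.

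First I would establish the formula
\[
\omega_{\widetilde{Y}} \otimes_{\mathcal{O}_{\widetilde{Y}}} \mathcal{O}_Y \cong K_Y \otimes L^{-1}. \qquad (\dagger)
\]
I would prove $(\dagger)$ by Grothendieck duality for the finite closed immersion $i \colon Y \hookrightarrow \widetilde{Y}$: using $i_*\mathcal{O}_Y = \mathcal{O}_{\widetilde{Y}}/L$ and $L^2 = 0$, one computes $\mathcal{H}om_{\mathcal{O}_{\widetilde{Y}}}(\mathcal{O}_Y, \omega_{\widetilde{Y}}) \cong L \otimes (\omega_{\widetilde{Y}}\otimes \mathcal{O}_Y)$, and this is forced to equal $\omega_Y = K_Y$, which solves for $(\dagger)$. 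As a local sanity check, where the ribbon embeds as the divisor $2Y$ inside a smooth threefold $Z$, adjunction gives $\omega_{2Y}\otimes\mathcal{O}_Y = \omega_Z(2Y)|_Y = K_Y \otimes N_{Y/Z}$ with $N_{Y/Z} = L^{-1}$, recovering $(\dagger)$. Tensoring $(\star)$ with the invertible sheaf $\omega_{\widetilde{Y}}$ and substituting $(\dagger)$ then yields the companion sequence
\[
0 \to K_Y \to \omega_{\widetilde{Y}} \to K_Y \otimes L^{-1} \to 0. \qquad (\star\star)
\]

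Necessity is then immediate: if $\widetilde{Y}$ is a $K3$ carpet then $\omega_{\widetilde{Y}} \cong \mathcal{O}_{\widetilde{Y}}$, and restricting to $Y$ and using $(\dagger)$ gives $\mathcal{O}_Y \cong K_Y \otimes L^{-1}$, i.e.\ $L \cong K_Y$. For sufficiency, assume $L \cong K_Y$; then $(\dagger)$ reads $\omega_{\widetilde{Y}}\otimes\mathcal{O}_Y \cong \mathcal{O}_Y$ and $(\star\star)$ becomes $0 \to K_Y \to \omega_{\widetilde{Y}} \to \mathcal{O}_Y \to 0$. The hard part will be promoting the triviality of $\omega_{\widetilde{Y}}$ along $Y$ to global triviality on $\widetilde{Y}$: the invertible sheaves restricting to $\mathcal{O}_Y$ form a torsor under $H^1(Y, L) = H^1(Y,K_Y)$, so I must exhibit an honest nowhere-vanishing section. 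I would obtain one by lifting the constant section $1 \in H^0(\mathcal{O}_Y)$ through the surjection in $(\star\star)$; such a lift exists as soon as $H^1(Y, K_Y) = 0$, and by Nakayama any section of $\omega_{\widetilde{Y}}$ restricting to a trivialization of $\omega_{\widetilde{Y}}\otimes\mathcal{O}_Y$ is itself a trivialization, giving $\omega_{\widetilde{Y}} \cong \mathcal{O}_{\widetilde{Y}}$. Feeding the same vanishing into the long exact sequence of $(\star)$, now $0 \to K_Y \to \mathcal{O}_{\widetilde{Y}} \to \mathcal{O}_Y \to 0$, gives $H^1(\mathcal{O}_{\widetilde{Y}}) = 0$ whenever $H^1(\mathcal{O}_Y) = 0$, completing both $K3$ carpet conditions. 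The genuine content therefore rests on the regularity input $H^1(Y, \mathcal{O}_Y) = H^1(Y, K_Y) = 0$, which holds for the surfaces of interest here (the Hirzebruch surfaces $\mathbb{F}_e$) and which is truly needed: for an abelian surface $Y$ with $L = K_Y = \mathcal{O}_Y$ the resulting ribbon fails $H^1(\mathcal{O}_{\widetilde{Y}}) = 0$. I would flag this hypothesis explicitly in the write-up.
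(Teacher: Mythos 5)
The paper does not prove this lemma --- it is imported verbatim from \cite[Proposition 2.1]{GGP08} --- so there is no in-paper argument to compare against; your proof is correct and is essentially the standard one (the restriction formula $\omega_{\widetilde{Y}}\otimes\mathcal{O}_Y\cong K_Y\otimes L^{-1}$ via duality for $Y\hookrightarrow\widetilde{Y}$, then the twisted structure sequence). Your flag about the hypothesis $H^1(\mathcal{O}_Y)=H^1(K_Y)=0$ is a genuine and correctly identified point: by Serre duality the two vanishings coincide and amount to $q(Y)=0$, which holds for the Hirzebruch surfaces (and the Enriques surfaces of \cite{GGP08}) to which the lemma is applied, so the statement is harmless in context but is indeed false for, say, abelian $Y$ exactly as you observe. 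One small addendum that tidies the statement: your own sequence $0\to K_Y\to\mathcal{O}_{\widetilde{Y}}\to\mathcal{O}_Y\to 0$ together with the surjectivity of $H^0(\mathcal{O}_{\widetilde{Y}})\to H^0(\mathcal{O}_Y)$ shows that $H^1(\mathcal{O}_{\widetilde{Y}})=0$ already forces $H^1(K_Y)=0$, so the ``only if'' direction proves that irregular surfaces carry no $K3$ carpets at all; hence the clean general statement is ``$\widetilde{Y}$ is a $K3$ carpet iff $Y$ is regular and $L\cong K_Y$,'' and your proof delivers exactly that.
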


\color{black}
We recall the definition of the Hilbert scheme of $K3$ surfaces.

\begin{definition}\label{Hilbert schemes of K3 surfaces}
    Let $\mathcal{H}_{r,g}$ denote the Hilbert scheme of $K3$ surfaces of index $r$ and genus $g$, i.e, $K3$ surfaces $X \subset \mathbb{P}^{1+r^2(g-1)}$ embedded by the complete linear series of $rB$ where $B$ is a primitive very ample line bundle with $B^2 = 2g-2$. If $r = 1$, we call $\mathcal{H}_{r,g}$ the Hilbert scheme of prime $K3$ surfaces while if $r \geq 2$, $\mathcal{H}_{r,g}$ is called the Hilbert scheme of non-prime $K3$ surfaces.
\end{definition}
\color{black}

Next we recall a degeneration theorem of $K3$ surfaces into $K3$ carpets on Hirzebruch surfaces.

\begin{theorem}\label{degeneration into $K3$ carpets}(\cite[Theorem $4.1$]{BMR21} and \cite[Theorem $2.9$]{GP97})
Let $Y = \mathbb{F}_e$ be a Hirzebruch surface. Let $H = aC_0+bf$ be a very ample line bundle on $Y$, where $C_0$ is the class of section, satisfying $C_0^2 = -e$ and $f$ is the class of a fiber satisfying $f^2 = 0$ (this happens if and only if $b \geq ae+1$). Consider the embedding $Y \hookrightarrow \mathbb{P}^N$ induced by the complete linear series $|H|$.  
\begin{enumerate}
    \item Then there exists $K3$ carpets $\widetilde{Y}$ on $Y$ along with a very ample line bundle $\widetilde{H}$ on $\widetilde{Y}$ such that $\widetilde{H}|_Y = H$.

    \item If $\widetilde{Y} \hookrightarrow \mathbb{P}^M$ denote the embedding induced by the complete linear series $|\widetilde{H}|$, then $\widetilde{Y}$ is smoothable in $\mathbb{P}^M$ into smooth $K3$ surfaces embedded by the complete linear series of a very ample line bundle.

    \item For $0 \leq e \leq 2$, the Hilbert point of $\widetilde{Y}$ is smooth and the general smooth surface of the unique Hilbert component containing $\widetilde{Y}$ is a 
    \begin{itemize}
        \item[(a)] prime $K3$ surface if $\textrm{gcd}(a,b) = 1$ 
        \item[(b)] a non-prime $K3$ surface embedded by $rB$, with $r = \operatorname{gcd}(a,b)$ and $B$ is a primitive very ample line bundle. 
    \end{itemize}
\end{enumerate}
\end{theorem}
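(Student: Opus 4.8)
The plan is to prove the three assertions in turn, combining the intrinsic theory of ribbons with an abstract-to-embedded smoothing argument. For (1), the preceding lemma already forces the conormal bundle of any K3 carpet on $Y=\mathbb{F}_e$ to equal $K_Y$, so I would first produce the abstract carpet: ribbon structures on $Y$ with conormal $K_Y$ are classified by $\operatorname{Ext}^1_{\mathcal{O}_Y}(\Omega_Y, K_Y)$, and I would verify through the lemma's criterion that the resulting $\widetilde{Y}$ satisfies $K_{\widetilde{Y}}\cong\mathcal{O}_{\widetilde{Y}}$ and $H^1(\mathcal{O}_{\widetilde{Y}})=0$. To make the embedding compatible with $Y\hookrightarrow\mathbb{P}^N$, I would construct $\widetilde{H}$ by doubling: the structure sequence $0\to K_Y\to\mathcal{O}_{\widetilde{Y}}\to\mathcal{O}_Y\to 0$ twisted by $\widetilde{H}$ gives $0\to K_Y\otimes H\to\widetilde{H}\to H\to 0$ with $\widetilde{H}|_Y=H$, and a Riemann--Roch computation on $\mathbb{F}_e$ yields $h^0(\widetilde{H})=h^0(H)+h^0(K_Y\otimes H)=2+H^2=M+1$ once $H^1(K_Y\otimes H)=0$ makes the sequence exact on sections. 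Very ampleness of $\widetilde{H}$, with $b\ge ae+1$ securing very ampleness of $H$, would follow by lifting the separation of points and tangents through the conormal filtration.

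For (2) and (3) the engine is the normal sheaf, and the first task is to reduce the cohomology of $N_{\widetilde{Y}/\mathbb{P}^M}$ to computable data on $Y$. The conormal filtration of $\widetilde{Y}$ along $Y$ produces an exact sequence exhibiting $N_{\widetilde{Y}/\mathbb{P}^M}|_Y$ as an extension relating $N_{Y/\mathbb{P}^M}$ and the $K_Y$-twist $T_{\mathbb{P}^M}|_Y\otimes K_Y$; chasing this together with the structure sequence above, and resolving $T_{\mathbb{P}^M}|_Y\otimes K_Y$ by the Euler sequence into the line bundles $H\otimes K_Y$ and $K_Y$ on $\mathbb{F}_e$, reduces everything to line-bundle cohomology and twists of $N_{Y/\mathbb{P}^M}$. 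The target is $H^1(N_{\widetilde{Y}/\mathbb{P}^M})=0$, which makes $[\widetilde{Y}]$ an unobstructed (hence smooth, for $0\le e\le 2$) point lying on a unique Hilbert component of dimension $h^0(N_{\widetilde{Y}/\mathbb{P}^M})$. To obtain smoothability I would invoke the abstract smoothability of K3 carpets and lift it: showing the forgetful map $H^0(N_{\widetilde{Y}/\mathbb{P}^M})\to T^1_{\widetilde{Y}}$ meets the smoothing direction, with the obstruction to lifting landing in a $T^2$/$H^2$ group that vanishes in the range $0\le e\le 2$; the generic member of the component is then a smooth polarized K3 surface.

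For the prime/non-prime dichotomy in (3), I would track the divisibility of the polarization through the family. Writing $r=\operatorname{gcd}(a,b)$ and $H=rH'$ with $H'=a'C_0+b'f$ primitive in $\operatorname{Pic}(\mathbb{F}_e)$, the point is that $H'$ also propagates through the smoothing to a line bundle $B$ on the general fiber with $rB$ equal to the limiting polarization $\widetilde{H}$; primitivity and very ampleness of $B$ are inherited from those of $H'$ on $Y$ via the specialization of Picard groups. Hence the general smooth fiber is a non-prime K3 of index $r$ when $r>1$ and a prime K3 when $r=1$, as claimed.

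The hard part will be the vanishing $H^1(N_{\widetilde{Y}/\mathbb{P}^M})=0$ together with the lift from the abstract to the embedded smoothing: the normal sheaf of the non-reduced carpet must be controlled entirely through $N_{Y/\mathbb{P}^M}$ and the delicate coboundary maps in the reduction above, and the hypothesis $0\le e\le 2$ is precisely what forces the relevant obstruction groups to vanish. A secondary delicate point is verifying that the primitive subclass $H'$, and not merely its multiple $H$, genuinely extends over the degeneration, which is what pins down the index $r=\operatorname{gcd}(a,b)$ of the general fiber.
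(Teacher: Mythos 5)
The paper does not reprove parts (1), (2), or the smoothness of the Hilbert point in (3): it cites \cite[Theorems 3.1, 4.3, 5.1]{BMR21} and \cite[Proposition 1.7, Theorems 3.5, 4.1]{GP97} for all of that. Your sketch of those parts (Ext-classification of ribbons with conormal $K_Y$, the section count $h^0(\widetilde H)=h^0(H)+h^0(H+K_Y)=2+H^2$, vanishing of $H^1(N_{\widetilde Y/\mathbb{P}^M})$, and lifting the abstract smoothing) is consistent in spirit with those references, so no objection there beyond its being a sketch of results the paper takes as known.

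The genuine gap is in your treatment of the prime/non-prime dichotomy, which is the only part the paper actually proves. You assert that the primitive class $H'$ with $H=rH'$ ``propagates through the smoothing to a line bundle $B$ on the general fiber'' and that ``primitivity \ldots is inherited \ldots via the specialization of Picard groups.'' This is precisely the point that needs proof, and primitivity of $H'$ in $\operatorname{Pic}(\mathbb{F}_e)$ does not by itself control the divisibility of the corresponding class in $\operatorname{Pic}$ of the nearby smooth fiber: without knowing the Picard lattice of that fiber, the polarization could a priori be more divisible (if the pullback class acquires a root) or the $r$-th root could fail to exist on nearby fibers (divisibility of a class is not preserved under arbitrary specialization in either direction). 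The paper's argument supplies exactly the missing input: the smoothing of \cite{BMR21}/\cite{GP97} is constructed by deforming the finite double cover $\varphi = i\circ\pi: X\to\mathbb{P}^M$, so the general fiber is polarized by a deformation of $H_0=\pi^*(aC_0+bf)$ on the $K3$ double cover $X\to\mathbb{F}_e$; since for a general smooth irreducible branch divisor $D\in|-2K_Y|$ one has $\operatorname{Pic}(X)=\mathbb{Z}\pi^*C_0\oplus\mathbb{Z}\pi^*f$ (\cite[Proposition 6.3]{AHL10}), the divisibility of $H_0$ is exactly $\gcd(a,b)$, and this divisibility is constant in a smooth family. Your proposal never identifies the general fiber with such a double cover nor invokes any computation of its Picard group, so the index claim $r=\gcd(a,b)$ is not established.
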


\begin{proof} Part $(1)$, $(2)$ and the statement on smoothness of Hilbert point in part $(3)$ of the Theorem follows from \cite[Theorem $3.1 (3)$, Theorem $4.3$, Theorem $5.1$]{BMR21} and \cite[Proposition $1.7$, Theorem $3.5$, Theorem $4.1$]{GP97}.

We prove part $(3)$. Let $Y = \mathbb{F}_e$ with $0 \leq e \leq 2$. Consider the $K3$ double cover of $\pi:X \to Y$ branched along $D \in |-2K_Y|$. Let $H_0 = \pi^*(\mathcal{O}_{Y}(1)) = \pi^*(aC_0+bf)$ and consider the composition  
\[
\begin{tikzcd}
X \arrow[dr, "\varphi"] \arrow[d, "\pi"] & \\
Y \arrow[r, hook] & \mathbb{P}^M
\end{tikzcd}
\]
Recall the smoothing fibers $X_t \xhookrightarrow{\varphi_t} \mathbb{P}^M$ are obtained as the image of an embedding $\varphi_t$ which is a general deformation of $\varphi$ along a one parameter family $T$ and are hence embedded inside $\mathbb{P}^M$ by the complete linear series of a line bundle $H_t$ which is a deformation of $H_0$. Since the multiplicity of a polarization remains constant on a smooth family, we have to find the multiplicity of $H_0$ on the double cover $X$. Now since $D$ is smooth and irreducible part $(3)$ for $Y = \mathbb{F}_e$, $0 \leq e \leq 2$ follows from the fact that $\textrm{Pic}(X)$ is generated by $\pi^*\mathcal{O}_Y(C_0)$ and $\pi^*\mathcal{O}_Y(f)$ (see \cite[Proposition $6.3$]{AHL10}).

\end{proof}

\section{Extendability of general $K3$ surfaces}

As mentioned before, to give an upper bound to $\alpha(X)$ for a $K3$ surface $X \subset \mathbb{P}^M$, we will degenerate $X \subset \mathbb{P}^M$ to a $K3$ carpet $\widetilde{Y} \subset \mathbb{P}^M$, using \Cref{degeneration into $K3$ carpets} and compute $\alpha(\widetilde{Y})$.  The main tool for all our computations that follow consist of the three exact sequences \Cref{1}, \Cref{2} and \Cref{3}  that decomposes the cohomology of the twisted normal bundle $N_{\widetilde{Y}/\mathbb{P}^M}(-\widetilde{H})$ of a ribbon $\widetilde{Y} \subset \mathbb{P}^M$ in terms of cohomology of various twists of the normal bundle $N_{Y/\mathbb{P}^M}$ of the reduced part.

%\begin{theorem}\label{ZL for snc}
% Let $V = Y_1 \bigcup_E Y_2 \hookrightarrow \mathbb{P}^M$ be a simple normal crossing variety of dimension $d > 1$, embedded by the complete linear series of a very ample line bundle $\widetilde{H}$, where $Y_1 = \mathbb{F}_e \hookrightarrow \mathbb{P}^N$, $e \leq 2$ is embedded by the complete linear series of a very ample line bundle $|aC_0+bf|$, $E \in |-K_{Y_1}|$, $Y_2$ is a general element of $\mathcal{F}_{E,Y}$ and $M = N + h^0((a-2)C_0+(b-e-2)f)$ (i.e, we are in the situation of Theorem \ref{Hirzebruch main}, $(1)$ (b). Let $H = \widetilde{H}|_{Y_1}$ and assume $H^1(-H+K_{Y_1}) = 0$. Let

% $$\beta = h^1(T_{Y_1}(-H+K_{Y_1})) + h^1(T_{Y_1}(-H)) - h^0(T_{Y_1}(-H)) +  h^1(-H-K_{Y_1}) + h^0(-H-2K_{Y_1})$$ 

\begin{theorem}\label{ZL for K3 carpets}
 Let $\widetilde{Y} \hookrightarrow \mathbb{P}^M$ be a $K3$ carpet embedded by the complete linear series of a very ample line bundle $\widetilde{H}$ such that the possibly degenerate embedding $Y = \mathbb{F}_e \hookrightarrow \mathbb{P}^M$ induced on the reduced part is given by the complete linear series of a very ample line bundle $H = aC_0+bf$ followed by a linear embedding of projective spaces. Assume $H^1(-H+K_Y) = 0$. 
 
 \begin{enumerate}
 \item Let $$\beta = h^1(T_Y(-H+K_Y)) + h^1(T_Y(-H)) - h^0(T_Y(-H)) +  h^1(-H-K_Y) + h^0(-H-2K_Y)$$ 
Then 
$$\alpha(\widetilde{Y}) = h^0(N_{\widetilde{Y}/\mathbb{P}^M}(-\widetilde{H}))-M-1 \leq  \beta$$ 
In particular if $\beta = 0$, then $\alpha(\widetilde{Y}) \leq 0$. Consequently, the general smooth $K3$ surfaces in the Hilbert component containing $\widetilde{Y}$ are not extendable .

\item Let $a = 2$, $\widetilde{Y}$ a general embedded $K3$ carpet represented by a general no-where vanishing section in $H^0(N_{Y/\mathbb{P}^M} \otimes K_Y)$ and set
%and $$\gamma = h^0(N_{Y/\mathbb{P}^M}(-H+K_Y)+h^0(N_{Y/\mathbb{P}^M}(-H)+h^0(-H-2K_Y)$$
\begin{align*}
    \gamma & =  h^0(N_{Y/\mathbb{P}^M}(-H+K_Y)+h^0(N_{Y/\mathbb{P}^M}(-H)+h^0(-H-2K_Y)-M-1 \\
    & \leq h^1(T_Y(-H+K_Y))+h^0(N_{Y/\mathbb{P}^M}(-H)+h^0(-H-2K_Y)-M-1
\end{align*}

Then 
$$\alpha(\widetilde{Y}) = h^0(N_{\widetilde{Y}/\mathbb{P}^M}(-\widetilde{H}))-M-1 \leq  \gamma$$ 
In particular if $\gamma = 0$, then $\alpha(\widetilde{Y}) \leq 0$. Consequently, the general smooth $K3$ surfaces in the Hilbert component containing $\widetilde{Y}$ are not extendable.

\end{enumerate}
\end{theorem}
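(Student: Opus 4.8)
The plan is to reduce the computation of $h^0(N_{\widetilde Y/\mathbb{P}^M}(-\widetilde H))$ entirely to the cohomology of twists of $N_{Y/\mathbb{P}^M}$ and of line bundles on the Hirzebruch surface $Y$, and then to trade the normal-bundle cohomology for tangent-bundle cohomology via the Euler sequence of $Y\subset \mathbb{P}^M$. The whole argument is an exercise in bounding $h^0$ from above through the long exact sequences attached to \Cref{1}, \Cref{2} and \Cref{3}, the point being that every connecting homomorphism can be discarded only in the direction that \emph{increases} $h^0$, so that the final estimate is a clean sum of cohomological dimensions.

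For part $(1)$ I would first feed $N_{\widetilde Y/\mathbb{P}^M}(-\widetilde H)$ into the ribbon sequence \Cref{1}, whose two outer terms are $N_{\widetilde Y/\mathbb{P}^M}|_Y(-H)\otimes K_Y$ and $N_{\widetilde Y/\mathbb{P}^M}|_Y(-H)$ (using $\widetilde H|_Y=H$ and conormal bundle $K_Y$). Subadditivity of $h^0$ along this sequence gives
\[
h^0\bigl(N_{\widetilde Y/\mathbb{P}^M}(-\widetilde H)\bigr)\ \le\ h^0\bigl(N_{\widetilde Y/\mathbb{P}^M}|_Y(-H+K_Y)\bigr)+h^0\bigl(N_{\widetilde Y/\mathbb{P}^M}|_Y(-H)\bigr).
\]
I would then use \Cref{2} and \Cref{3} to replace each restricted term $N_{\widetilde Y/\mathbb{P}^M}|_Y$ by $N_{Y/\mathbb{P}^M}$, the price being the auxiliary line-bundle summands that produce $h^0(-H-2K_Y)$ and $h^1(-H-K_Y)$; this is precisely where the hypothesis $H^1(-H+K_Y)=0$ is spent, to kill the connecting map that would otherwise obstruct the splitting of the $K_Y$-twisted contribution.

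Next I would convert from $N_{Y/\mathbb{P}^M}$ to $T_Y$ using $0\to T_Y\to T_{\mathbb{P}^M}|_Y\to N_{Y/\mathbb{P}^M}\to 0$. Twisting by $-H$ and reading off the long exact sequence yields
\[
h^0\bigl(N_{Y/\mathbb{P}^M}(-H)\bigr)\ \le\ h^0\bigl(T_{\mathbb{P}^M}|_Y(-H)\bigr)-h^0\bigl(T_Y(-H)\bigr)+h^1\bigl(T_Y(-H)\bigr),
\]
while the Euler sequence restricted to $Y$ computes $h^0(T_{\mathbb{P}^M}|_Y(-H))=M+1$ (using $H^0(\mathcal O_Y(-H))=H^1(\mathcal O_Y(-H))=0$, automatic for very ample $H$). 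That $M+1$ is exactly what absorbs the $-M-1$ in the definition of $\alpha$. For the $K_Y$-twisted term one runs the same comparison twisted by $-H+K_Y$: since $h^0(K_Y)=0$ on any Hirzebruch surface and $H^1(-H+K_Y)=0$, one gets $h^0(T_{\mathbb{P}^M}|_Y(-H+K_Y))=0$ and hence $h^0(N_{Y/\mathbb{P}^M}(-H+K_Y))\le h^1(T_Y(-H+K_Y))$. Assembling the three contributions gives $\alpha(\widetilde Y)\le\beta$; when $\beta=0$ this forces $\alpha(\widetilde Y)\le 0$, and upper semicontinuity of $h^0(N(-1))$ along the smoothing family of \Cref{degeneration into $K3$ carpets} transfers this to $\alpha(X)\le 0$ for the general smooth fibre, so non-extendability follows from \Cref{ZL}.

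For part $(2)$ the sharpening comes from the generality of the carpet. A $K3$ carpet structure on $Y$ is encoded by a nowhere-vanishing section of $N_{Y/\mathbb{P}^M}\otimes K_Y$, and for a general such section the coboundary map governing the $h^1(-H-K_Y)$ contribution acquires maximal rank, so that term drops out; this replaces the crude estimate coming from \Cref{2} and \Cref{3} by the first displayed expression for $\gamma$, namely $h^0(N_{Y/\mathbb{P}^M}(-H+K_Y))+h^0(N_{Y/\mathbb{P}^M}(-H))+h^0(-H-2K_Y)-M-1$. The second expression for $\gamma$ then follows by reusing the bound $h^0(N_{Y/\mathbb{P}^M}(-H+K_Y))\le h^1(T_Y(-H+K_Y))$ established above. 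The main obstacle throughout is the bookkeeping of the connecting homomorphisms, so that no $h^0$ is accidentally over- or under-counted when passing between \Cref{1}, \Cref{2} and \Cref{3}; and the genuinely delicate point in part $(2)$ is proving that generality of the section in $H^0(N_{Y/\mathbb{P}^M}\otimes K_Y)$ really does force the relevant coboundary to have maximal rank. This last step is a semicontinuity argument that has to be set up on the whole family of carpets over $H^0(N_{Y/\mathbb{P}^M}\otimes K_Y)$ rather than on a single fibre, and the hypothesis $a=2$ is what guarantees that general nowhere-vanishing sections exist and that the resulting rank computation behaves as required.
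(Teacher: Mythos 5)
Your reduction for part (1) --- feeding $N_{\widetilde Y/\mathbb{P}^M}(-\widetilde H)$ through \Cref{1}, \Cref{2}, \Cref{3} and then trading $N_{Y/\mathbb{P}^M}$ for $T_Y$ via the Euler sequence, with $h^0(T_{\mathbb{P}^M}|_Y(-H))=M+1$ absorbing the $-M-1$ --- is exactly the route the paper takes (part (1) is quoted from \cite{BM24}, and the same chain of estimates is written out inside the proof of part (2)). One misattribution: the hypothesis $H^1(-H+K_Y)=0$ is not spent on the passage from $N_{\widetilde Y/\mathbb{P}^M}\otimes\mathcal{O}_Y$ to $N_{Y/\mathbb{P}^M}$ (there one only needs $H^0(-H)=H^1(-H)=0$); it is spent in the Euler sequence twisted by $-H+K_Y$, to force $H^0(T_{\mathbb{P}^M}|_Y(-H+K_Y))=0$.

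The genuine gap is in part (2). The entire content of the improvement from $\beta$ to $\gamma$ is \Cref{splits}: for a general ribbon the map $H^0(N_{Y/\mathbb{P}^M}(-H))\to H^0(\mathcal{H}om(I_{\widetilde{Y}}/I_{Y}^2,\mathcal{O}_Y)(-H))$ is surjective, equivalently the coboundary into $H^1(-H-K_Y)$ is \emph{zero} (not ``maximal rank'' --- a coboundary of maximal rank would make the $h^1(-H-K_Y)$ contribution as large as possible rather than kill it). You assert this and propose to obtain it by semicontinuity over the family of carpets, but semicontinuity only propagates the surjectivity from one carpet to the general one; it cannot produce the first carpet for which it holds, and that existence is the actual theorem. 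The paper's argument is constructive: because $a=2$ one has $-H-K_Y=-(b-e-2)f$, a multiple of the fibre class, so $p_*(-H-K_Y)=\mathcal{O}_{\mathbb{P}^1}(-(b-e-2))$ is a line bundle and $R^1p_*(-H-K_Y)=0$; pushing \Cref{3} forward along $p:Y\to\mathbb{P}^1$, one shows the resulting short exact sequence of pushforwards splits for a suitable ribbon by exhibiting a retraction $\lambda$ (via the identification $R^1p_*T_Y(-H)\cong p_*(-H-K_Y)$) and choosing the injection $-K_Y\hookrightarrow N_{Y/\mathbb{P}^M}$ defining the ribbon so that $\lambda\circ\beta\neq 0$; the possibility of such a choice ultimately rests on the non-surjectivity of $H^0(T_{\mathbb{P}^M}|_Y(K_Y))\to H^0(N_{Y/\mathbb{P}^M}(K_Y))$, i.e.\ on $h^1(T_Y\otimes K_Y)=2>1=h^1(T_{\mathbb{P}^M}|_Y\otimes K_Y)$. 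None of this is in your sketch, and your explanation of where $a=2$ enters (existence of nowhere-vanishing sections of $N_{Y/\mathbb{P}^M}\otimes K_Y$) is not the right one --- such sections exist for all $a$; what $a=2$ buys is that $-H-K_Y$ is fibral, which is what makes the whole pushforward-and-split argument run.
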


\begin{proof}
    Part $(1)$ has been proved in \cite[Theorem $2.2$]{BM24}. We prove part $(2)$. We have the following exact sequences (see \cite[Lemma $4.1$]{GGP08})
\begin{equation}\label{1}
    0 \to N_{\widetilde{Y}/\mathbb{P}^M} (K_Y) \to N_{\widetilde{Y}/\mathbb{P}^M} \to N_{\widetilde{Y}/\mathbb{P}^M} \otimes \mathcal{O}_Y \to 0
\end{equation}
\begin{equation}\label{2}
    0 \to \mathcal{H}om(I_{\widetilde{Y}}/I_{Y}^2, \mathcal{O}_Y) \to N_{\widetilde{Y}/\mathbb{P}^M} \otimes \mathcal{O}_Y \to \mathcal{O}_Y(-2K_Y) \to 0 
\end{equation}
\begin{equation}\label{3}
    0 \to \mathcal{O}_Y(-K_Y) \to N_{Y/\mathbb{P}^M} \to \mathcal{H}om(I_{\widetilde{Y}}/I_{Y}^2, \mathcal{O}_Y) \to 0
\end{equation}

Tensoring equation \ref{1} by $\mathcal{O}_{\widetilde{Y}}(-\widetilde{H})$, we have that 

\begin{equation}\label{4}
 h^0(N_{\widetilde{Y}/\mathbb{P}^M}(-\widetilde{H})) \leq h^0(N_{\widetilde{Y}/\mathbb{P}^M}(-H+K_Y)) + h^0(N_{\widetilde{Y}/\mathbb{P}^M}(-H))
\end{equation}

We compute $h^0(N_{\widetilde{Y}/\mathbb{P}^M}(-H+K_Y))$. Tensoring equation \ref{2} by $\mathcal{O}_Y(-H+K_Y)$ we have 

\begin{equation*}
    0 \to \mathcal{H}om(I_{\widetilde{Y}}/I_{Y}^2, \mathcal{O}_Y)(-H+K_Y) \to N_{\widetilde{Y}/\mathbb{P}^M}(-H+K_Y) \to \mathcal{O}_Y(-H-K_Y) \to 0 
\end{equation*}

Hence 

\begin{equation}\label{6}
    h^0(N_{\widetilde{Y}/\mathbb{P}^M}(-H+K_Y)) \leq h^0(\mathcal{H}om(I_{\widetilde{Y}}/I_{Y}^2, \mathcal{O}_Y)(-H+K_Y)) + h^0(\mathcal{O}_Y(-H-K_Y))
\end{equation}

Tensoring equation \ref{3}, by $\mathcal{O}_Y(-H+K_Y)$ we have 

\begin{equation*}
    0 \to \mathcal{O}_Y(-H) \to N_{Y/\mathbb{P}^M}(-H+K_Y) \to \mathcal{H}om(I_{\widetilde{Y}}/I_{Y}^2, \mathcal{O}_Y)(-H+K_Y) \to 0
\end{equation*}

Since $H^1(-H) = 0$, by Kodaira vanishing theorem, and $h^0(-H) = 0$, we have that 

\begin{equation}\label{7}
   h^0(\mathcal{H}om(I_{\widetilde{Y}}/I_{Y}^2, \mathcal{O}_Y)(-H+K_Y)) = h^0(N_{Y/\mathbb{P}^M}(-H+K_Y))  
\end{equation}

We now compute $h^0(N_{\widetilde{Y}/\mathbb{P}^M}(-H))$. Tensoring equation \ref{2} by $\mathcal{O}_Y(-H)$ we have 

\begin{equation*}
    0 \to \mathcal{H}om(I_{\widetilde{Y}}/I_{Y}^2, \mathcal{O}_Y)(-H) \to N_{\widetilde{Y}/\mathbb{P}^M}(-H) \to \mathcal{O}_Y(-H-2K_Y) \to 0 
\end{equation*}

Hence 

\begin{equation}\label{7}
    h^0(N_{\widetilde{Y}/\mathbb{P}^M}(-H)) \leq h^0(\mathcal{H}om(I_{\widetilde{Y}}/I_{Y}^2, \mathcal{O}_Y)(-H)) + h^0(\mathcal{O}_Y(-H-2K_Y))
\end{equation}

Tensoring equation \ref{3}, by $\mathcal{O}_Y(-H)$ we have 

\begin{equation*}
    0 \to \mathcal{O}_Y(-H-K_Y) \to N_{Y/\mathbb{P}^M}(-H) \to \mathcal{H}om(I_{\widetilde{Y}}/I_{Y}^2, \mathcal{O}_Y)(-H) \to 0
\end{equation*}
Now we claim that 

\begin{claim}\label{splits}
    For a general ribbon $\widetilde{Y}$, the induced map $H^0(N_{Y/\mathbb{P}^M}(-H)) \to H^0(\mathcal{H}om(I_{\widetilde{Y}}/I_{Y}^2, \mathcal{O}_Y)(-H))$ is surjective.
\end{claim}

Granting the claim for now, we have that

\begin{equation}\label{7}
   h^0(\mathcal{H}om(I_{\widetilde{Y}}/I_{Y}^2, \mathcal{O}_Y)(-H)) = h^0(N_{Y/\mathbb{P}^M}(-H)) - h^0(-H-K_Y) 
\end{equation}

Hence we have that 

\begin{equation*}
 h^0(N_{\widetilde{Y}/\mathbb{P}^M}(-\widetilde{H})) \leq h^0(-H-K_Y) + h^0(N_{Y/\mathbb{P}^M}(-H+K_Y)) + h^0(N_{Y/\mathbb{P}^M}(-H)) - h^0(-H-K_Y) + h^0(-H-2K_Y)
\end{equation*}

\begin{equation}\label{8}
h^0(N_{\widetilde{Y}/\mathbb{P}^M}(-\widetilde{H})) \leq  h^0(N_{Y/\mathbb{P}^M}(-H+K_Y)) + h^0(N_{Y/\mathbb{P}^M}(-H)) + h^0(-H-2K_Y)
\end{equation}

We now compute $h^0(N_{Y/\mathbb{P}^M}(-H+K_Y))$. 
We have

\begin{equation}\label{9}
   0 \to T_Y \to T_{\mathbb{P}^M}|_Y \to N_{Y/\mathbb{P}^M} \to 0 
\end{equation}

and the Euler exact sequence restricted to $Y$

\begin{equation}\label{11}
   0 \to \mathcal{O}_Y \to \mathcal{O}_Y(H)^{M+1} \to T_{\mathbb{P}^M}|_Y \to 0 
\end{equation}

Tensoring equation \ref{11} by $\mathcal{O}_Y(-H+K_Y)$, we have 

\begin{equation*}
     H^0(\mathcal{O}_Y(K_Y))^{M+1} \to H^0(T_{\mathbb{P}^M}|_Y(-H+K_Y)) \to H^1(\mathcal{O}_Y(-H+K_Y)) 
\end{equation*}

We have that $H^0(\mathcal{O}_Y(K_Y)) = 0$ and by assumption $H^1(\mathcal{O}_Y(-H+K_Y)) = 0$ and hence $H^0(T_{\mathbb{P}^M}|_Y(-H+K_Y)) = 0$. 

%Similarly we have from the same exact sequence

%\begin{equation*}
 %   H^1(\mathcal{O}_Y(K_Y))^{N+1} \to H^1(T_{\mathbb{P}^M}|_Y(-H+K_Y)) \to H^2(\mathcal{O}_Y(-H+K_Y))
%\end{equation*}

Tensoring equation \ref{9} by $\mathcal{O}_Y(-H+K_Y)$, we have 

\begin{equation*}
    H^0(T_{\mathbb{P}^M}|_Y(-H+K_Y)) \to H^0(N_{Y/\mathbb{P}^M}(-H+K_Y)) \to H^1(T_Y(-H+K_Y))
\end{equation*}

Since $H^0(T_{\mathbb{P}^M}|_Y(-H+K_Y)) = 0$, 

\begin{equation}\label{14}
    h^0(N_{Y/\mathbb{P}^M}(-H+K_Y)) \leq h^1(T_Y(-H+K_Y))
\end{equation}

Plugging everything back into equation \ref{8}, we have

\begin{equation}\label{15}
h^0(N_{\widetilde{Y}/\mathbb{P}^M}(-\widetilde{H})) \leq  M+1 + h^1(T_Y(-H+K_Y)) + h^0(N_{Y/\mathbb{P}^M}(-H)) + h^0(-H-2K_Y)
\end{equation}

\end{proof}

We now prove the claim.

\smallskip

\textit{Proof of Claim \ref{splits}.} Consider the exact sequence given by twisting equation \ref{3} by $-H$. 
\begin{equation}\label{16}
0 \to -K_Y-H \to N_{Y/\mathbb{P}^M}(-H) \to \mathcal{H}om(I_{\widetilde{Y}}/I_{Y}^2, \mathcal{O}_Y)(-H) \to 0
\end{equation}

Let $p: Y \to \mathbb{P}^1$ denote the smooth fibration. Now note that $-K_Y-H = -(b-e-2)f$ and hence $R^1p_*(-H-K_Y) = 0$. Hence we have the following exact sequence by pushing forward the exact sequence \ref{16}

\begin{equation}\label{17}
    0 \to p_*(-H-K_Y) \to p_*(N_{Y/\mathbb{P}^M}(-H)) \to p_*(\mathcal{H}om(I_{\widetilde{Y}}/I_{Y}^2, \mathcal{O}_Y)(-H)) \to 0
\end{equation}
    
It is enough to show that the exact sequence \ref{17} is split for a general ribbon $\widetilde{Y}$. To see this we first show that there is a surjection from $p_*(N_{Y/\mathbb{P}^M}(-H))$ to $p_*(-H-K_Y)$. To see this consider the exact sequence \ref{9} twisted by $-H$ to get 

\begin{equation}\label{18}
   0 \to T_Y(-H) \to T_{\mathbb{P}^M}|_Y(-H) \to N_{Y/\mathbb{P}^M}(-H) \to 0 
\end{equation}

We show that

\begin{equation*}
   R^1p_*(T_{\mathbb{P}^M}|_Y(-H)) = 0, p_*(T_Y(-H)) = \mathcal{O}_{\mathbb{P}^1}(-(b-e)) \operatorname{and} R^1p_*(T_Y(-H)) =  \mathcal{O}_{\mathbb{P}^1}(-(b-e-2))  
\end{equation*}

To see the first vanishing once again note that from the Euler exact sequence pulled back to $Y$ and twisted by $-H$, we have 

\begin{equation}\label{24}
    0 \to \mathcal{O}_Y(-H) \to \mathcal{O}_Y^{\otimes M+1} \to T_{\mathbb{P}^M}|_Y(-H) \to 0
\end{equation}

Since $R^1p_*(\mathcal{O}_Y) = R^2(p_*\mathcal{O}_Y(-H)) = 0$, we have that $R^1p_*(T_{\mathbb{P}^M}|_Y(-H)) = 0$. Now we turn to $R^ip_*(T_Y(-H))$. Consider the exact sequence of horizontal and vertical tangent bundles on $Y$ twisted by $-H$
\begin{equation}\label{19}
    0 \to T_{Y/\mathbb{P}^1}(-H) \to T_Y(-H) \to p^*T_{\mathbb{P}^1}(-H) \to 0
\end{equation}

Using the relative Euler exact sequence we have that 
$T_{Y/\mathbb{P}^1} = 2C_0+ef$, so that $T_{Y/\mathbb{P}^1}(-H) = -(b-e)f$. Therefore $p_*(T_{Y/\mathbb{P}^1}(-H)) = \mathcal{O}_{\mathbb{P}^1}(-(b-e))$ and $R^ip_*(T_{Y/\mathbb{P}^1}(-H)) = R^ip_*(-(b-e)f) = 0$ for $i = 1,2$. Now $p^*T_{\mathbb{P}^1}(-H) = -2C_0-(b-2)f$. Therefore, $p_*p^*T_{\mathbb{P}^1}(-H) = p_*(-2C_0-(b-2)f) = \mathcal{O}_{\mathbb{P}^1}(-(b-2)) \otimes p_*(-2C_0) = 0$ and

\begin{align*}\label{20}
  R^1p_*(p^*T_{\mathbb{P}^1}(-H)) & = R^1p_*(-2C_0-(b-2)f) \\
  & = \mathcal{O}_{\mathbb{P}^1}(-(b-2)) \otimes R^1p_*(-2C_0) \\
  & = \mathcal{O}_{\mathbb{P}^1}(-(b-2)) \otimes (p_*(\mathcal{O}_Y))^* \otimes (\bigwedge^2(\mathcal{O}_{\mathbb{P}^1} \oplus \mathcal{O}_{\mathbb{P}^1}(-e)))^* \\ 
  & = \mathcal{O}_{\mathbb{P}^1}(-(b-e-2))
\end{align*}

Pushing forward the sequence \ref{19}, we have that 

\begin{equation}\label{21}
0 \to p_*(T_{Y/\mathbb{P}^1}(-H)) = \mathcal{O}_{\mathbb{P}^1}(-(b-e)) \to p_*T_Y(-H) \to 0    
\end{equation}

and 

\begin{equation}\label{22}
   0 \to R^1p_*T_Y(-H) \to \mathcal{O}_{\mathbb{P}^1}(-(b-e-2)) \to 0 
\end{equation}

%Now pushing forward the sequence \ref{19}, we have that $R^1p_*(T_Y(-H)) = \mathcal{O}_{\mathbb{P}^1}(-(b-e-2)) $ and we have an isomorphism $p_*(T_Y(-H)) = \mathcal{O}_{\mathbb{P}^1}(-(b-e))$. 
Now pushing forward the exact sequence \ref{18}, we have that 

\begin{multline}\label{23}
0 \to \mathcal{O}_{\mathbb{P}^1}(-(b-e)) = p_*T_Y(-H) = p_*(T_{Y/\mathbb{P}^1}(-H)) \to p_*(T_{\mathbb{P}^M}|_Y(-H)) \to p_*(N_{Y/\mathbb{P}^M}(-H)) \\ \xrightarrow[]{\lambda} \mathcal{O}_{\mathbb{P}^1}(-(b-e-2)) = p_*(-H-K_Y) \to 0
\end{multline}

We now end the proof of the claim by showing that for a general ribbon $\widetilde{Y}$, which is defined by an injection $0 \to -K_Y \to N_{Y/\mathbb{P}^M}$, the map $\lambda$ is a section to the injective map induced by the pushforward after twisting by $-H$. 

$$p_*(-H-K_Y) \to p_*(N_{Y/\mathbb{P}^M}(-H))$$

So the question is can we choose a ribbon or equivalently, an injection $0 \to -K_Y \to N_{Y/\mathbb{P}^M}$, such that the induced map $\beta$,  $$0 \to p_*(-H-K_Y) = \mathcal{O}_{\mathbb{P}^1}(-(b-e-2)) \xrightarrow[]{\beta} p_*(N_{Y/\mathbb{P}^M}(-H))$$

which sits in the following diagram

\begin{tikzcd}\label{section}
  &&& 0 \arrow[d] && \\
  &&& p_*(-H-K_Y) \arrow[d, "\beta"] && \\
  0 \arrow[r] & \mathcal{O}_{\mathbb{P}^1}(-(b-e)) \arrow[r] &  p_*(T_{\mathbb{P}^M}|_Y(-H)) \arrow[r, "\delta"] & p_*(N_{Y/\mathbb{P}^M}(-H)) \arrow[r, "\lambda"] & p_*(-H-K_Y) \arrow[r] & 0  
\end{tikzcd}

\smallskip 

satisfies $\lambda \circ \beta = \operatorname{id}$. It is enough to choose $\beta$ such that $\lambda \circ \beta$  is an isomorphism and since $p_*(-H-K_Y) = \mathcal{O}_{\mathbb{P}^1}(-(b-e-2))$ is a line bundle, it is enough to choose $\beta$ such that $\lambda \circ \beta$ is non-zero. This happens in particular if $\operatorname{Im}(\beta)$ is not contained in $\operatorname{Ker}(\lambda)$. So we need to show that $\beta$ does not factor as 

\[
\begin{tikzcd}
   & & p_*(-H-K_Y) \arrow[d, "\beta"] \arrow[dl] \\
   0 \arrow[r] & \operatorname{Ker}(\lambda) \arrow[r] & p_*(N_{Y/\mathbb{P}^M}(-H))
\end{tikzcd}
\]

It is enough to show that 

\begin{equation}\label{32}
    \operatorname{Hom}(p_*(-H-K_Y), \operatorname{Ker}(\lambda)) \to \operatorname{Hom}(p_*(-H-K_Y), p_*(N_{Y/\mathbb{P}^M}(-H)) 
\end{equation}

does not surject. To see this note that (by sequence \ref{23} and diagram \ref{section}),  we have an exact sequence

$$0 \to \mathcal{O}_{\mathbb{P}^1}(-(b-e)) = p_*T_{Y/\mathbb{P}^1}(-H) \to p_*(T_{\mathbb{P}^M}|_Y(-H)) \to \operatorname{Ker}(\lambda) \to 0$$

Applying the functor $\operatorname{Hom}(p_*(-H-K_Y),-)$ we get 

\begin{multline}\label{30}
0 \to \operatorname{Hom}(p_*(-H-K_Y), 
 p_*T_{Y/\mathbb{P}^1}(-H))  \to \operatorname{Hom}(p_*(-H-K_Y), p_*(T_{\mathbb{P}^M}|_Y(-H))) \\ \to \operatorname{Hom}(p_*(-H-K_Y),\operatorname{Ker}(\lambda)) \to \operatorname{Ext^1}(p_*(-H-K_Y), 
 p_*T_{Y/\mathbb{P}^1}(-H)) \\ \xrightarrow[]{\eta} \operatorname{Ext^1}(p_*(-H-K_Y), 
 p_*(T_{\mathbb{P}^M}|_Y(-H))
\end{multline}

%Since $p_*(-H-K_Y) = \mathcal{O}_{\mathbb{P}^1}(-(b-e-2))$, we have that $\operatorname{Hom}(\mathcal{O}_{\mathbb{P}^1}(-(b-e-2)), 
 %\mathcal{O}_{\mathbb{P}^1}(-(b-e))) = 0$ and $\operatorname{Ext^1}(\mathcal{O}_{\mathbb{P}^1}(-(b-e-2)), 
% \mathcal{O}_{\mathbb{P}^1}(-(b-e))) = $

We show that the map $\operatorname{Ext^1}(p_*(-H-K_Y), 
 p_*T_{Y/\mathbb{P}^1}(-H))  \xrightarrow[]{\eta} \operatorname{Ext^1}(p_*(-H-K_Y), 
 p_*(T_{\mathbb{P}^M}|_Y(-H))$ is an injective map and is in fact an isomorphism. To see this, push forward sequence \ref{24}, to get 

 \begin{equation}\label{25}
     0 \to \mathcal{O}_{\mathbb{P}^1}^{M+1} \to p_*(T_{\mathbb{P}^M}|_Y(-H)) \to R^1p_*(-H) \to 0
 \end{equation}

Note that $\operatorname{Ext^i}(p_*(-H-K_Y), \mathcal{O}_{\mathbb{P}^1}^{M+1}) = H^i(\mathcal{O}_{\mathbb{P}^1}(b-e-2))^{\oplus M+1} = 0$ for $i = 1,2$ if $b \geq e+1$. Hence applying the functor $\operatorname{Hom}(p_*(-H-K_Y), -)$ we get 

\begin{equation}\label{26}
     0 \to \operatorname{Ext^1}(p_*(-H-K_Y), 
 p_*(T_{\mathbb{P}^M}|_Y(-H))) \to \operatorname{Ext^1}(p_*(-H-K_Y), 
 R^1p_*(-H)) \to 0
 \end{equation}

Composing the map $\eta$ with the isomorphism in sequence \ref{25}, we see that it is enough to show that the following map $\theta$ 

\begin{equation}\label{27}
   \operatorname{Ext^1}(p_*(-H-K_Y), 
 p_*T_{Y/\mathbb{P}^1}(-H))  \xrightarrow[]{\theta} \operatorname{Ext^1}(p_*(-H-K_Y), 
 R^1p_*(-H)) 
\end{equation}

is an isomorphism. To see this consider the relative Euler sequence corresponding to the projective bundle $Y \to \mathbb{P}^1$, twisted by $-H$ 

\begin{equation}\label{28}
0 \to \mathcal{O}_Y(-H) \to p^*(\mathcal{O}_{\mathbb{P}^1} \oplus \mathcal{O}_{\mathbb{P}^1}(-e))^* \otimes (C_0-H) \to T_{Y/\mathbb{P}^1}(-H) \to 0
\end{equation}

Pushing forward the above exact sequence, we have that 

\begin{equation}\label{29}
 0 \to p_*T_{Y/\mathbb{P}^1}(-H) \to R^1p_*(-H) \to 0 
\end{equation}

Now the map in sequence \ref{27} is induced by the isomorphism from sequence \ref{29} is hence an isomorphism. Going back to sequence \ref{30}, we have 

\begin{multline}\label{31}
0 \to \operatorname{Hom}(p_*(-H-K_Y), 
 p_*T_{Y/\mathbb{P}^1}(-H))  \to \operatorname{Hom}(p_*(-H-K_Y), p_*(T_{\mathbb{P}^M}|_Y(-H))) \\ \to \operatorname{Hom}(p_*(-H-K_Y),\operatorname{Ker}(\lambda)) \to 0
\end{multline}

Therefore, to show that the map in \ref{32} does not surject, it is enough to show that

\begin{equation}\label{33}
  \operatorname{Hom}(p_*(-H-K_Y), p_*(T_{\mathbb{P}^M}|_Y(-H)) \to \operatorname{Hom}(p_*(-H-K_Y), p_*(N_{Y/\mathbb{P}^M}(-H))  
\end{equation}

is not surjective. Since $p^*p_*(-H-K_Y) = -H-K_Y$, by the projection formula, this is the same as showing 

\begin{equation}\label{34}
   H^0(T_{\mathbb{P}^M}|_Y(K_Y)) \to H^0(N_{Y/\mathbb{P}^M}(K_Y)) 
\end{equation}
does not surject

Now by taking the cohomology of the exact sequence 

$$0 \to T_Y \otimes K_Y \to T_{\mathbb{P}^M|_Y} \otimes K_Y \to N_{Y/\mathbb{P}^M}(K_Y) \to 0$$

we have 

$$0 \to H^0(T_Y \otimes K_Y) \to H^0(T_{\mathbb{P}^M|_Y} \otimes K_Y) \to H^0(N_{Y/\mathbb{P}^M}(K_Y)) \to H^1(T_Y \otimes K_Y) \to H^1(T_{\mathbb{P}^M|_Y} \otimes K_Y)$$

Now the required non-surjection of sequence \ref{34} follows from the fact that $H^1(T_Y \otimes K_Y) = \mathbb{C}^2$ and $H^1(T_{\mathbb{P}^M|_Y} \otimes K_Y) = \mathbb{C}$. \QEDB

In the following lemmas we find out for what values of $a, b, e$, the estimates $\beta$ and $\alpha$ in \Cref{ZL for K3 carpets} are zero. The cohomology of the line bundles in Lemmas \ref{2C_0+ef-H}, \ref{2f-H}, \ref{-H-K_Y}, \ref{-H-2K_Y} follow by pushing forward onto $\mathbb{P}^1$ after possibly using Serre duality on $Y$ (see \cite[III, Ex $8.4$]{Hartshorne}).

\begin{lemma}\label{2C_0+ef-H}
Let $H = aC_0+bf$ be a line bundle on $\mathbb{F}_e$ with $a \geq 1$. Then 
\begin{enumerate}
    \item[(1)] $H^1(2C_0+ef-H) = 0$ if 
\[
\begin{cases}
   a = 1, & b \leq 1  \\
   a = 2, & b \leq e+1 \\
   a = 3 & \\
   a \geq 4, & b \geq ae-2e+1 \\
\end{cases}
\]

\item[(2)] $H^0(2C_0+ef-H) = 0$ if 
\[
\begin{cases}
   a = 1, & b \geq e+1  \\
   a = 2, & b \geq e+1 \\
   a \geq 4 & \\
\end{cases}
\] 

\end{enumerate}

\end{lemma}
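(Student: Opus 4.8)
The plan is to reduce every vanishing statement to cohomology of line bundles on $\mathbb{P}^1$ via the ruling $p \colon \mathbb{F}_e \to \mathbb{P}^1$, exactly as suggested by the reference \cite[III, Ex.~8.4]{Hartshorne}. Writing $H = aC_0 + bf$, the line bundle under consideration is
\[
2C_0 + ef - H = (2-a)C_0 + (e-b)f,
\]
so I would set $m = 2-a$, $n = e-b$ and study $\mathcal{L} = \mathcal{O}_{\mathbb{F}_e}(mC_0 + nf)$. Since $p$ is a $\mathbb{P}^1$-bundle over the curve $\mathbb{P}^1$, the Leray spectral sequence degenerates at $E_2$ (the base is one-dimensional and $R^{\geq 2}p_* = 0$), giving
\[
H^i(\mathbb{F}_e, \mathcal{L}) \;=\; H^i(\mathbb{P}^1, p_*\mathcal{L}) \,\oplus\, H^{i-1}(\mathbb{P}^1, R^1p_*\mathcal{L}).
\]
It then suffices to compute $p_*\mathcal{L}$ and $R^1p_*\mathcal{L}$ and to remember that a line bundle on $\mathbb{P}^1$ has vanishing $H^0$ iff its degree is negative and vanishing $H^1$ iff its degree is $\geq -1$.

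First I would record the direct-image formulas for $\mathbb{F}_e = \mathbb{P}(\mathcal{O} \oplus \mathcal{O}(-e))$: for $m \geq 0$ one has $p_*\mathcal{O}(mC_0 + nf) = \bigoplus_{i=0}^{m}\mathcal{O}_{\mathbb{P}^1}(n - ie)$ and $R^1p_* = 0$; for $m = -1$ both direct images vanish; and for $m \leq -2$ one has $p_* = 0$ with $R^1p_*$ governed by relative duality, using $\omega_{\mathbb{F}_e/\mathbb{P}^1} = -2C_0 - ef$. With these in hand the four ranges of $a$ are handled in turn. For $a = 1$ (so $m = 1$) and $a = 2$ (so $m = 0$) the higher direct image vanishes and $p_*\mathcal{L}$ equals $\mathcal{O}(e-b)\oplus\mathcal{O}(-b)$, respectively $\mathcal{O}(e-b)$; the stated inequalities are precisely the degree thresholds for the $H^0$ and $H^1$ of these bundles on $\mathbb{P}^1$ to vanish, where I use $e \geq 0$ to see that the binding condition is $b \leq 1$ (resp.\ $b \geq e+1$) for $a=1$. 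For $a = 3$ (so $m = -1$) both direct images vanish, so $H^1 = 0$ with no hypothesis on $b$.

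For $a \geq 4$ (so $m \leq -2$) the case splits. The vanishing $p_*\mathcal{L} = 0$ forces $H^0(2C_0+ef-H) = 0$ unconditionally, which is the content of part $(2)$ there. For $H^1$ I would instead pass through Serre duality on $Y$, which converts the negative $C_0$-coefficient into a nonnegative one:
\[
H^1(2C_0 + ef - H) \;\cong\; H^1\!\big((a-4)C_0 + (b-2e-2)f\big)^{\vee},
\]
the twist on the right being $\omega_Y \otimes (2C_0+ef-H)^{\vee}$. Since $a - 4 \geq 0$, its pushforward is $\bigoplus_{i=0}^{a-4}\mathcal{O}_{\mathbb{P}^1}(b - 2e - 2 - ie)$ with no $R^1p_*$, and its $H^1$ vanishes exactly when the most negative summand, at $i = a-4$, has degree $\geq -1$, i.e.\ when $b \geq (a-2)e + 1 = ae - 2e + 1$.

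The computation is entirely mechanical once the pushforward formulas are fixed; the only place needing care is the $a \geq 4$ case, where I must route through Serre duality and then correctly isolate the summand $\mathcal{O}_{\mathbb{P}^1}(b - 2e - 2 - (a-4)e)$ that controls the vanishing. That bookkeeping — the range of $i$ together with the identification of $\omega_{Y/\mathbb{P}^1}$ — is where an off-by-one would distort the threshold, so I expect this to be the main (and essentially the only) obstacle, and I would cross-check it against a small explicit value of $(a,b,e)$ before declaring the bound $b \geq ae - 2e + 1$ final.
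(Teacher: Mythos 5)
Your proof is correct and follows exactly the route the paper indicates (its entire proof is the single sentence that these vanishings ``follow by pushing forward onto $\mathbb{P}^1$ after possibly using Serre duality on $Y$''): compute $p_*$ and $R^1p_*$ along the ruling and read off degree thresholds on $\mathbb{P}^1$, passing through Serre duality when the $C_0$-coefficient is $\leq -2$. All of your direct-image formulas and the resulting thresholds check out, including the bound $b \geq ae-2e+1$ for $a \geq 4$ obtained from the summand $\mathcal{O}_{\mathbb{P}^1}(b-2e-2-(a-4)e)$.
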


\begin{lemma}\label{2f-H}
Let $H = aC_0+bf$ be a line bundle on $\mathbb{F}_e$ with $a \geq 1$. Then 
\begin{enumerate}
    \item[(1)] $H^1(2f-H) = 0$ if 
\[
\begin{cases}
   a = 1 &   \\
   a \geq 2, & b \geq ae-e+3 \\
   
\end{cases}
\]

\item[(2)] $H^0(2f-H) = 0$

\end{enumerate}  

\end{lemma}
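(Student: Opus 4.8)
# Proof Proposal for Lemma \ref{2f-H}

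The plan is to compute the cohomology of the line bundle $2f - H = -aC_0 + (2-b)f$ on the Hirzebruch surface $\mathbb{F}_e$ by pushing forward along the projection $p \colon \mathbb{F}_e \to \mathbb{P}^1$, exactly as indicated in the parenthetical remark before the lemma. The key structural fact is that $R^ip_*(\mathcal{O}_{\mathbb{F}_e}(cC_0))$ is governed by the symmetric powers of $\mathcal{O}_{\mathbb{P}^1} \oplus \mathcal{O}_{\mathbb{P}^1}(-e)$: for $c \geq 0$ one has $p_*(\mathcal{O}(cC_0)) = \operatorname{Sym}^c(\mathcal{O} \oplus \mathcal{O}(-e))$ with $R^1p_* = 0$, while for $c \leq -2$ the pushforward $p_*$ vanishes and $R^1p_*$ is the Serre-dual bundle (twisted by the relative dualizing sheaf $\mathcal{O}(-2C_0-ef)$), and for $c = -1$ both $p_*$ and $R^1p_*$ vanish.

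For part $(2)$, I would show $H^0(2f-H) = 0$ unconditionally. Since $a \geq 1$, the fiber coefficient of $C_0$ in $2f - H = -aC_0 + (2-b)f$ is strictly negative. First I would observe that a nonzero global section would restrict nontrivially to some fiber, but $\mathcal{O}(2f-H)$ restricted to a fiber $f$ has degree $-a < 0$, so it has no sections on any fiber; hence $H^0(2f-H) = 0$ follows immediately from the fact that a section vanishing on every fiber is zero. This disposes of part $(2)$ with essentially no case analysis.

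For part $(1)$, I would use the Leray spectral sequence, so that $H^1(2f-H)$ fits into the exact sequence coming from $H^1(\mathbb{P}^1, p_*(2f-H))$ and $H^0(\mathbb{P}^1, R^1p_*(2f-H))$. When $a = 1$, the class $2f-H = -C_0 + (2-b)f$ has $C_0$-coefficient $-1$, so both $p_*$ and $R^1p_*$ of the line bundle vanish by the $c=-1$ computation above, forcing $H^1 = 0$ unconditionally — this explains the unrestricted $a=1$ row. When $a \geq 2$, the coefficient is $\leq -2$, so $p_* = 0$ and the computation reduces to $H^1(2f-H) = H^0(\mathbb{P}^1, R^1p_*(2f-H))$; by relative Serre duality $R^1p_*(-aC_0+(2-b)f)$ is a direct sum of line bundles on $\mathbb{P}^1$ whose degrees I would read off from $\operatorname{Sym}^{a-2}(\mathcal{O}\oplus\mathcal{O}(-e))$ twisted appropriately by the fiber degree $(2-b)$ and the relative canonical contribution. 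The vanishing $H^0$ of this bundle then becomes the numerical condition that every summand has negative degree, which I expect to translate precisely into the stated bound $b \geq ae - e + 3$.

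The only genuinely delicate step is keeping the twisting bookkeeping in $R^1p_*$ correct — tracking how the $\operatorname{Sym}^{a-2}$ term, the fiber twist $(2-b)f$, and the relative dualizing sheaf $\omega_{\mathbb{F}_e/\mathbb{P}^1} = \mathcal{O}(-2C_0 - ef)$ combine to give the degrees of the summands on $\mathbb{P}^1$. Once the largest summand degree is identified, the condition for its vanishing is a single linear inequality in $a, b, e$, and I expect it to match $b \geq ae - e + 3$ after a short computation. No conceptual obstacle arises; the argument is a routine, if careful, application of the projection-formula and Serre-duality descriptions of $R^ip_*$ on a ruled surface.
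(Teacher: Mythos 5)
Your proposal is correct and follows exactly the route the paper indicates for this lemma (pushforward to $\mathbb{P}^1$ via Leray, with relative Serre duality for the $R^1p_*$ term); the bookkeeping you defer does work out, since $R^1p_*(2f-H)=\bigoplus_{j=1}^{a-1}\mathcal{O}_{\mathbb{P}^1}(je+2-b)$ for $a\geq 2$, whose $H^0$ vanishes precisely when $b\geq (a-1)e+3=ae-e+3$. The $a=1$ case and part $(2)$ are handled as you describe.
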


\begin{lemma}\label{-H-K_Y}
Let $H = aC_0+bf$ be a line bundle on $\mathbb{F}_e$ with $a \geq 1$ and $b \geq ae+1$. Then $H^1(-H+K_{F_e}) = 0$. Further, $H^1(-H-K_{F_e}) = 0$ if 
 \[
\begin{cases}
   a = 1, & b \leq 3  \\
   a = 2, & b \leq 3+e \\
   a = 3 & \\
   a \geq 4, & b \geq ae-2e+3 \\
\end{cases}
 \]

\end{lemma}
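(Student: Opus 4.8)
The plan is to compute every cohomology group by pushing forward along the ruling $p \colon \mathbb{F}_e \to \mathbb{P}^1$ and reducing each vanishing statement to the elementary fact that $H^1(\mathbb{P}^1, \mathcal{O}_{\mathbb{P}^1}(d)) = 0$ exactly when $d \geq -1$. I would record the standard formulas for $\mathbb{F}_e = \mathbb{P}(\mathcal{O}_{\mathbb{P}^1}\oplus\mathcal{O}_{\mathbb{P}^1}(-e))$ with $C_0^2 = -e$, namely $p_*\mathcal{O}_{\mathbb{F}_e}(aC_0+bf) = \bigoplus_{i=0}^{a}\mathcal{O}_{\mathbb{P}^1}(b-ie)$ and $R^1p_*\mathcal{O}_{\mathbb{F}_e}(aC_0+bf) = 0$ for $a \geq -1$, together with the vanishing of \emph{both} $p_*$ and $R^1p_*$ of $\mathcal{O}_{\mathbb{F}_e}(-C_0+kf)$ (the $a=-1$ case). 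Since the base is a curve, $H^2(\mathbb{P}^1,-)=0$ and the Leray spectral sequence gives a short exact sequence $0 \to H^1(\mathbb{P}^1, p_*\mathcal{L}) \to H^1(\mathbb{F}_e,\mathcal{L}) \to H^0(\mathbb{P}^1, R^1p_*\mathcal{L}) \to 0$; as $R^1p_*\mathcal{L}$ will vanish for every bundle below, this yields $H^1(\mathbb{F}_e,\mathcal{L}) \cong H^1(\mathbb{P}^1, p_*\mathcal{L})$ throughout.

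First I would dispatch $H^1(-H+K_{\mathbb{F}_e}) = 0$. By Serre duality on the surface this group is dual to $H^1(\mathbb{F}_e, H)$, and with $H = aC_0+bf$ the pushforward gives $H^1(\mathbb{F}_e, H) = \bigoplus_{i=0}^{a} H^1(\mathbb{P}^1, \mathcal{O}_{\mathbb{P}^1}(b-ie))$. Every summand has degree at least $b - ae \geq 1 \geq -1$ by the standing hypothesis $b \geq ae+1$, so the group vanishes; this part requires no case analysis.

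For the second vanishing I would write $-H-K_{\mathbb{F}_e} = (2-a)C_0 + (e+2-b)f$ and split on $a$, which governs the sign of the coefficient $2-a$ of $C_0$. For $a=2$ the bundle is $p^*\mathcal{O}_{\mathbb{P}^1}(e+2-b)$, whose $H^1$ vanishes iff $e+2-b \geq -1$, i.e. $b \leq e+3$. For $a=1$ the pushforward is $\mathcal{O}_{\mathbb{P}^1}(e+2-b)\oplus\mathcal{O}_{\mathbb{P}^1}(2-b)$ and the binding summand is $\mathcal{O}_{\mathbb{P}^1}(2-b)$, giving $b\leq 3$. For $a=3$ the coefficient is $-1$, so all direct images vanish and $H^1=0$ unconditionally, which is why no constraint on $b$ appears. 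Finally for $a\geq 4$ the coefficient $2-a \leq -2$ blocks a direct pushforward, so I again pass through Serre duality: $H^1(-H-K_{\mathbb{F}_e})$ is dual to $H^1(\mathbb{F}_e, H+2K_{\mathbb{F}_e})$ with $H+2K_{\mathbb{F}_e} = (a-4)C_0 + (b-2e-4)f$, whose $C_0$-coefficient is now $\geq 0$; the pushforward has summands $\mathcal{O}_{\mathbb{P}^1}(b-2e-4-ie)$ for $0 \leq i \leq a-4$, and the tightest inequality comes from $i=a-4$, namely $b+2e-4-ae \geq -1$, i.e. $b \geq ae-2e+3$.

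There is no genuine obstacle here: the computation is routine once the pushforward dictionary is in place. The only points needing care are bookkeeping which summand produces the binding inequality in each regime, and invoking Serre duality in exactly the two situations where the $C_0$-coefficient would otherwise be too negative to handle directly (always for the first bundle, and for $a\geq 4$ for the second). The value $a=3$ is the pivot at which the relevant summand of the pushforward degenerates to zero, which explains why that case imposes no hypothesis on $b$.
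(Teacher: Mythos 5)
Your proof is correct and follows exactly the route the paper indicates for this lemma: the paper gives no detailed argument, stating only that these cohomology groups ``follow by pushing forward onto $\mathbb{P}^1$ after possibly using Serre duality on $Y$,'' which is precisely your pushforward-plus-Leray computation with Serre duality invoked for $-H+K_{\mathbb{F}_e}$ and for the case $a\geq 4$. All the case-by-case inequalities you derive match the statement, so nothing further is needed.
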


\begin{lemma}\label{-H-2K_Y}
Let $H = aC_0+bf$ be a line bundle on $\mathbb{F}_e$ with $a \geq 1$ and $b \geq ae+1$. Then $H^0(-H-2K_{\mathbb{F}_e}) = 0$ if 
\[
\begin{cases}
   a \leq 4 & b \geq 2e+5  \\
   a \geq 5 &   \\
   
\end{cases}
\]
    
\end{lemma}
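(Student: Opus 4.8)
The plan is to rewrite the line bundle $-H-2K_{\mathbb{F}_e}$ explicitly in the basis $C_0,f$ and then read off its sections by pushing forward along the ruling $p\colon\mathbb{F}_e\to\mathbb{P}^1$, exactly as announced in the sentence preceding the lemma. Using the standard canonical class $K_{\mathbb{F}_e}=-2C_0-(e+2)f$ (equivalently the fact, already used in the proof of the Claim, that the $K3$ carpet has conormal bundle $K_Y$ with $-K_Y=2C_0+(e+2)f$), I would first record
\[
-H-2K_{\mathbb{F}_e}=(4-a)C_0+(2e+4-b)f .
\]
The whole lemma then reduces to deciding when the class $\alpha C_0+\beta f$, with $\alpha=4-a$ and $\beta=2e+4-b$, has no global sections, which splits naturally along the sign of $\alpha$, i.e. along $a\geq 5$ versus $a\leq 4$.

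For $a\geq 5$ the coefficient $\alpha=4-a\leq -1$ is negative. Here I would argue directly by intersection theory: the fiber class $f$ is base-point free, hence nef, while $\bigl((4-a)C_0+(2e+4-b)f\bigr)\cdot f=4-a<0$. An effective divisor cannot meet a nef class negatively, so the class is not effective and $H^0=0$. This uses no hypothesis on $b$, which matches the statement, where the case $a\geq 5$ carries no constraint beyond the standing assumption $b\geq ae+1$.

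For $1\leq a\leq 4$ we have $\alpha=4-a\geq 0$, and I would compute cohomology by pushforward. With the normalization $\mathbb{F}_e=\mathbb{P}(\mathcal{O}_{\mathbb{P}^1}\oplus\mathcal{O}_{\mathbb{P}^1}(-e))$ and $\mathcal{O}(C_0)$ the relative $\mathcal{O}(1)$ already in force in the proof above (it is the twist appearing in the relative Euler sequence used there), one gets
\[
p_*\mathcal{O}_{\mathbb{F}_e}\bigl((4-a)C_0+(2e+4-b)f\bigr)=\bigoplus_{i=0}^{4-a}\mathcal{O}_{\mathbb{P}^1}\bigl(2e+4-b-ie\bigr).
\]
Since $H^0(\mathbb{F}_e,L)=H^0(\mathbb{P}^1,p_*L)$ and $e\geq 0$, the summand of largest degree is the $i=0$ term $\mathcal{O}_{\mathbb{P}^1}(2e+4-b)$; all summands have vanishing $H^0$ precisely when $2e+4-b\leq -1$, that is $b\geq 2e+5$. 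This yields the stated bound and shows it is the sharp one in this range.

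I do not expect a genuine obstacle here: this is a routine line-bundle cohomology computation, and the preceding remark signals that pushforward to $\mathbb{P}^1$ is the intended mechanism. The only points that require care are the consistency of the Hirzebruch-surface conventions (so that $\mathcal{O}(C_0)$ is the relative $\mathcal{O}(1)$ giving the $\mathrm{Sym}^{4-a}$ decomposition above), and the observation that for $a\leq 4$ the threshold $b\geq 2e+5$ is governed entirely by the $i=0$ summand because $e\geq 0$, whereas for $a\geq 5$ the vanishing is forced by the sign of the $C_0$-coefficient alone and is therefore independent of $b$.
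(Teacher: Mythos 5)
Your proof is correct and follows exactly the route the paper intends: the paper gives no written proof for this lemma, only the remark that it "follows by pushing forward onto $\mathbb{P}^1$," which is precisely your computation of $-H-2K_{\mathbb{F}_e}=(4-a)C_0+(2e+4-b)f$ and of $p_*$ as $\mathrm{Sym}^{4-a}(\mathcal{O}_{\mathbb{P}^1}\oplus\mathcal{O}_{\mathbb{P}^1}(-e))\otimes\mathcal{O}_{\mathbb{P}^1}(2e+4-b)$. The case split on the sign of $4-a$ and the resulting threshold $b\geq 2e+5$ match the statement, and your observation that the bound is sharp for $a\leq 4$ is a correct bonus.
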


\begin{lemma}\label{T_Y(-H)}
Let $H = aC_0+bf$ be a line bundle on $\mathbb{F}_e$ with $a \geq 1$ and $b \geq ae+1$. Then 
\begin{enumerate}
    \item[(1)] $H^1(T_{\mathbb{F}_e}(-H)) = 0$ if 
\[
\begin{cases}
   a = 1, & b \leq 1  \\
   a = 2, & ae-e+3 \leq b \leq e+1  \\
   a = 3, & b \geq ae-e+3 \\
   a \geq 4, & b \geq \operatorname{max}\{ae-2e+3, ae-e+3\}
   
\end{cases}
\]

  \item[(2)] $H^1(T_{\mathbb{F}_e}(-H+K_{\mathbb{F}_e})) = 0$ if 
\[
\begin{cases}
   a = 1, & b \geq e+1  \\
   a \geq 2, & b \geq ae+1 \\
\end{cases}
\] 

\item[(3)] $H^0(T_{\mathbb{F}_e}(-H)) = 0$ if 
\[
\begin{cases}
   a = 1, & b \geq e+1  \\
   a = 2, & b \geq e+1 \\
   a \geq 3 & \\
\end{cases}
\] 

\end{enumerate}

\end{lemma}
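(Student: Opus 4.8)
The plan is to reduce every vanishing to the cohomology of line bundles on $Y=\mathbb{F}_e$ by means of the relative tangent (Euler) sequence of the ruling $p\colon \mathbb{F}_e \to \mathbb{P}^1$,
$$0 \to T_{Y/\mathbb{P}^1} \to T_Y \to p^*T_{\mathbb{P}^1} \to 0,$$
in which $T_{Y/\mathbb{P}^1} \cong \mathcal{O}_Y(2C_0+ef)$ (as already recorded in the proof of \Cref{ZL for K3 carpets}) and $p^*T_{\mathbb{P}^1} \cong \mathcal{O}_Y(2f)$. Twisting this sequence by a line bundle $\mathcal{L}$ and passing to the long exact sequence traps $H^i(T_Y \otimes \mathcal{L})$ between $H^i(\mathcal{O}_Y(2C_0+ef)\otimes\mathcal{L})$ and $H^i(\mathcal{O}_Y(2f)\otimes\mathcal{L})$, so it suffices to make the two end terms vanish.

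For parts $(1)$ and $(3)$ I would take $\mathcal{L}=\mathcal{O}_Y(-H)$, so that the two ends are exactly $\mathcal{O}_Y(2C_0+ef-H)$ and $\mathcal{O}_Y(2f-H)$, whose cohomology is computed in \Cref{2C_0+ef-H} and \Cref{2f-H}. For $(1)$, $H^1(T_Y(-H))$ is squeezed between $H^1(2C_0+ef-H)$ and $H^1(2f-H)$; intersecting the numerical ranges of those two lemmas and using $\max\{ae-2e+3,\,ae-e+3\}=ae-e+3$ for $e\geq 0$ recovers exactly the stated conditions (with the $a=2$ case turning out to be vacuous). For $(3)$, the relevant piece reads $0 \to H^0(2C_0+ef-H)\to H^0(T_Y(-H)) \to H^0(2f-H)$; since $H^0(2f-H)=0$ always by \Cref{2f-H}$(2)$, it remains to see $H^0(2C_0+ef-H)=0$, which holds in the listed range: for $a\geq 3$ this is immediate because $2C_0+ef-H=(2-a)C_0+(e-b)f$ has negative $C_0$-degree and hence no sections (so the case $a=3$, not separately listed in \Cref{2C_0+ef-H}$(2)$, is covered directly), while $a=1,2$ follow from \Cref{2C_0+ef-H}$(2)$.

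Part $(2)$ is the delicate one, since the twist $\mathcal{L}=\mathcal{O}_Y(-H+K_Y)$ (with $K_Y=-2C_0-(e+2)f$) produces end terms $\mathcal{O}_Y(-aC_0-(b+2)f)$ and $\mathcal{O}_Y(-(a+2)C_0-(b+e)f)$ that are not among the bundles handled by the earlier lemmas. Here I would compute their $H^1$ by hand: by Serre duality on $Y$ they are dual to $H^1((a-2)C_0+(b-e)f)$ and $H^1(aC_0+(b-2)f)$ respectively, and both of these vanish under the hypotheses $b\geq e+1$ (for $a=1$) and $b\geq ae+1$ (for $a\geq 2$), as one checks by pushing forward to $\mathbb{P}^1$ and inspecting the degrees of the summands of $p_*$. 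The long exact sequence then yields $H^1(T_Y(-H+K_Y))=0$ in the stated regimes.

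The main obstacle is precisely part $(2)$: the earlier line-bundle lemmas do not apply, so one must run the Serre-duality-plus-pushforward computation directly and verify that the two resulting numerical conditions on $(a,b,e)$ collapse to exactly the claimed dichotomy $a=1$ versus $a\geq 2$. A secondary bookkeeping point is to confirm that the two independent ranges coming from the two end terms in parts $(1)$ and $(3)$ intersect to give precisely the stated hypotheses, including the vacuous $a=2$ case in $(1)$ and the unlisted $a=3$ case in $(3)$.
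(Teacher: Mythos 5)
Your proposal is correct and follows the same route as the paper: decompose $T_{\mathbb{F}_e}$ via the relative tangent sequence $0 \to T_{Y/\mathbb{P}^1} \to T_Y \to p^*T_{\mathbb{P}^1} \to 0$ with $T_{Y/\mathbb{P}^1} = 2C_0+ef$ and $p^*T_{\mathbb{P}^1} = 2f$, and reduce everything to line-bundle cohomology. The paper's proof is a single line that only explicitly treats part $(1)$ by combining Lemmas \ref{2C_0+ef-H} and \ref{2f-H}; your extra work on part $(2)$ (Serre duality plus pushforward for the twists by $-H+K_Y$, which the earlier lemmas do not cover) and on the $a=3$ case of part $(3)$ (negative $C_0$-degree forces $H^0=0$) correctly supplies details the paper leaves implicit, and your numerical bookkeeping (including the vacuous $a=2$ case in $(1)$) checks out.
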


\noindent\textit{Proof.} Let $Y = \mathbb{F}_e$ and let $p: Y \to \mathbb{P}^1$ denote the projection to $\mathbb{P}^1$. Note that we have an exact sequence 
$$0 \to T_{Y/\mathbb{P}^1} \to T_Y \to p^*T_{\mathbb{P}^1} \to 0$$
and that $T_{Y/\mathbb{P}^1} = 2C_0+ef$ and $p^*T_{\mathbb{P}^1} = 2f$. So it is enough to show $H^1(2C_0+ef-H) = H^1(2f-H) = 0$. Now the claim follows by combining Lemmas \ref{2C_0+ef-H} and \ref{2f-H}. \QEDA

\begin{proposition}\label{calculating beta}
%Let $V = Y_1 \bigcup_E Y_2 \hookrightarrow \mathbb{P}^M$ be a simple normal crossing variety of dimension $d > 1$, embedded by the complete linear series of a very ample line bundle $\widetilde{H}$, where $Y_1 = \mathbb{F}_e \hookrightarrow \mathbb{P}^N$, $e \leq 2$ is embedded by the complete linear series of a very ample line bundle $|aC_0+bf|$, $E \in |-K_{Y_1}|$, $Y_2$ is a general element of $\mathcal{F}_{E,Y}$ and $M = N + h^0((a-2)C_0+(b-e-2)f)$ (i.e, we are in the situation of Theorem \ref{Hirzebruch main}, $(1)$ (b). Then $\alpha(V) = \beta = 0$ if 
Let $\widetilde{Y} \hookrightarrow \mathbb{P}^M$ be a $K3$ carpet embedded by the complete linear series of a very ample line bundle $\widetilde{H}$ such that the possibly degenerate embedding $Y = \mathbb{F}_e \hookrightarrow \mathbb{P}^M$ induced on the reduced part is given by the complete linear series of a very ample line bundle $H = aC_0+bf$ followed by a linear embedding of projective spaces.   
Then 

\begin{enumerate}
    \item $\beta = 0$ if
    \[
\begin{cases}
    a = 3, b \geq 5 & e = 0  \\
    a = 3, b \geq 7 & e = 1  \\
    a = 3, b \geq 9 & e = 2  \\
\color{black}
    a = 3, b \geq 11 & e \geq 3 \\
    a = 3, b \geq 3e+1 & e \geq 4 \\
\color{black}
    a = 4, b \geq 5 & e = 0 \\
    a \geq 5, b \geq 3 & e = 0 \\
    a \geq 4, b \geq a+2 & e = 1 \\
    a \geq 4, b \geq ae+1 & e \geq 2 \\
    
\end{cases}
  \]

  \item If $a = 2$, then $\gamma = 0$ if $b \geq 2e+5$

 % \item If $a = 1$, then $\alpha(\widetilde{Y}) = 0$ if $b \geq 2e+5$

\end{enumerate}

\end{proposition}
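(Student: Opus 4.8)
The plan is to evaluate $\beta$ and $\gamma$ of \Cref{ZL for K3 carpets} term by term, feeding in the cohomology vanishing ranges established in the preceding lemmas, and then in each regime of $(a,e)$ to intersect the resulting constraints on $b$ and retain the most restrictive one. The whole statement is a bookkeeping exercise once those ranges are granted, except for one genuinely new input needed in part $(2)$.

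For part $(1)$, recall $\beta = h^1(T_Y(-H+K_Y)) + h^1(T_Y(-H)) - h^0(T_Y(-H)) + h^1(-H-K_Y) + h^0(-H-2K_Y)$. Every row of the table has $a \geq 3$, so \Cref{T_Y(-H)}$(3)$ gives $h^0(T_Y(-H)) = 0$; the subtracted term disappears and $\beta$ becomes a sum of four nonnegative integers. Hence $\beta = 0$ exactly when $h^1(T_Y(-H+K_Y))$, $h^1(T_Y(-H))$, $h^1(-H-K_Y)$ and $h^0(-H-2K_Y)$ all vanish, and I would read their vanishing ranges off \Cref{T_Y(-H)}$(2)$, \Cref{T_Y(-H)}$(1)$, \Cref{-H-K_Y} and \Cref{-H-2K_Y} respectively. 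For example, when $a = 3$ the bound $b \geq 2e+5$ coming from $h^0(-H-2K_Y) = 0$ dominates for small $e$ (yielding $b \geq 5,7,9,11$ for $e = 0,1,2,3$), while the very-ampleness bound $b \geq 3e+1$ takes over once $e \geq 4$; when $a \geq 5$ the term $h^0(-H-2K_Y)$ vanishes identically, so the binding constraint migrates to $h^1(T_Y(-H)) = 0$, namely $b \geq ae-e+3$. The remaining rows are handled the same way.

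For part $(2)$ the route through $\beta$ is unavailable: for $a = 2$ the admissible interval $ae-e+3 \leq b \leq e+1$ of \Cref{T_Y(-H)}$(1)$ is empty, so $h^1(T_Y(-H)) \neq 0$. Instead I would use $\gamma = h^0(N_{Y/\mathbb{P}^M}(-H+K_Y)) + h^0(N_{Y/\mathbb{P}^M}(-H)) + h^0(-H-2K_Y) - M - 1$. For $b \geq 2e+5$, \Cref{-H-2K_Y} kills the third term, while \eqref{14} together with \Cref{T_Y(-H)}$(2)$ (applicable since $b \geq 2e+5 > 2e+1 = ae+1$) forces $h^0(N_{Y/\mathbb{P}^M}(-H+K_Y)) = 0$. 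Thus $\gamma = h^0(N_{Y/\mathbb{P}^M}(-H)) - M - 1$, and the whole claim collapses to the single equality $h^0(N_{Y/\mathbb{P}^M}(-H)) = M+1$.

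I expect this equality to be the main obstacle. From the normal bundle sequence \eqref{18} twisted by $-H$ and the Euler sequence \eqref{24}, using $h^0(T_Y(-H)) = 0$ from \Cref{T_Y(-H)}$(3)$ and $h^0(T_{\mathbb{P}^M}|_Y(-H)) = M+1$ (which holds because $h^0(\mathcal{O}_Y(-H)) = h^1(\mathcal{O}_Y(-H)) = 0$ once $b \geq 2e+5$), one gets $h^0(N_{Y/\mathbb{P}^M}(-H)) = M+1+\dim\ker\psi$, where $\psi\colon H^1(T_Y(-H)) \to H^1(T_{\mathbb{P}^M}|_Y(-H))$ is the connecting map; the task is to show $\psi$ is injective. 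The plan is to descend to $\mathbb{P}^1$ along $p\colon Y \to \mathbb{P}^1$: since $R^1p_*(T_{\mathbb{P}^M}|_Y(-H)) = 0$ and, for $b > e+2$, the Leray term $H^0(\mathbb{P}^1, R^1p_*T_Y(-H)) = H^0(\mathcal{O}_{\mathbb{P}^1}(-(b-e-2)))$ vanishes, the two Leray sequences identify $\psi$ with the map $H^1(\mathbb{P}^1, p_*T_Y(-H)) \to H^1(\mathbb{P}^1, p_*T_{\mathbb{P}^M}|_Y(-H))$ induced by the injection $p_*T_Y(-H) \hookrightarrow p_*T_{\mathbb{P}^M}|_Y(-H)$ of \eqref{23}. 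Combining $p_*T_Y(-H) = p_*T_{Y/\mathbb{P}^1}(-H)$ from \eqref{21}, the isomorphism $p_*T_{Y/\mathbb{P}^1}(-H) \cong R^1p_*\mathcal{O}_Y(-H)$ from \eqref{29}, the quotient map of \eqref{25}, and $H^1(\mathbb{P}^1,\mathcal{O}_{\mathbb{P}^1}^{M+1}) = 0$, this becomes an isomorphism between two copies of $H^1(\mathbb{P}^1, \mathcal{O}_{\mathbb{P}^1}(-(b-e)))$, each of dimension $b-e-1$, so $\ker\psi = 0$. This is exactly the circle of pushforward computations already carried out in the proof of \Cref{splits}, which I would adapt; the delicate point is to verify that the composite $p_*T_Y(-H) \to p_*T_{\mathbb{P}^M}|_Y(-H) \to R^1p_*\mathcal{O}_Y(-H)$, a map between two copies of $\mathcal{O}_{\mathbb{P}^1}(-(b-e))$, is nonzero and hence an isomorphism.
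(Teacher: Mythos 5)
Your proposal is correct and follows essentially the same route as the paper: part $(1)$ is the same bookkeeping with \Cref{-H-K_Y}, \Cref{-H-2K_Y} and \Cref{T_Y(-H)} (using $h^0(T_Y(-H))=0$ for $a\geq 3$ so that $\beta$ is a sum of nonnegative terms), and part $(2)$ reduces, exactly as in the paper, to $h^0(N_{Y/\mathbb{P}^M}(-H))=M+1$, i.e.\ to the injectivity of $H^1(T_Y(-H)) \to H^1(T_{\mathbb{P}^M}|_Y(-H))$, which you then establish by the same pushforward computations, merely packaged via the Leray sequences on $\mathbb{P}^1$ instead of via the isomorphisms $H^1(T_{Y/\mathbb{P}^1}(-H)) \cong H^1(T_Y(-H))$ and $H^1(T_{\mathbb{P}^M}|_Y(-H)) \cong H^2(-H)$ used in sequences \ref{36}--\ref{41}. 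The one step you flag as delicate --- that the composite $p_*T_{Y/\mathbb{P}^1}(-H) \to p_*(T_{\mathbb{P}^M}|_Y(-H)) \to R^1p_*\mathcal{O}_Y(-H)$ between two copies of $\mathcal{O}_{\mathbb{P}^1}(-(b-e))$ is nonzero --- is disposed of in the paper by recognizing this map (up to the nonzero factor $\deg(H|_f)=a=2$) as the connecting map of the relative Euler sequence \ref{28}, which is an isomorphism by \ref{29}.
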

 
\noindent\textit{Proof $(1)$.} This follows by combining Lemma \ref{-H-K_Y}, \ref{-H-2K_Y}, \ref{T_Y(-H)} and applying Theorem \ref{ZL for K3 carpets}. \\

\noindent\textit{Proof $(2)$.} Note that by Lemma \ref{T_Y(-H)}, $(2)$, $h^1(T_Y(-H+K_Y)) = 0$ if $a = 2$ and $b \geq 2e+1$. Similarly by Lemma \ref{-H-2K_Y}, $h^0(-H-2K_Y) = 0$ if $a = 2$ and $b \geq 2e+5$. Now we show that $h^0(N_{Y/\mathbb{P}^M}(-H)) = M+1$. Twisting the sequence $$0 \to T_Y \to T_{\mathbb{P}^M}|_Y \to N_{Y/\mathbb{P}^M} \to 0$$ by $-H$ and taking cohomology, we have 

\begin{equation}\label{35}
0 \to H^0(T_Y(-H)) \to H^0(T_{\mathbb{P}^M}|_Y(-H)) \to H^0(N_{Y/\mathbb{P}^M}(-H)) \to H^1(T_Y(-H)) \to H^1(T_{\mathbb{P}^M}|_Y(-H))
\end{equation}

Now by Lemma \ref{T_Y(-H)}, part $(3)$, we have that $H^0(T_Y(-H)) = 0$ if $a = 2$ and $b \geq e+1$. Turning to the Euler sequence on $\mathbb{P}^M$, restricting to $Y$ twisting by $-H$ and taking cohomology, we get

\begin{equation}\label{36}
    0 \to H^0(\mathcal{O}_Y^{\oplus M+1}) \to H^0(T_{\mathbb{P}^M}|_Y(-H)) \to 0
\end{equation}

Hence $h^0(T_{\mathbb{P}^M}|_Y(-H)) = M+1$. So it is enough to show that the map $H^1(T_Y(-H)) \to H^1(T_{\mathbb{P}^M}|_Y(-H))$ is injective. Continuing with the long exact sequence of cohomology in \ref{36}, we have that 

\begin{equation}\label{36}
    0 \to H^1(T_{\mathbb{P}^M}|_Y(-H)) \to H^2(-H) \to 0
\end{equation}

Similary tensoring the exact sequence of horizontal and vertical tangent bundles by $-H$, we have

\begin{equation}\label{37}
    0 \to T_{Y/\mathbb{P}^1}(-H) \to T_Y(-H) \to p^*(T_{\mathbb{P}^1})(-H) \to 0
\end{equation}

and taking the cohomology, we have that 

\begin{equation}\label{38}
    H^0(p^*(T_{\mathbb{P}^1})(-H)) \to H^1(T_{Y/\mathbb{P}^1}(-H)) \to H^1(T_Y(-H)) \to H^1(p^*(T_{\mathbb{P}^1})(-H)) \to 0
\end{equation}

Now note that $p^*(T_{\mathbb{P}^1})(-H) = -2C_0-(b-2)f$. We have that $H^0(-2C_0-(b-2)f) = 0$ and $H^1(-2C_0-(b-2)f) = H^1((b-e-4)f)^* = 0$ if $b \geq e+3$. hence we have 

\begin{equation}\label{39}
    0 \to H^1(T_{Y/\mathbb{P}^1}(-H)) \to H^1(T_Y(-H)) \to  0
\end{equation}

Composing the map $H^1(T_Y(-H)) \to H^1(T_{\mathbb{P}^M}|_Y(-H))$ in sequence \ref{35} with the isomorphisms given by the sequences \ref{36} and \ref{39}, we see that it is enough to show that the map 

\begin{equation}\label{40}
     H^1(T_{Y/\mathbb{P}^1}(-H)) \to H^2(-H)
\end{equation}

is injective. But the above map comes as follows: Take the relative Euler sequence of the map $Y \to \mathbb{P}^1$, given in sequence \ref{28} twist by $-H$ and take the cohomology to get

\begin{equation}\label{41}
    0 \to H^1(T_{Y/\mathbb{P}^1}(-H)) \to H^2(-H)
\end{equation}

Hence we have that $\gamma = 0$ if $a = 2$ and $b \geq 2e+5$. \\

\QEDB

\subsection{Extendability of general $K3$ surfaces of higher genus}

The following theorem is the first of our main results in the article. We give upper bounds $\alpha(X)$ for a general $K3$ surface $X \subset \mathbb{P}^M$ of index $r$ and large genus $g$. Using Theorem \ref{ZL for K3 carpets}, we compute $\alpha(\widetilde{Y})$ for a suitably chosen $K3$ carpet $\widetilde{Y}$, which by semicontinuity gives us an upper bound.

\begin{theorem}\label{non-extendability of K3 surfaces}
    Let \textcolor{black}{$X \subset \mathbb{P}^{M}$ be a general $K3$ surface in $\mathcal{H}_{r,g}$ embedded by the complete linear series of $rB$, where $B$ is a primitive very ample line bundle, with $B^2 = 2g-2$ and $M = 1+r^2(g-1)$.} Assume either of the following conditions hold:
    \begin{enumerate}
        \item[(1)] $r \geq 5, g \geq 3$
        \item[(2)] $r = 4, g \geq 4$
        \item[(3)] $r = 3, g \geq 5$ 
        \item[(4)] $r = 2$, $g \geq 7$ 
        \item[(5)] $r = 1$ and $g$ is either $g = 4k+1, k \geq 5$ or $g = 18k+4, k \geq 1$, or $g = 18k+7, k \geq 2$ or $g = 18k+13, k \geq 1$ or $g = 18k+16, k \geq 1$
        %and 
                  % \begin{enumerate}
                   %    \item[(a)]  $g = 2pq+1, p \geq 2, q \geq 2$, $p$, $q$ coprime or
                    %   \item[(b)] $g = 2pq-p^2+1, p \geq 2, q \geq p+1$, $p$, $q$ coprime 
                   %\end{enumerate}

       % \item[(5)] $r = 1$, $g = 11$ or $g \geq 13$  
                   %\begin{enumerate}
                     %  \item[(a)] $g = 6q+1$, $3 \nmid q$, $q \geq 4$
                     %  \item[(b)] $g = 6q+1$, $3 \nmid q+1$, $q \geq 5$
                      % \item[(c)] $g = 8q+1$, $2 \nmid q$, $q \geq 4$
                      % \item[(d)] $g = 2pq+1$, $p, q$ coprime, $p \geq 5, q \geq 5$
                       %\item[(e)] $g = 2pq-p^2+1$, $p, q$ coprime, $p \geq 4, q \geq p+2$ 
                   %\end{enumerate}
        
    \end{enumerate}
\end{theorem}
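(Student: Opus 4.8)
The plan is to realize $X$ as a flat limit of the $K3$ carpets supplied by \Cref{degeneration into $K3$ carpets} and to bound $\alpha(X)$ from above by $\alpha(\widetilde{Y})$, which \Cref{ZL for K3 carpets} together with \Cref{calculating beta} lets us force to be $0$. For any smooth $K3$ surface $X \subset \mathbb{P}^M$ of the stated type one has $H^0(T_X(-1)) = H^0(\Omega^1_X(-rB)) = 0$, since $\Omega^1_X \cong T_X$ is semistable of slope $0$ while $rB$ is ample; hence the Euler sequence restricted to $X$ gives $H^0(N_{X/\mathbb{P}^M}(-1)) \supseteq H^0(T_{\mathbb{P}^M}|_X(-1)) = \mathbb{C}^{M+1}$, so that $\alpha(X) \ge 0$ always. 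Thus it suffices to exhibit, for each pair $(r,g)$ in the list, a Hirzebruch surface $\mathbb{F}_e$ with $0 \le e \le 2$ and a very ample $H = aC_0 + bf$ whose $K3$ carpet $\widetilde{Y}$ smooths to a general $X \in \mathcal{H}_{r,g}$ and satisfies $\beta = 0$ (or $\gamma = 0$ when $a = 2$); then $\alpha(X) \le \alpha(\widetilde{Y}) \le 0$ by semicontinuity, forcing $\alpha(X) = 0$, and non-extendability follows from \Cref{ZL}.

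The first step is to fix the dictionary between $(a,b,e)$ and $(r,g)$. By \Cref{degeneration into $K3$ carpets} the smooth fiber has index $r = \gcd(a,b)$, while comparing the degree $2H^2$ of the carpet with the degree $r^2(2g-2)$ of its smoothing yields the genus relation $a(2b - ae) = r^2(g-1)$; writing $a = ra'$, $b = rb'$ with $\gcd(a',b') = 1$ this becomes $g - 1 = a'(2b' - a'e)$. The task is then purely arithmetic: choose $(a',b',e)$ realizing the prescribed $g$, scale by $r$, and verify that $(a,b,e) = (ra', rb', e)$ lands in a vanishing range of \Cref{calculating beta} while respecting very ampleness $b \ge ae + 1$.

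For the non-prime cases $r \ge 3$ I would always take $a' = 1$, so $a = r$. When $g - 1$ is even set $e = 0$ and $b' = (g-1)/2$; when $g-1$ is odd set $e = 1$ and $b' = g/2$. One checks directly that $\gcd(a,b) = r$, that the genus relation holds, and that the resulting $b = r(g-1)/2$ (resp.\ $rg/2$) clears the threshold of the matching row of \Cref{calculating beta} precisely when $g \ge 5$ for $r = 3$, $g \ge 4$ for $r = 4$, and $g \ge 3$ for $r \ge 5$, which is exactly the content of cases (1)--(3). For $r = 2$ (case (4)) the index forces $a = 2$, so I use the $\gamma$-estimate of \Cref{ZL for K3 carpets}(2): taking $e = 0$, $b = g-1$ for $g$ odd and $e = 1$, $b = g$ for $g$ even, the hypothesis $b \ge 2e + 5$ of \Cref{calculating beta}(2) becomes exactly $g \ge 7$.

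The genuinely delicate case is the prime one, $r = 1$ (case (5)), and this is where I expect the main obstacle: coprimality $\gcd(a,b) = 1$ removes all scaling freedom, so the equation $g - 1 = a(2b - ae)$ must be solved directly with $(a,b,e)$ landing inside a vanishing range. I would realize the family $g = 4k+1$ through $a = 2$ with $b$ odd and the $\gamma$-estimate: here $g - 1 = 4(b - e)$, so $k = b - e$, and the requirement $b \ge 2e + 5$ translates into $k \ge 5$. The four congruence classes modulo $18$ come instead from $a = 3$ and the $\beta$-estimate: with $e = 0$ one gets $g = 6b + 1$, which runs through $g \equiv 7, 13 \pmod{18}$ as $b \equiv 1, 2 \pmod 3$ (the condition $3 \nmid b$ being forced by coprimality), while $e = 1$ gives $g = 6b - 8 \equiv 4, 16 \pmod{18}$; in each case the lower bound on $b$ in \Cref{calculating beta} converts into the stated lower bound on $k$. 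Matching these narrow arithmetic progressions to the vanishing ranges is the crux, and it is precisely this rigidity, namely the interaction of coprimality with the admissible triples $(a,b,e)$, that confines the prime case to the listed values of $g$ rather than all of them.
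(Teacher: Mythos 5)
Your proposal is correct and follows essentially the same route as the paper: degenerate to a $K3$ carpet on $\mathbb{F}_e$ via \Cref{degeneration into $K3$ carpets}, use semicontinuity, and choose $(a,b,e)$ so that \Cref{calculating beta} yields $\beta=0$ (or $\gamma=0$ when $a=2$); your explicit choices agree with the paper's up to reparameterization, and you additionally make explicit the standard lower bound $\alpha(X)\geq 0$ that the paper leaves implicit. The one caveat is inherited from the paper itself: for the family $g=18k+4$ (realized with $a=3$, $e=1$, $b=3k+2$) the threshold $b\geq 7$ from \Cref{calculating beta} actually forces $k\geq 2$ rather than the stated $k\geq 1$.
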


Then $\alpha(X) = 0$ and consequently $X \subset \mathbb{P}^M$ is not extendable.

\begin{proof}
    %We first show that a general $K3$ surface of index $r$ and genus $g$ as mentioned in Theorem \ref{non-extendability of K3 surfaces} is not extendable. For this, 
    We use Proposition \ref{calculating beta}, to show that $\beta = 0$ for $a \geq 3$ and suitable values of $b$ and $\gamma = 0$ for $a = 2$ and suitable values of $b$ and hence $\alpha(\widetilde{Y}) = 0$ for a suitably chosen $K3$ carpet $\widetilde{Y}$. 
    %type II (\textcolor{black}{you are referring to Kondoa's paper. Should we indicate what Type II mean...makes it less irrritating for the reader}) normal crossing $Y_1 \bigcup Y_2$. 
    \begin{enumerate}
        \item[(1)] For $r \geq 5$, we can choose $a = r, e = 0, b = rm, m \geq 1$ to get non-extendability of general $K3$ surfaces of index $r$ and genus $2m+1$, $m \geq 1$ and $a = r, e = 1, b = rm, m \geq 2$ to get non-extendability of general $K3$ surfaces of index $r$ and genus $2m$ with $m \geq 2$. 

        \smallskip
        
        \item[(2)] For $r = 4$, we can choose $a = 4, e = 0, b = 4m, m \geq 2$ to get non-extendability of general $K3$ surfaces of index $r$ and genus $2m+1$, $m \geq 2$ and $a = 4, e = 1, b = 4m, m \geq 2$ to get non-extendability of general $K3$ surfaces of index $4$ and genus $2m$ with $m \geq 2$. 

       \smallskip
        
        \item[(3)] For $r = 3$, we can choose $a = 3, e = 0, b = 3m, m \geq 2$ to get non-extendability of general $K3$ surfaces of index $3$ and genus $2m+1$, $m \geq 1$ and $a = 3, e = 1, b = 3m, m \geq 3$ to get non-extendability of general $K3$ surfaces of index $3$ and genus $2m$ with $m \geq 2$. 
        
        \smallskip

        \item[(4)] For $r = 2$, we use part $(2)$. One can choose $a = 2, e = 0, b = 2m, m \geq 3$ to get non-extendability of general $K3$ surfaces of index $2$ and genus $2m+1$, $m \geq 3$ and $a = 2, e = 1, b = 2m, m \geq 4$ to get non-extendability of general $K3$ surfaces of index $2$ and genus $2m$ with $m \geq 4$.
       
        \item[(5)] For $r = 1$, we first use part $(2)$. We can choose $a = 2, e = 0$  and $b = k$ odd, $k \geq 5$ to get non-extendability of general prime $K3$ surfaces of genus $4k+1$ and $a = 1, e = 1$  and $b = k+1$, $k \geq 7$, $k$ even to get non-extendability of general prime $K3$ surfaces of genus $4k+1$. Then we use part $(1)$. One can choose $a = 3, e = 0, b = 3k+1, k \geq 2$ to get non-extendability of general prime $K3$ surfaces of genus $18k+7$. Similarly the rest follows by choosing $a = 3, e = 0, b = 3k+2, k \geq 1$, $a = 3, e = 1, b = 3k+i, k \geq 2$ and $i = 1,2$.

       % \item[(4)] For $r = 2$, we can choose $a = 2p, e = 0, b = 2q$ with $p, q$ coprime, $p \geq 2, q \geq 2$ to get non-extendability of general $K3$ surfaces of index $2$ and genus $2pq+1$ and $a = 2p, e = 1, b = 2q$ with $p, q$ coprime, $p \geq 2, q \geq p+1$ to get non-extendability of general $K3$ surfaces of index $2$ and genus $2pq-p^2+1$
        
      % \smallskip

      % \item[(5)]  For $r = 1$, we use part $(3)$. One can choose $a = 1, e = 0, b = m, m \geq 5$ to get non-extendability of general $K3$ surfaces of index $1$ and genus $2m+1$, $m \geq 5$ and $a = 1, e = 1, b = m, m \geq 7$ to get non-extendability of general $K3$ surfaces of index $1$ and genus $2m$ with $m \geq 7$.
        
      %  \item[(5)] For $r = 1$, we can choose $a = 3, e = 0, b = q$ with $3 \nmid q$ with $q \geq 4$ to get non-extendability of general $K3$ surfaces of index $1$ and genus $6q+1$ or $a = 3, e = 0, b = q$ with $3 \nmid q$ with $q \geq 6$ to get non-extendability of general $K3$ surfaces of index $1$ and genus $6q-7$ or $a = 4, e = 0, b = q$ with $2 \nmid q$ with $q \geq 4$ to get non-extendability of general $K3$ surfaces of index $1$ and genus $8q+1$ or $a = p, e = 0, b = q$ with $p, q$  with $p \geq 5, q \geq 3$, $p, q$ coprime to get non-extendability of general $K3$ surfaces of index $1$ and genus $2pq+1$ or $a = p, e = 1, b = q$ with $p, q$ coprime, $p \geq 4, q \geq p+2$ to get non-extendability of general $K3$ surfaces of index $1$ and genus $2pq-p^2+1$. 
     
    \end{enumerate}
\end{proof}

\begin{corollary}\label{corank one}
\textcolor{black}{Let $X \subset \mathbb{P}^{M}$ be a general $K3$ surface in $\mathcal{H}_{r,g}$ embedded by the complete linear series of $rB$, where $B$ is a primitive very ample line bundle, with $B^2 = 2g-2$ and $M = 1+r^2(g-1)$.} Let $C \in |rB|$ be any smooth curve section of $X$. Assume either $r \geq 5, g \geq 3$ or $r = 4, g \geq 4$ or $r = 3, g \geq 5$ or $r = 2$, $g \geq 7$ or $r = 1$ and $g$ is either $g = 4k+1, k \geq 5$ or $g = 18k+4, k \geq 1$, or $g = 18k+7, k \geq 2$ or $g = 18k+13, k \geq 1$ or $g = 18k+16, k \geq 1$ Then $\alpha(C) = \operatorname{cork}(\Phi_{\omega_C}) = 1$.
\end{corollary}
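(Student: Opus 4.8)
The plan is to deduce $\operatorname{cork}(\Phi_{\omega_C}) = 1$ from the vanishing $\alpha(X)=0$ supplied by \Cref{non-extendability of K3 surfaces}, by transporting information between the extendability invariant of the surface $X$ and the extendability invariant of its canonical curve section. First I would record that $C$ is in fact canonically embedded: since $K_X = \mathcal{O}_X$, adjunction gives $\omega_C = (\omega_X \otimes \mathcal{O}_X(C))|_C = \mathcal{O}_X(C)|_C = \mathcal{O}_C(1)$, so the hyperplane $\mathbb{P}^{M-1} \subset \mathbb{P}^M$ cutting out $C$ realizes $C \hookrightarrow \mathbb{P}^{M-1}$ as a canonical curve of genus $M = 1 + r^2(g-1)$, and $\Phi_{\omega_C}$ is the associated Gauss--Wahl map. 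I would then use the identification $\alpha(C) = \operatorname{cork}(\Phi_{\omega_C})$ valid for canonical curves, so that the statement becomes a comparison between $\alpha(X)$ and $\alpha(C)$.

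For the lower bound $\alpha(C) = \operatorname{cork}(\Phi_{\omega_C}) \geq 1$, I would appeal to the fact that $C$ is a hyperplane section of the genuine K3 surface $X$: a canonical curve lying on a K3 surface has non-surjective Gauss--Wahl map (Wahl, Beauville--M\'erindol). Equivalently, $X$ is already a $1$-extension of $C$, so $C$ is at least $1$-extendable and $\alpha(C) \geq 1$. This part holds for every smooth section and requires no genus restriction.

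For the upper bound I would invoke the non-extendability of $X$ together with the results of \cite{CDS20} and \cite{ABS17}. The content of those papers (under the relevant genus and Clifford index hypotheses) is a precise comparison between the extendability of the K3 surface and the Gauss--Wahl corank of its canonical sections, of the shape $\operatorname{cork}(\Phi_{\omega_C}) = \alpha(X) + 1$, where the extra $+1$ accounts for the surface $X$ itself as an extension of the curve while $\alpha(X)$ measures any further extension of $X$ to a threefold. Since \Cref{non-extendability of K3 surfaces} gives $\alpha(X) = 0$ in each case listed, this pins $\operatorname{cork}(\Phi_{\omega_C}) \leq 1$, and combining with the lower bound yields $\alpha(C) = \operatorname{cork}(\Phi_{\omega_C}) = 1$.

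The hard part will be verifying that the canonical sections $C$ genuinely fall within the range of validity of \cite{CDS20} and \cite{ABS17}, and that the conclusion holds for \emph{any} smooth section rather than only the general one. The first issue is the Clifford index and Brill--Noether--Petri generality hypotheses needed to activate the surface/curve comparison; these are delicate precisely in the non-prime cases $r \geq 2$, where the polarization class $rB$ is non-primitive and the section $C$ can sit in a special position with respect to its Clifford index, so one must check case by case (using $g \geq 3$ and the explicit genus list, which forces the curve genus $M$ to be large) that the cited theorems apply. The second issue is that $\operatorname{cork}(\Phi_{\omega_C})$ is only upper semicontinuous, so the bound $\leq 1$ cannot be obtained merely by specializing to a general $C$; here I would lean on the structural uniqueness of the surface extension in \cite{ABS17} --- every smooth section of the fixed, non-extendable K3 surface $X$ admits $X$ as its essentially unique surface extension --- to upgrade the inequality to all smooth members of $|rB|$.
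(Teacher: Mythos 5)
Your proposal is correct and follows essentially the same route as the paper: the lower bound $\operatorname{cork}(\Phi_{\omega_C})\geq 1$ comes from $X$ itself being a surface extension of $C$ (Wahl/Beauville--M\'erindol), and the upper bound comes from combining $\alpha(X)=0$ of \Cref{non-extendability of K3 surfaces} with the universal-extension results of \cite{CDS20} and \cite{ABS17}, which is exactly what the paper does (it phrases the latter via the arithmetically Gorenstein variety $W$ of dimension $1+\operatorname{cork}(\Phi_{\omega_C})$ containing all surface extensions of $C$, rather than via your equality $\operatorname{cork}(\Phi_{\omega_C})=\alpha(X)+1$, but the content is the same). The hypotheses you flag are handled in the paper only by checking $g(C)=r^2(g-1)+1\geq 11$, and the ``any smooth section'' issue is resolved exactly as you suggest, by running the argument through the fixed non-extendable surface $X$ for each individual $C$.
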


\begin{proof}
First note that $2g(C)-2 = r^2B^2 = r^2(2g-2)$ and hence $g(C) = r^2(g-1)+1 \geq 11$. Now let $\operatorname{cork}(\Phi_{\omega_C}) = r$. Then by \cite[$2.2$, $2.2.2$]{CDS20} (see also \cite{ABS17}), we have the existence of an arithmetically Gorenstein normal variety $W$, such that every ribbon $[v] \in \mathbb{P}(\operatorname{ker}(^T\Phi_{\omega_C}))$ (these are ribbons on $C$ with conormal bundle $\omega_C^{-1}$) is contained in a unique section of $W$ by a linear $g(C)$ space. In particular, $W$ contains all surface extensions of $C$. Therefore $\alpha(X) = 0$ forces $\alpha(C) = 1$. 
\end{proof}

\begin{remark}{\label{independent corank one}}
The corank one theorem can also be derived from our methods independently by showing that the map $H^1(N_{\widetilde{Y}/\mathbb{P}^M}(-2\widetilde{H})) \to H^1(N_{\widetilde{Y}/\mathbb{P}^M}(-\widetilde{H}))$ in the exact sequence
$$0 \to H^0(N_{\widetilde{Y}/\mathbb{P}^M}(-2\widetilde{H})) \to H^0(N_{\widetilde{Y}/\mathbb{P}^M}(-\widetilde{H})) \to H^0(N_{\widetilde{C}/\mathbb{P}^M}(-\widetilde{H})) \to H^1(N_{\widetilde{Y}/\mathbb{P}^M}(-2\widetilde{H})) \to H^1(N_{\widetilde{Y}/\mathbb{P}^M}(-\widetilde{H}))$$
is injective. To show this, along with the cohomological computations and analysis of coboundary maps using Equations \ref{1}, \ref{2}, \ref{3}, as introduced in here, one needs to use results on Gaussian maps on Hirzebruch surfaces in \cite{DM92} and \cite{DP12}. Since showing injectivity of this map is essential to handle the case of extendability of prime $K3$ surfaces, we will come back to this in a forthcoming article (\cite{BM25}). 
\end{remark}

\Cref{calculating beta} has some consequences on extendability of $K3$ double covers $X$ of $\mathbb{F}_0$ or $\mathbb{F}_1$. First one notices by $T_0 = \pi^*C_0$ and $T_1 = \pi^*f$ generate the Picard group of $X$. 

\begin{theorem}\label{rank two}

Let $X$ be a $K3$ surface which is a double cover $\pi: X \to Y$ of $Y = \mathbb{F}_0$ or $Y = \mathbb{F}_1$. Let $T_0 = \pi^*C_0$ and $T_1 = \pi^*f$. Then for $X \subset \mathbb{P}^M$ where the embedding is induced by the complete linear series of a very ample line bundle $B = aT_0+bT_1$, we have $\alpha(X) = 0$ if $a = 3, b \geq 5, e = 0$ or a = $3, b \geq 7, e = 1$ respectively.

\end{theorem}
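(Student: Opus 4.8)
The plan is to recognize the given polarization as a pullback and then realize this particular double cover as an \emph{embedded} degeneration of a $K3$ carpet, after which \Cref{ZL for K3 carpets} and \Cref{calculating beta} do the work. First I would identify $B$: since $T_0=\pi^*C_0$ and $T_1=\pi^*f$, we have $B=\pi^*H$ with $H=aC_0+bf$ on $Y=\mathbb{F}_e$, and $H$ is very ample because the hypotheses ($a=3$, $b\geq 5$ for $e=0$, resp. $b\geq 7$ for $e=1$) force $b\geq ae+1$. Using $\pi_*\mathcal{O}_X=\mathcal{O}_Y\oplus\mathcal{O}_Y(K_Y)$ (the $K3$ cover is branched on $D\in|-2K_Y|$, so the defining line bundle is $-K_Y$), one gets $H^0(X,B)=H^0(Y,H)\oplus H^0(Y,H+K_Y)$, whence $M=h^0(H)+h^0(H+K_Y)-1=1+H^2$; the ``odd'' summand $H^0(H+K_Y)$ is what separates the two sheets of $\pi$ and makes the pullback $B$ very ample, consistent with the hypothesis.

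The crucial point is that this is exactly the decomposition of $H^0(\widetilde{Y},\widetilde{H})$ for the $K3$ carpet $\widetilde{Y}$ on $Y$ with $\widetilde{H}|_Y=H$, read off from the restriction sequence $0\to\mathcal{O}_Y(H+K_Y)\to\mathcal{O}_{\widetilde{Y}}(\widetilde{H})\to\mathcal{O}_Y(H)\to 0$ (the conormal bundle of a $K3$ carpet is $K_Y$). Indeed, the proof of \Cref{degeneration into $K3$ carpets}$(3)$ constructs the smoothing of $\widetilde{Y}$ by deforming precisely the composite $X\xrightarrow{\pi}Y\hookrightarrow\mathbb{P}^M$ attached to this double cover. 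I would invoke that construction directly: it exhibits $X\subset\mathbb{P}^M$, embedded by the complete series of $B$, as a smoothing fiber of the embedded carpet $\widetilde{Y}\subset\mathbb{P}^M$ (valid for $0\leq e\leq 1$), i.e.\ a flat one–parameter family in the Hilbert scheme with general fiber $X$ and special fiber $\widetilde{Y}$, both nondegenerate in the same $\mathbb{P}^M$.

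Given the degeneration, the conclusion is a short semicontinuity argument. Upper semicontinuity of $h^0$ of the twisted relative normal bundle gives $\alpha(X)\leq\alpha(\widetilde{Y})$. By \Cref{ZL for K3 carpets}$(1)$ one has $\alpha(\widetilde{Y})\leq\beta$, and by \Cref{calculating beta}$(1)$ the quantity $\beta$ vanishes exactly in the two ranges $(a=3,\,b\geq 5,\,e=0)$ and $(a=3,\,b\geq 7,\,e=1)$, so $\alpha(X)\leq 0$. For the reverse inequality I would use the Euler sequence twisted by $-B$, which yields $h^0(T_{\mathbb{P}^M}|_X(-B))=M+1$ (here $H^1(\mathcal{O}_X(-B))=0$ on the $K3$), together with $H^0(T_X(-B))=0$; the latter makes $H^0(T_{\mathbb{P}^M}|_X(-B))\hookrightarrow H^0(N_{X/\mathbb{P}^M}(-B))$ injective, so $\alpha(X)\geq 0$. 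Combining the two bounds gives $\alpha(X)=0$ (and non-extendability then follows from \Cref{ZL}).

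The hard part will be the input of the second paragraph, namely being certain that \emph{this} double cover is literally a fiber of the carpet smoothing. Note $X$ is a very special, positive-codimension member of the rank-two locus (it carries the covering involution, and it is prime or non-prime according to $\gcd(a,b)$), not a general polarized $K3$ of index $\gcd(a,b)$, so semicontinuity must be applied to $X$ itself rather than to the generic fiber. This is precisely what the double-cover construction in the proof of \Cref{degeneration into $K3$ carpets}$(3)$ (and in \cite{BMR21}) guarantees, once matched with the even/odd decomposition of $H^0(B)$ above; the remaining checks (very ampleness of $H$, the vanishings $H^1(\mathcal{O}_X(-B))=H^0(T_X(-B))=0$, and $M=1+H^2$) are routine and I would not dwell on them.
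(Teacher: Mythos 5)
Your overall strategy is the one the paper uses: degenerate $X\subset\mathbb{P}^M$ inside the Hilbert scheme to a $K3$ carpet $\widetilde{Y}$ on $Y=\mathbb{F}_e$ with $\widetilde{H}|_Y=H=aC_0+bf$, apply \Cref{ZL for K3 carpets}$(1)$ together with \Cref{calculating beta}$(1)$ to get $\alpha(\widetilde{Y})\leq\beta=0$ in the stated ranges, and conclude by semicontinuity. Your identification $B=\pi^*H$, the decomposition $H^0(X,B)=H^0(Y,H)\oplus H^0(Y,H+K_Y)$ matching $h^0(\widetilde{Y},\widetilde{H})$, and the lower bound $\alpha(X)\geq 0$ via the Euler sequence are all fine (the last point is standard and the paper does not even spell it out).

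The gap is in the step you yourself flag as the hard part. The construction in the proof of \Cref{degeneration into $K3$ carpets}$(3)$ does \emph{not} exhibit $X$ itself as a smoothing fiber of the embedded carpet: the fibers $X_t=\varphi_t(X)$ there are images of \emph{general} deformations $\varphi_t$ of the composite $\varphi=i\circ\pi$, and a general deformation moves the abstract surface as well, so the general fiber is a general $K3$ of the component (Picard rank one, or a general index-$r$ surface), not your fixed rank-two double cover. Since $X$ sits in positive codimension in that component, semicontinuity applied to that family says nothing about $\alpha(X)$. What the paper uses instead is the \emph{isotrivial} degeneration of the double cover to the \emph{split} ribbon (scale the branch datum, keeping $X$ constant on the punctured base), which produces an embedded flat family with general fiber $X\subset\mathbb{P}^M$ itself and special fiber the split carpet; for this to be the embedded carpet in $\mathbb{P}^M$ one also needs very ampleness (indeed projective normality) of $\widetilde{H}$ on the \emph{split} carpet, which is not covered by the existence statement in \Cref{degeneration into $K3$ carpets}$(1)$ and is why the paper cites \cite[Proposition~$2.1$]{DM24} (valid for $a\geq 2$, $b\geq(a-1)e+2$, hence in your ranges). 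With that substitution your argument closes; without it, the degeneration you invoke does not have $X$ as a fiber.
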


\begin{proof}
    First note that by \cite[Proposition $2.1$]{DM24}, for the split $K3$ carpet $\widetilde{Y}$ on $Y = \mathbb{F}_0$ or $Y = \mathbb{F}_1$, the line bundle $\widetilde{H}$ such that $\widetilde{H}|_Y = H$ is very ample and in fact projectively normal if $a \geq 2$ and $b \geq (a-1)e+2$. Further, since any double cover $X$ isotrivially degenerates to the split ribbon $\widetilde{Y}$, $\alpha(X) = 0$ for $X$, if $\alpha(\widetilde{Y}) = 0$. Now the vanishing of the latter follows from part \Cref{calculating beta}, $(1)$.
\end{proof}

\subsection{Extendability of general non-prime $K3$ surfaces of lower genus}

In the following theorem, we compute $\alpha(X)$ for a general $K3$ surface $X \subset \mathbb{P}^M$ of index $r \geq 2$ and low genus $g$. We will use the upper bounds to compute the dimension of the tangent space corresponding to the Hilbert point represented by the cone over a $K3$ surface. We show later in \Cref{equality of alpha(x)} that the upper bounds are actually equalities. 

\begin{theorem}\label{ZL for lower genus $K3$ surfaces}
\textcolor{black}{Let $X \subset \mathbb{P}^{M}$ be a general $K3$ surface in $\mathcal{H}_{r,g}$ embedded by the complete linear series of $rB$, where $B$ is a primitive very ample line bundle, with $B^2 = 2g-2$ and $M = 1+r^2(g-1)$.}
 
    \begin{enumerate}
       % \item If $r = 1$, $g = 12$, then $\alpha(X) \leq 1$
       % \item If $r = 1$, $g = 10$, then $\alpha(X) \leq 3$  
       % \item If $r = 1$, $g = 9$, then $\alpha(X) \leq 4$
       % \item If $r = 1$, $g = 8$, then $\alpha(X) \leq 6$
       % \item If $r = 1$, $g = 7$, then $\alpha(X) \leq 8$
       % \item If $r = 1$, $g = 6$, then $\alpha(X) \leq 10$ (\textcolor{blue}{exceptional case})
       % \item \label{2,2} If $r = 2$, $g = 2$, then $\alpha(X) \leq 14$ 
        \item \label{2,3} If $r = 2$, $g = 3$, then $\alpha(X) \leq 10$ %(\textcolor{blue}{exceptional case})
        %(\textcolor{blue}{Needs improvement by $1$.})
        \item \label{2,4} If $r = 2$, $g = 4$, then $\alpha(X) \leq 6$ 
        \item \label{2,5} If $r = 2$, $g = 5$, then $\alpha(X) \leq 3$ 
        \item \label{2,6} If $r = 2$, $g = 6$, then $\alpha(X) \leq 1$ 
      %  \item If $r = 3$, $g = 2$, then $\alpha(X) \leq 10$ %(\textcolor{blue}{exceptional case})
        \item \label{3,3} If $r = 3$, $g = 3$, then $\alpha(X) \leq 4$
        \item \label{3,4} If $r = 3$, $g = 4$, then $\alpha(X) \leq 1$ 
        \item \label{4,3} If $r = 4$, $g = 3$, then $\alpha(X) \leq 1$

    \end{enumerate}
    
\end{theorem}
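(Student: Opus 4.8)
The plan is to invoke \Cref{degeneration into $K3$ carpets} together with the semicontinuity of $\alpha$, so that it suffices to bound $\alpha(\widetilde{Y})$ for one conveniently chosen $K3$ carpet $\widetilde{Y}\subset\mathbb{P}^M$ degenerating the given $X$, and then to extract the bound from \Cref{ZL for K3 carpets}. The preliminary step is to produce, for each $(r,g)$, a surface $\mathbb{F}_e$ and a polarization $H=aC_0+bf$ with $\gcd(a,b)=r$ whose carpet smooths to a $K3$ of index $r$ and genus $g$. Setting $a=ra'$, $b=rb'$, the controlling identity is $g=2a'b'-ea'^2+1$, subject to very ampleness $b\ge ae+1$; the assignments
\[ (2,3)\mapsto(2,2,0),\quad (2,4)\mapsto(2,4,1),\quad (2,5)\mapsto(2,4,0),\quad (2,6)\mapsto(2,6,1), \]
\[ (3,3)\mapsto(3,3,0),\quad (3,4)\mapsto(3,6,1),\quad (4,3)\mapsto(4,4,0) \]
meet both requirements. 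For $r=3,4$ I have $a=r\ge 3$ and will use part $(1)$ of \Cref{ZL for K3 carpets}; for $r=2$ I take $a=2$ and use part $(2)$.

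For the index $3$ and $4$ cases I would evaluate $\beta$ term by term. Every summand of $\beta$ is a cohomology group of a line bundle or of $T_{\mathbb{F}_e}$, which one computes by pushing forward along $p\colon\mathbb{F}_e\to\mathbb{P}^1$ and applying Serre duality — exactly the bookkeeping carried out in \Cref{2C_0+ef-H}, \Cref{2f-H}, \Cref{-H-K_Y}, \Cref{-H-2K_Y} and \Cref{T_Y(-H)}. Under the very-ampleness bounds above these lemmas force $h^1(T_Y(-H+K_Y))=h^1(T_Y(-H))=h^0(T_Y(-H))=h^1(-H-K_Y)=0$, so that $\beta$ collapses to the single term $h^0(-H-2K_Y)$. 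Computing the latter gives $h^0(\mathcal{O}_{\mathbb{F}_0}(C_0+f))=4$ for $(3,3,0)$ and the value $1$ for each of $(3,6,1)$ and $(4,4,0)$, hence $\alpha(X)\le 4,1,1$.

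For the index $2$ cases I would apply part $(2)$ of \Cref{ZL for K3 carpets}. Here $h^1(T_Y(-H+K_Y))=0$ by part $(2)$ of \Cref{T_Y(-H)}, while $h^0(-H-2K_Y)$ is computed directly to be $9,6,3,1$ in the four cases. The remaining quantity is $h^0(N_{Y/\mathbb{P}^M}(-H))$: twisting $0\to T_Y\to T_{\mathbb{P}^M}|_Y\to N_{Y/\mathbb{P}^M}\to 0$ by $-H$ and using $h^0(T_Y(-H))=0$ (part $(3)$ of \Cref{T_Y(-H)}) together with $h^0(T_{\mathbb{P}^M}|_Y(-H))=M+1$ from the Euler sequence gives
\[ h^0(N_{Y/\mathbb{P}^M}(-H))=(M+1)+\dim\ker\!\left[H^1(T_Y(-H))\to H^1(T_{\mathbb{P}^M}|_Y(-H))\right], \]
where the Euler sequence identifies $H^1(T_{\mathbb{P}^M}|_Y(-H))\cong H^2(\mathcal{O}_Y(-H))$. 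Because $Y$ sits in a linear $\mathbb{P}^N\subset\mathbb{P}^M$ with $N=h^0(H)-1$, the normal bundle splits as $N_{Y/\mathbb{P}^M}\cong N_{Y/\mathbb{P}^N}\oplus\mathcal{O}_Y(H)^{\oplus(M-N)}$, and this $\dim\ker$ is nothing but the extendability defect $\alpha(Y)$ of the reduced Hirzebruch surface in its own span.

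The main obstacle is therefore to pin down $\dim\ker$, i.e.\ the rank of the coboundary $H^1(T_Y(-H))\to H^2(\mathcal{O}_Y(-H))$. I expect to prove this map is surjective, so that $\dim\ker=h^1(T_Y(-H))-h^2(\mathcal{O}_Y(-H))$ and hence $\gamma=h^0(-H-2K_Y)+\dim\ker$ evaluates to $10,6,3,1$, which by semicontinuity gives the asserted bounds. Surjectivity I would obtain by factoring through the relative Euler sequence of $p$ twisted by $-H$: the subspace $H^1(T_{Y/\mathbb{P}^1}(-H))\hookrightarrow H^1(T_Y(-H))$ maps to $H^2(\mathcal{O}_Y(-H))$ by the relative coboundary, and since for each chosen $(a,b,e)$ the middle term of that sequence has vanishing $H^1$ and $H^2$, the relative coboundary is an isomorphism; compatibility of the relative and absolute Euler sequences then propagates surjectivity to the full map. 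Only the case $(2,2,0)$ has $\dim\ker\neq 0$: there $h^1(T_Y(-H))=2>1=h^2(\mathcal{O}_Y(-H))$, so $\dim\ker=1$ and $\alpha(X)\le 9+1=10$.
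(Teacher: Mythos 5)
Your proposal is correct and follows essentially the same route as the paper: the same carpet degenerations $(a,b,e)$ for each $(r,g)$, the bound $\beta$ from part (1) of \Cref{ZL for K3 carpets} for $r=3,4$ collapsing to $h^0(-H-2K_Y)$, and the bound $\gamma$ from part (2) for $r=2$ with the coboundary $H^1(T_Y(-H))\to H^2(\mathcal{O}_Y(-H))$ controlled via the relative Euler sequence of $p:Y\to\mathbb{P}^1$. The only cosmetic difference is that you establish surjectivity of that coboundary (vanishing of $H^1$ and $H^2$ of the middle term), whereas the paper establishes injectivity of the relative coboundary and bounds the cokernel by $h^1(p^*T_{\mathbb{P}^1}(-H))\le 1$; both yield identical numbers, including the extra $+1$ in the case $(2,2,0)$.
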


\begin{proof}

We first prove for $r = 2$. Since $3 \leq g \leq 6$. We can degenerate such a $K3$ surface to a $K3$ carpet $\widetilde{Y} \subset \mathbb{P}^M$, extending $Y \subset \mathbb{P}^M$, where $Y = \mathbb{F}_e$ embedded by the complete linear series of $2C_0+bf$, where $b = 2m$ and $e = 0$ or $e = 1$ depending on whether $g$ is odd or even respectively. In this case $g = 2m-e+1$. So it is enough to find an upper bound for $\alpha(\widetilde{Y})$ for a $K3$ carpet $\widetilde{Y} \subset \mathbb{P}^M$, extending $Y \subset \mathbb{P}^M$, where $Y = \mathbb{F}_e$ embedded by the complete linear series of $2C_0+2mf$ where the pair $(e,m)$ is one of $(0,1), (0,2), (1,2), (1,3)$. We have shown in Theorem \ref{ZL for K3 carpets}, part $(2)$, that $$\alpha(\widetilde{Y})  \leq  \gamma \leq h^1(T_Y(-H+K_Y))+h^0(N_{Y/\mathbb{P}^M}(-H)+h^0(-H-2K_Y)-M-1 $$
So we calculate the right hand side of the above equation for the above mentioned values of $(e,m)$. By Lemma \ref{T_Y(-H)}, $(2)$, we have that for $m \geq e+1/2$, $h^1(T_Y(-H+K_Y)) = 0$. Now in the proof of part $(2)$ of Theorem \ref{calculating beta}, using equations \ref{35}-\ref{41}, we have shown that if $m \geq e/2+3/2$, $h^0(N_{Y/\mathbb{P}^M}(-H) = M+1$. Hence except for the case for the cases $(e,m) = (0,1)$, $\alpha(\widetilde{Y}) \leq h^0(-H-2K_Y)$. This gives us the upper bounds as stated in the statement for the cases $r = 2, 4 \leq g \leq 6$. On the other hand, in the case $(0,1)$, once again in the proof of part $(2)$ of Theorem \ref{calculating beta}, in equation \ref{38}, we have that $H^1(p^*(T_{\mathbb{P}^1})(-H)) = H^1(\mathcal{O}_{\mathbb{P}^1}(-2) = \mathbb{C}$ for $(0,1)$ 
%and $H^1(p^*(T_{\mathbb{P}^1})(-H)) = H^1(\mathcal{O}_{\mathbb{P}^1}(-3) = \mathbb{C}^2$ for $(1,1)$ 
which in turn gives us $h^0(N_{Y/\mathbb{P}^M}(-H)) \leq M+2$ 
%and $h^0(N_{Y/\mathbb{P}^M}(-H) \leq M+3$ respectively 
from the sequence in \ref{35}. Hence in this case $\alpha(\widetilde{Y}) \leq h^0(-H-2K_Y)+1$.
%respectively $\alpha(\widetilde{Y}) \leq h^0(-H-2K_Y)+2$. 
This gives us the remaining case $r = 2, g = 3$ 
%and $r = 2, g = 2$. \par

We now prove for $r = 3,4$. We can degenerate such a $K3$ surface to a $K3$ carpet $\widetilde{Y} \subset \mathbb{P}^M$, extending $Y \subset \mathbb{P}^M$, where $Y = \mathbb{F}_e$ embedded by the complete linear series of $rC_0+rmf$, where $e = 0$ or $e = 1$ depending on whether $g$ is odd or even respectively. Again we have $g = 2m-e+1$. So it is enough to find an upper bound for $\alpha(\widetilde{Y})$ for a $K3$ carpet $\widetilde{Y} \subset \mathbb{P}^M$, extending $Y \subset \mathbb{P}^M$, where $Y = \mathbb{F}_e$ embedded by the complete linear series of $rC_0+rmf$ where the tuple $(r,e,m)$ is one of $(3,0,1), (3,1,2), (4,0,1)$. In each case we calculate $\beta$ in Theorem \ref{ZL for K3 carpets} which gives an upper bound for $\alpha(\widetilde{Y})$. In each case 
%other than $(3,1,1)$ 
one can check that the only contribution to $\beta$ comes from $h^0(-H-2K_Y)$, which gives us the stated bounds. 
%In the case $(3,1,1)$, the contribution comes from $h^0(-H-2K_Y)+h^1(T_Y(-H))$. 
\end{proof}

\section{Classification of non-prime Fano-threefolds and Mukai varieties of Picard rank one and their Hilbert schemes}

Boundedness and classification of smooth Fano threefolds and more generally varieties with a canonical curve section (Mukai varieties) with Picard rank one was first obtained by Iskovskih-Mori-Mukai in \cite{I77}, \cite{I78}, \cite{M88} and \cite{MM82} (see also \cite{BKM25}). We give a new proof in the non-prime case using completely different methods and a general principle outlined in \cite{CLM98}. \par
In the following theorem we compute $h^0(N_{\widetilde{Y}/\mathbb{P}^M}(-2\widetilde{H}))$ which will be an upper bound for $h^0(N_{X/\mathbb{P}^M}(-2))$ for a general non-prime $K3$ surface. We will need this to compute the tangent space to the Hilbert scheme of non-prime Fano threefolds at the Hilbert point representing the cone over such a $K3$ surface. Further, the vanishing of this cohomology group allows us to recover Wahl's theorem on the non-surjectivity of the Gaussian-Wahl map $\Phi_{\omega_C}$ for a curve lying on a $K3$ surface in the non-prime case.

\begin{proposition}\label{H^0(N(-2))}
Let $\widetilde{Y} \hookrightarrow \mathbb{P}^M$ be a $K3$ carpet embedded by the complete linear series of a very ample line bundle $\widetilde{H}$ such that the possibly degenerate embedding $Y = \mathbb{F}_e \hookrightarrow \mathbb{P}^M$ induced on the reduced part is given by the complete linear series of a very ample line bundle $H = aC_0+bf$ followed by a linear embedding of projective spaces.   
Assume $H^1(-H+K_Y) = 0$. Then $$h^0(N_{\widetilde{Y}/\mathbb{P}^N}(-2\widetilde{H})) \leq h^1(T_Y(-2H+K_Y)) + h^1(T_Y(-2H)) - h^0(T_Y(-2H)) +  h^1(-2H-K_Y) + h^0(-2H-2K_Y) $$
Consequently, $h^0(N_{\widetilde{Y}/\mathbb{P}^N}(-2\widetilde{H})) = 0$ if either of the following conditions hold
\begin{enumerate}
    \item $a = 2, b \geq 3, e = 0$
    \item $a \geq 3, b \geq 2, e = 0$
    \item $a \geq 2, b \geq a/2+1, e = 1$
    \item $a \geq 2, b \geq ae/2+1/2, e \geq 2$
    
\end{enumerate}
Moreover, if $a = 2, b = 2, e = 0$, $h^0(N_{\widetilde{Y}/\mathbb{P}^N}(-2\widetilde{H})) \leq 1$. In this case, $h^0(N_{\widetilde{Y}/\mathbb{P}^N}(-3\widetilde{H})) = 0$. Hence if $(r = 2, g \geq 4), (r \geq 3, g \geq 3)$, $H^0(N_{X/\mathbb{P}^M}(-2)) = 0$. Further, if $(r = 2, g = 3)$, then $h^0(N_{X/\mathbb{P}^M}(-2)) \leq 1$.
\end{proposition}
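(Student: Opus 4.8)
The plan is to mirror the proof architecture of \Cref{ZL for K3 carpets} part $(2)$, but with every twist by $\mathcal{O}_Y(-\widetilde{H})$ replaced by a twist by $\mathcal{O}_Y(-2\widetilde{H})$, and to estimate $h^0(N_{\widetilde{Y}/\mathbb{P}^M}(-2\widetilde{H}))$ in terms of cohomology of twists of $N_{Y/\mathbb{P}^M}$ and ultimately of $T_Y$. First I would tensor the structural sequences \ref{1}, \ref{2}, \ref{3} by $\mathcal{O}_{\widetilde{Y}}(-2\widetilde{H})$ (respectively $\mathcal{O}_Y(-2H+K_Y)$ and $\mathcal{O}_Y(-2H)$) and take global sections. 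Since $\widetilde{H}|_Y = H$, the sequence \ref{1} gives
$$h^0(N_{\widetilde{Y}/\mathbb{P}^M}(-2\widetilde{H})) \leq h^0(N_{\widetilde{Y}/\mathbb{P}^M}(-2H+K_Y)) + h^0(N_{\widetilde{Y}/\mathbb{P}^M}(-2H)),$$
and each of the two right-hand terms is then broken up using \ref{2} and \ref{3} exactly as in the proof above, yielding contributions from $h^0(\mathcal{H}om(I_{\widetilde{Y}}/I_Y^2,\mathcal{O}_Y)(-2H+K_Y))$, $h^0(-2H-K_Y)$, and the $(-2H)$-analogues. The key cohomological inputs are that $H^0(-2H)=0$ and $H^1(-2H)=0$ (Kodaira vanishing, which holds since $2H$ is very ample and $H^1(-H+K_Y)=0$ is assumed, strengthening to $-2H$), so that on tensoring \ref{3} by $\mathcal{O}_Y(-2H+K_Y)$ one gets an isomorphism $h^0(\mathcal{H}om(I_{\widetilde{Y}}/I_Y^2,\mathcal{O}_Y)(-2H+K_Y)) = h^0(N_{Y/\mathbb{P}^M}(-2H+K_Y))$, and the analogous identification at level $-2H$.

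Assembling these, the estimate reduces to bounding $h^0(N_{Y/\mathbb{P}^M}(-2H+K_Y))$ and $h^0(N_{Y/\mathbb{P}^M}(-2H))$, together with the line-bundle term $h^0(-2H-2K_Y)$. For the normal-bundle terms I would use the conormal sequence \ref{9} and the Euler sequence \ref{11}, twisted by $\mathcal{O}_Y(-2H+K_Y)$ and $\mathcal{O}_Y(-2H)$. Restricting the Euler sequence and using $H^0(\mathcal{O}_Y(-H+K_Y))=0$, $H^1(\mathcal{O}_Y(-2H+K_Y))=0$, one obtains $H^0(T_{\mathbb{P}^M}|_Y(-2H+K_Y))=0$ and hence $h^0(N_{Y/\mathbb{P}^M}(-2H+K_Y)) \leq h^1(T_Y(-2H+K_Y))$; similarly $h^0(N_{Y/\mathbb{P}^M}(-2H)) \leq h^0(T_{\mathbb{P}^M}|_Y(-2H)) + h^1(T_Y(-2H))$, and the first term is controlled by $h^0(-2H)=0$ plus $h^1(-2H)=0$ so only $h^1(T_Y(-2H))-h^0(T_Y(-2H))$ survives as a net contribution. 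Collecting the surviving terms gives precisely the claimed bound $h^1(T_Y(-2H+K_Y)) + h^1(T_Y(-2H)) - h^0(T_Y(-2H)) + h^1(-2H-K_Y) + h^0(-2H-2K_Y)$.

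With the estimate in hand, the numerical vanishing assertions follow by feeding $2H = 2aC_0 + 2bf$ into the four Lemmas \ref{2C_0+ef-H}, \ref{2f-H}, \ref{-H-K_Y}, \ref{-H-2K_Y} and \Cref{T_Y(-H)} — that is, applying those lemmas with $H$ replaced by $2H$, equivalently with $(a,b)$ replaced by $(2a,2b)$. Each of the conditions $(1)$–$(4)$ in the statement is the region where all five contributing cohomology groups vanish: for instance the $h^0(-2H-2K_Y)$ term vanishes by \Cref{-H-2K_Y} applied to $2H$, the $h^1(T_Y(-2H+K_Y))$ term by \Cref{T_Y(-H)}$(2)$ applied to $2H$, and the $h^1(T_Y(-2H))-h^0(T_Y(-2H))$ terms by parts $(1)$ and $(3)$. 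The borderline case $a=2,b=2,e=0$ is exactly the locus where one single cohomology group, $h^0(-2H-2K_Y)$, contributes a $1$, giving $h^0(N_{\widetilde{Y}/\mathbb{P}^M}(-2\widetilde{H})) \leq 1$; the further vanishing $h^0(N_{\widetilde{Y}/\mathbb{P}^M}(-3\widetilde{H}))=0$ is obtained by running the identical argument with $-2\widetilde{H}$ replaced by $-3\widetilde{H}$, where now all line-bundle twists are negative enough that every group vanishes.

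Finally, the translation to a general smooth $K3$ surface $X \subset \mathbb{P}^M$ in $\mathcal{H}_{r,g}$ uses \Cref{degeneration into $K3$ carpets} and semicontinuity: since $X$ degenerates to the carpet $\widetilde{Y}$ built on $\mathbb{F}_e$ embedded by $aC_0+bf$ with $r=\gcd(a,b)$, one has $h^0(N_{X/\mathbb{P}^M}(-2)) \leq h^0(N_{\widetilde{Y}/\mathbb{P}^M}(-2\widetilde{H}))$, and one chooses the concrete $(a,b,e)$ realizing each pair $(r,g)$ exactly as in the proof of \Cref{non-extendability of K3 surfaces} — e.g. $a=r$, $b=rm$, and $e\in\{0,1\}$ according to the parity of $g$, so that $g=2m-e+1$. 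The case $(r=2,g=3)$ lands on $a=2,b=2,e=0$, producing the bound $h^0(N_{X/\mathbb{P}^M}(-2))\leq 1$, while all other listed $(r,g)$ fall into one of the vanishing regions $(1)$–$(4)$. The main obstacle I anticipate is the bookkeeping at the boundary case $a=2,b=2,e=0$: here the relevant twists sit right at the edge of the vanishing ranges in the lemmas, so one must verify carefully that only $h^0(-2H-2K_Y)$ contributes and track that the coboundary maps do not inflate the estimate — and then confirm the genuinely stronger vanishing at $-3\widetilde{H}$ that is needed to conclude $H^0(N_{X/\mathbb{P}^M}(-2))=0$ rather than merely a bound in the remaining small-genus non-prime cases.
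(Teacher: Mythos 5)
Your proposal is correct and follows essentially the same route as the paper: the paper tensors the three structural sequences \ref{1}, \ref{2}, \ref{3} by $-k\widetilde{H}$ for general $k\geq 2$ (covering the $-2\widetilde{H}$ and $-3\widetilde{H}$ cases uniformly), reduces to $h^0(N_{Y/\mathbb{P}^M}(-kH+K_Y))$ and $h^0(N_{Y/\mathbb{P}^M}(-kH))$ via the vanishings $h^0(-kH)=h^1(-kH)=h^0(-kH-K_Y)=0$, then uses the Euler and tangent sequences to land on exactly your $T_Y$-cohomology bound, and finally feeds $2H$ into Lemmas \ref{-H-K_Y}, \ref{-H-2K_Y}, \ref{T_Y(-H)}, with the single surviving contribution $h^0(-2H-2K_Y)=1$ at $a=2,b=2,e=0$. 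The only cosmetic difference is that you run $k=2$ and $k=3$ separately rather than parametrically.
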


\begin{proof}
    %We follow the proof of \cite[Theorem $2.2$]{BM24}, and indicate the changes.  
   Let $k \geq 2$. We tensor \Cref{1} by $-k\widetilde{H}$ to obtain
    $$h^0(N_{\widetilde{Y}/\mathbb{P}^M}(-k\widetilde{H})) \leq  h^0(N_{\widetilde{Y}/\mathbb{P}^M}(-kH+K_Y)) + h^0(N_{\widetilde{Y}/\mathbb{P}^M}(-k\widetilde{H}))$$
    Now tensoring \Cref{2} by $(-kH+K_Y)$ and noting that $h^0(-kH-K_Y) = 0$ we have,
    $$h^0(N_{\widetilde{Y}/\mathbb{P}^M}(-kH+K_Y)) = h^0(\mathcal{H}om(I_{\widetilde{Y}}/I_{Y}^2, \mathcal{O}_Y)(-kH+K_Y))$$
    Once again, tensoring \Cref{3}, by $-kH+K_Y$ and noting that $h^0(-kH) = h^1(-kH) = 0$, we have 
    $$h^0(\mathcal{H}om(I_{\widetilde{Y}}/I_{Y}^2, \mathcal{O}_Y)(-kH+K_Y)) = h^0(N_{Y/\mathbb{P}^M}(-kH+K_Y))$$

    Now tensoring \Cref{2} by $(-kH)$ we have,
    $$h^0(N_{\widetilde{Y}/\mathbb{P}^M}(-kH)) \leq h^0(\mathcal{H}om(I_{\widetilde{Y}}/I_{Y}^2, \mathcal{O}_Y)(-kH))+h^0(-kH-2K_Y)$$
    Once again, tensoring \Cref{3}, by $-kH$ and noting that $h^0(-kH-K_Y)) = 0$, we have 
    $$h^0(\mathcal{H}om(I_{\widetilde{Y}}/I_{Y}^2, \mathcal{O}_Y)(-kH+K_Y)) = h^0(N_{Y/\mathbb{P}^M}(-kH))+h^1(-kH-K_Y)$$
    
    %instead of $-\widetilde{H}$ and follow the proof verbatim. Equations $2.1-2.9$ in \cite[Theorem $2.2$]{BM24}, yields 
    Hence we have,
    $$h^0(N_{\widetilde{Y}/\mathbb{P}^M}(-k\widetilde{H})) \leq  h^0(N_{Y/\mathbb{P}^M}(-kH+K_Y)) + h^0(N_{Y/\mathbb{P}^M}(-kH)) +  h^1(-kH-K_Y) + h^0(-kH-2K_Y)$$

Now \Cref{9} and \Cref{11} yield %tensoring the exact sequence $2.10-2.11$ in \cite[Theorem $2.2$]{BM24} by $-kH$ now yields 
$$h^0(N_{Y/\mathbb{P}^M}(-kH)) \leq h^1(T_Y(-kH)) - h^0(T_Y(-kH))$$
%Similarly tensoring $2.10-2.11$ in \cite[Theorem $2.2$]{BM24} by $-kH+K_Y$ yields 
and
$$h^0(N_{Y/\mathbb{P}^M}(-kH+K_Y)) \leq h^1(T_Y(-kH+K_Y))$$
Hence we have 
$$h^0(N_{\widetilde{Y}/\mathbb{P}^N}(-k\widetilde{H})) \leq h^1(T_Y(-kH+K_Y)) + h^1(T_Y(-kH)) - h^0(T_Y(-kH)) +  h^1(-kH-K_Y) + h^0(-kH-2K_Y) $$
Now the rest of the proof follows from \Cref{-H-K_Y}, \Cref{-H-2K_Y}, \Cref{T_Y(-H)}. For the ease of the reader we point out that in the case $a = 2, b = 2, e = 0$, the only term that contributes in the upper bound of $h^0(N_{\widetilde{Y}/\mathbb{P}^N}(-2\widetilde{H}))$ is $h^0(-2H-2K_Y) = 1$. 
\end{proof}

%\begin{proposition}\label{r = 1, H^0(N(-2))}
%Let $\widetilde{Y} \hookrightarrow \mathbb{P}^M$ be a $K3$ carpet embedded by the complete linear series of a very ample line bundle $\widetilde{H}$ such that the embedding induced on the reduced part is $Y = \mathbb{F}_e \hookrightarrow \mathbb{P}^N$ embedded by the complete linear series of a very ample line bundle $H = aC_0+bf$. Assume $H^1(-H+K_Y) = 0$. Then $$h^0(N_{\widetilde{Y}/\mathbb{P}^N}(-2\widetilde{H})) \leq h^1(T_Y(-2H+K_Y)) + h^0(N_{Y/\mathbb{P}^M}(-2H))+ h^0(-2H-2K_Y) $$
%Consequently, $h^0(N_{\widetilde{Y}/\mathbb{P}^N}(-2\widetilde{H})) = 0$ if $a = 1, b \geq e+5/2$.
%Moreover, if $a = 1, b = 3, e = 1$, $h^0(N_{\widetilde{Y}/\mathbb{P}^N}(-2\widetilde{H})) \leq 1$
%\end{proposition}

\begin{definition}\label{Hilbert scheme of Fanos and K3}
    Let \textcolor{black}{$\mathcal{V}_{r,g}$ denote the Hilbert scheme of smooth anticanonically embedded Fano threefolds of index $r$ and genus $g$, i.e, anticanonically embedded Fano threefolds $V \subset \mathbb{P}^{2+r^2(g-1)}$ such that $-K_V$ is $r-$ divisible in $\operatorname{Pic}(V)$. Let $\mathcal{V}_{r,g,1}$ denote the closure inside $\mathcal{V}_{r,g}$ of the locus of points representing the Fano threefolds with Picard rank one.} Recall that $\mathcal{H}_{r,g}$ denotes the Hilbert scheme of $K3$ surfaces of index $r$ and genus $g$, i.e, $K3$ surfaces $X \subset \mathbb{P}^{1+r^2(g-1)}$ embedded by the complete linear series of $rB$ where $B$ is a primitive very ample line bundle with $B^2 = 2g-2$. \textcolor{black}{Denote by $C(X) \subset \mathbb{P}^{2+r^2(g-1)}$ the cone over such a $K3$ surface.} Let $\mathcal{F}_{r,g}$ denote the flag Hilbert scheme consisting of pairs $(V,X)$ where $V \in \mathcal{V}_{r,g,1}$ and $X$ is a hyperplane section of $V$ and hence $X \in \mathcal{H}_{r,g}$. There is natural projection map from $p: \mathcal{F}_{r,g} \to \mathcal{H}_{r,g}$.
\end{definition}

\color{black}
 We proved \Cref{corank one} by combining our results in \Cref{non-extendability of K3 surfaces} with \cite{CDS20} and \cite{ABS17}. However, using \Cref{H^0(N(-2))}, we independently recover Wahl's theorem on non surjectivity of the Gaussian-Wahl map (see \cite{W87},  \cite{BM87}, \cite{CM90}, \cite{V92}) in the non-prime case.

\begin{corollary}\label{Wahl}
 Let $X$ be a $K3$ surface and $C \subset X$ is a smooth curve with $C \in |rB|$ with $r \geq 2$. Then the Wahl map $\Phi_{\omega_C}$ is not surjective. 
\end{corollary}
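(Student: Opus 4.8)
The plan is to deduce non-surjectivity of $\Phi_{\omega_C}$ from the extendability of the canonical curve $C$, reading off the relevant cohomology from the surface $X$ via \Cref{H^0(N(-2))}. By adjunction $\omega_C = (K_X + C)|_C = rB|_C$, so restricting the embedding $X \subset \mathbb{P}^M$ by $|rB|$ cuts out $C$ as a canonical curve inside a hyperplane $\mathbb{P}^{M-1}$, with $g(C) = M$. Recall the identification, valid for a canonical curve, between the corank of the Gaussian--Wahl map and the extendability invariant underlying \Cref{corank one}:
$$\operatorname{cork}(\Phi_{\omega_C}) = \alpha(C) = h^0(N_{C/\mathbb{P}^{M-1}}(-1)) - M.$$
Thus it suffices to exhibit at least $M+1$ independent sections of $N_{C/\mathbb{P}^{M-1}}(-1)$.

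First I would record two facts about $X$. Since $C$ is a transverse hyperplane section of the smooth surface $X$, one has $N_{X/\mathbb{P}^M}|_C \cong N_{C/\mathbb{P}^{M-1}}$, so restricting the normal bundle sequence twisted by $-H$ gives
$$0 \to N_{X/\mathbb{P}^M}(-2H) \to N_{X/\mathbb{P}^M}(-H) \to N_{C/\mathbb{P}^{M-1}}(-1) \to 0.$$
Second, $\alpha(X) \geq 0$ always holds for a K3 surface: from the Euler sequence restricted to $X$ one gets $h^0(T_{\mathbb{P}^M}|_X(-H)) = M+1$, while $H^0(T_X(-H)) \subset H^0(T_X) = 0$ because a K3 surface carries no global vector fields; hence $h^0(N_{X/\mathbb{P}^M}(-H)) \geq M+1$.

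Now I would invoke \Cref{H^0(N(-2))}: for a general $X \in \mathcal{H}_{r,g}$ with $r \geq 2$ it gives $H^0(N_{X/\mathbb{P}^M}(-2H)) = 0$ whenever $(r = 2, g \geq 4)$ or $(r \geq 3, g \geq 3)$. In these ranges the restriction map $H^0(N_{X/\mathbb{P}^M}(-H)) \to H^0(N_{C/\mathbb{P}^{M-1}}(-1))$ is injective, so $h^0(N_{C/\mathbb{P}^{M-1}}(-1)) \geq h^0(N_{X/\mathbb{P}^M}(-H)) \geq M+1$, i.e. $\operatorname{cork}(\Phi_{\omega_C}) = \alpha(C) \geq 1$, which proves the statement for a general pair $(X,C)$. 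To pass from general to an arbitrary K3 surface with $C \in |rB|$, I would use that all such pairs vary in an irreducible family over the irreducible moduli $\mathcal{H}_{r,g}$, with $C$ moving in the irreducible system $|rB|$, and that $\operatorname{cork}(\Phi_{\omega_C})$ is upper semicontinuous (the source $\wedge^2 H^0(\omega_C)$ and target $H^0(\omega_C^{\otimes 3})$ of the Wahl map have locally constant rank by Riemann--Roch). Hence corank $\geq 1$ spreads from the general member to every member of the family.

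The delicate point is the boundary case $(r=2,g=3)$, where \Cref{H^0(N(-2))} only yields $h^0(N_{X/\mathbb{P}^M}(-2H)) \leq 1$, so the restriction map can fail to be injective by one dimension. Here I would recover the missing section either from the refinement $h^0(N_{X/\mathbb{P}^M}(-3H)) = 0$ in \Cref{H^0(N(-2))}, or from the fact that a general such $X$ is itself extendable to a Fano threefold (so $\alpha(X) \geq 1$ by the contrapositive of \Cref{ZL}, using \Cref{classification of mukai varieties intro}), which compensates the loss:
$$h^0(N_{C/\mathbb{P}^{M-1}}(-1)) \geq h^0(N_{X/\mathbb{P}^M}(-H)) - h^0(N_{X/\mathbb{P}^M}(-2H)) \geq (M+1+\alpha(X)) - 1 \geq M+1,$$
after which semicontinuity again extends the bound to every K3 in $\mathcal{H}_{2,3}$. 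I expect this residual genus-three case, together with the clean justification of the identification $\operatorname{cork}(\Phi_{\omega_C}) = \alpha(C)$ and of $N_{X/\mathbb{P}^M}|_C \cong N_{C/\mathbb{P}^{M-1}}$, to be the main obstacle; the remaining steps are the formal cohomological bookkeeping already assembled in \Cref{ZL for K3 carpets} and \Cref{H^0(N(-2))}.
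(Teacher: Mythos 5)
Your main argument is the same as the paper's: reduce to a general member of $\mathcal{H}_{r,g}$ by openness of surjectivity of $\Phi_{\omega_C}$, restrict the normal bundle sequence to get $0 \to H^0(N_{X/\mathbb{P}^M}(-2)) \to H^0(N_{X/\mathbb{P}^M}(-1)) \to H^0(N_{C/\mathbb{P}^{M-1}}(-1))$, kill the left term by \Cref{H^0(N(-2))}, and use $h^0(N_{X/\mathbb{P}^M}(-1)) \geq M+1$ to conclude $\operatorname{cork}(\Phi_{\omega_C}) = h^0(N_{C/\mathbb{P}^{M-1}}(-1)) - M \geq 1$. That part is correct and is exactly the paper's proof, with the Euler-sequence justification of $\alpha(X)\geq 0$ made explicit.

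Two points of divergence. First, for $(r,g)=(2,3)$ the paper simply observes that $g(C)=9$, so the source $\bigwedge^2 H^0(\omega_C)$ has dimension $\binom{9}{2}=36$ while the target has dimension $5g(C)-5=40$, and surjectivity fails for trivial reasons; your route via extendability of the general $X\in\mathcal{H}_{2,3}$ to a Fano threefold (hence $\alpha(X)\geq 1$ by the contrapositive of \Cref{ZL}) is logically sound and non-circular, but considerably heavier, and your alternative suggestion via $h^0(N_{X/\mathbb{P}^M}(-3))=0$ is a non sequitur: the kernel of $H^0(N_{X/\mathbb{P}^M}(-1)) \to H^0(N_{C/\mathbb{P}^{M-1}}(-1))$ is exactly $H^0(N_{X/\mathbb{P}^M}(-2))$, and vanishing of the third twist gives no control over it. Second, you omit the case $g=2$ (i.e.\ $B^2=2$), which the statement does not exclude and which the paper explicitly sets aside with a separate argument; there $B$ is a hyperelliptic polarization, $rB$ need not be very ample, and the whole embedded-degeneration framework does not apply (note that for $r=3$, $g=2$ one has $g(C)=10$ and source and target of $\Phi_{\omega_C}$ both have dimension $45$, so no dimension count saves you either). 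You should either restrict to $g\geq 3$ or supply the missing case as in \cite{CLM98}.
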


\begin{proof}
   Let $B^2 = 2g-2$. The case $g = 2$, can be worked out separately (for example, see \cite{CLM98}). For $r = 2, g = 3$, $g(C) = 9$ and $\Phi_{\omega_C}$ cannot be surjective since the domain is of dimension less than the target. For $r = 2, g \geq 4$ or $r \geq 3, g \geq 3$, consider the Hilbert scheme $\mathcal{H}_{r,g}$. It is enough to prove the statement for a general element of $\mathcal{H}_{r,g}$ since surjectivity of $\Phi_{\omega_C}$ is an open condition. Let $M = 1+r^2(g-1)$ Now we have an exact sequence
   $$0 \to H^0(N_{X/\mathbb{P}^M}(-2)) \to H^0(N_{X/\mathbb{P}^M}(-1)) \to H^0(N_{C/\mathbb{P}^{M-1}}(-1))$$
   By \Cref{H^0(N(-2))}, $H^0(N_{X/\mathbb{P}^M}(-2)) = 0$ and hence we have that $\operatorname{corank}(\Phi_{\omega_C}) = h^0(N_{C/\mathbb{P}^{M-1}}(-1))-M \geq h^0(N_{X/\mathbb{P}^M}(-1))-M \geq 1$.
\end{proof}

\color{black}

\begin{corollary}\label{upper bound on cone over $K3$ surface}

\textcolor{black}{Let $X \subset \mathbb{P}^{M}$ be a general $K3$ surface in $\mathcal{H}_{r,g}$ embedded by the complete linear series of $rB$, where $B$ is a primitive very ample line bundle, with $B^2 = 2g-2$ and $M = 1+r^2(g-1)$.}

\begin{enumerate}
    \item If $r \geq 2, g \geq 4$. Then the dimension of the tangent space of $\mathcal{V}_{r,g,1}$ at the point $C(X) \subset \mathbb{P}^{M+1}$ is given by 

\begin{equation}\label{upper bounds}
    \operatorname{dim}T_{C(X)}(\mathcal{V}_{r,g,1}) = h^0(N_{X/\mathbb{P}^M})+h^0(N_{X/\mathbb{P}^{M}}(-1)) = 18+(2+r^2(g-1))^2+\alpha(X)+(2+r^2(g-1))
\end{equation}

   \item If $r = 2, g = 3$. Then the dimension of the tangent space of $\mathcal{V}_{r,g,1}$ at the point $C(X) \subset \mathbb{P}^{M+1}$ is given by 

\begin{equation}\label{upper bounds for r = 2, g = 3}
    \operatorname{dim}T_{C(X)}(\mathcal{V}_{r,g,1}) \leq h^0(N_{X/\mathbb{P}^M})+h^0(N_{X/\mathbb{P}^{M}}(-1)) + 1 = 18+(2+r^2(g-1))^2+\alpha(X)+(2+r^2(g-1))+1
\end{equation}

\end{enumerate}

\end{corollary}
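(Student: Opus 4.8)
The plan is to compute the Zariski tangent space to the Hilbert scheme at $[C(X)]$, namely $H^0(N_{C(X)/\mathbb{P}^{M+1}})$, exactly, and then to match it against the family of genuine smooth Fanos; recall that $T_{C(X)}(\mathcal{V}_{r,g,1})$ is a priori only a subspace of $H^0(N_{C(X)/\mathbb{P}^{M+1}})$. First I would record that $C(X)$ is normal and Cohen--Macaulay: since $X$ is a projectively normal $K3$ surface one has $H^1(X,\mathcal{O}_X(n)) = 0$ for every $n \in \mathbb{Z}$ (Kodaira vanishing for $n > 0$, $H^1(\mathcal{O}_X) = 0$ for $n = 0$, Serre duality for $n < 0$), which is precisely the criterion for the affine cone to be Cohen--Macaulay. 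Hence $N_{C(X)/\mathbb{P}^{M+1}} = \mathcal{H}om(\mathcal{I}/\mathcal{I}^2,\mathcal{O}_{C(X)})$ is $S_2$, and since the vertex $v$ has codimension $3$ its global sections may be computed on $C(X)\setminus\{v\}$.

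Next I would exploit the cone's $\mathbb{C}^*$--action. Projection from $v$ presents $\mathbb{P}^{M+1}\setminus\{v\}$ as the total space of $\mathcal{O}_{\mathbb{P}^M}(1)$ (for which $\pi_*\mathcal{O} = \bigoplus_{k\ge 0}\mathcal{O}_{\mathbb{P}^M}(-k)$), under which $C(X)\setminus\{v\} = \pi^{-1}(X)$ is the total space of $\mathcal{O}_X(1)$ over $X$ and $N_{C(X)/\mathbb{P}^{M+1}}|_{C(X)\setminus\{v\}} \cong \pi^*N_{X/\mathbb{P}^M}$. Since $\pi\colon \pi^{-1}(X)\to X$ is affine, the projection formula yields the graded identity $h^0(N_{C(X)/\mathbb{P}^{M+1}}) = \sum_{k\ge 0} h^0(N_{X/\mathbb{P}^M}(-k))$. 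As a sanity check, running the same argument for a degree--$d$ hypersurface $X\subset\mathbb{P}^M$ reproduces the classical value $h^0(\mathcal{O}_{C(X)}(d)) = \binom{M+1+d}{M+1}-1$.

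It then remains to collapse the sum. Because $\mathcal{O}_X(1)$ is very ample, multiplication by a section gives injections $N_{X/\mathbb{P}^M}(-k)\hookrightarrow N_{X/\mathbb{P}^M}(-j)$ whenever $k\ge j$, so $h^0(N_X(-j)) = 0$ forces $h^0(N_X(-k)) = 0$ for all $k\ge j$. By \Cref{H^0(N(-2))} and semicontinuity a general $X$ satisfies $h^0(N_{X/\mathbb{P}^M}(-2)) = 0$ when $(r=2,\,g\ge 4)$ or $(r\ge 3,\,g\ge 3)$, so the sum collapses to $h^0(N_X)+h^0(N_X(-1))$, which is case $(1)$; when $(r=2,\,g=3)$ the same proposition gives $h^0(N_X(-2))\le 1$ and $h^0(N_X(-3)) = 0$, leaving the extra $+1$ of case $(2)$. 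Finally I would substitute $h^0(N_{X/\mathbb{P}^M}(-1)) = \alpha(X)+M+1 = \alpha(X)+(2+r^2(g-1))$, which is the definition of $\alpha$, and $h^0(N_{X/\mathbb{P}^M}) = 18+(M+1)^2 = 18+(2+r^2(g-1))^2$, the dimension of the smooth Hilbert scheme $\mathcal{H}_{r,g}$ (nineteen moduli directions for polarized $K3$'s plus $\dim\mathrm{PGL}_{M+1}$), obtaining the stated numerology.

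The hard part is upgrading, in case $(1)$, the inequality $\dim T_{C(X)}(\mathcal{V}_{r,g,1})\le h^0(N_{C(X)/\mathbb{P}^{M+1}})$ to an equality. For this I would use that $C(X)$ is smoothable to a smooth Fano threefold of Picard rank one, so that $[C(X)]$ indeed lies on $\mathcal{V}_{r,g,1}$, and then compare the count above with the dimension of the locus of such smooth Fanos; their agreement forces $[C(X)]$ to be a smooth point of $\mathcal{V}_{r,g,1}$ and the tangent space to exhaust all of $H^0(N_{C(X)/\mathbb{P}^{M+1}})$. In case $(2)$ only the upper bound survives, because \Cref{H^0(N(-2))} pins down $h^0(N_X(-2))$ only up to one. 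The other delicate point, on which the clean graded identity rests, is the Cohen--Macaulayness of $C(X)$ used to extend sections of the normal sheaf across the vertex.
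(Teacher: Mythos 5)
Your argument is correct and follows essentially the same route as the paper, which simply invokes the identity $\dim T_{C(X)} = \sum_{k\ge 0} h^0(N_{X/\mathbb{P}^M}(-k))$ as a known fact and then collapses the sum using \Cref{H^0(N(-2))}; you have merely supplied the standard derivation of that graded identity via the cone structure, plus the same numerology for $h^0(N_{X/\mathbb{P}^M})$ and $h^0(N_{X/\mathbb{P}^M}(-1))$. Your added care in distinguishing $T_{C(X)}(\mathcal{V}_{r,g,1})$ from the full tangent space $H^0(N_{C(X)/\mathbb{P}^{M+1}})$ of the Hilbert scheme is a point the paper glosses over, and is harmless here since the corollary is only ever used downstream as an upper bound.
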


\begin{proof}
    The proof of the corollary follows from \Cref{H^0(N(-2))} and the fact that $$\operatorname{dim}T_{C(X)}(\mathcal{V}_{r,g,1}) = \Sigma_{k \geq 0} h^0(N_{X/\mathbb{P}^M}(-k))$$
\end{proof}

\begin{lemma}\label{hyperplane section of a general fano is a general K3}

Let $V$ be a general element of $\mathcal{V}_{r,g,1}$. Then a general hyperplane section of $V$ corresponds to a general $K3$ surface in $\mathcal{H}_{r,g}$.
    
\end{lemma}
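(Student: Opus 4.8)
The plan is to recast the lemma as the dominance of the projection $p\colon\mathcal{F}_{r,g}\to\mathcal{H}_{r,g}$, and then to prove dominance by a first--order computation. Let $q\colon\mathcal{F}_{r,g}\to\mathcal{V}_{r,g,1}$, $(V,X)\mapsto V$, be the other projection, and fix an irreducible component $\mathcal{V}'\subseteq\mathcal{V}_{r,g,1}$ whose general member is a smooth Picard rank one Fano, so that ``$V$ general'' means $V$ general in $\mathcal{V}'$. Over $\mathcal{V}'$ the fibres of $q$ are the hyperplanes in $\mathbb{P}^{M+1}$, i.e.\ $\check{\mathbb{P}}^{M+1}$, so $\mathcal{F}'=q^{-1}(\mathcal{V}')$ is irreducible and its general point is a pair $(V,X)$ with $V$ general and $X$ a general hyperplane section. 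Since $\mathcal{H}_{r,g}$ is irreducible and, by \Cref{Hilbert scheme of Fanos and K3}, $p$ lands in $\mathcal{H}_{r,g}$---concretely, writing $\operatorname{Pic}(V)=\mathbb{Z}A$ with $-K_V=rA$, Grothendieck--Lefschetz embeds $\operatorname{Pic}(V)\hookrightarrow\operatorname{Pic}(X)$, so the hyperplane class restricts to $\mathcal{O}_X(1)=(-K_V)|_X=rB$ with $B:=A|_X$ and $B^2=2g-2$---the lemma is equivalent to $p|_{\mathcal{F}'}$ being dominant.

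To prove dominance I would show that $dp$ is surjective at a general pair $(V,X)$. As $X$ is a projectively normal $K3$ surface, $H^1(N_{X/\mathbb{P}^M})=0$, so $\mathcal{H}_{r,g}$ is smooth at $[X]$ with $T_{[X]}\mathcal{H}_{r,g}=H^0(N_{X/\mathbb{P}^M})$. I would trace $dp$ through the normal bundle sequences
$$0\to N_{X/\mathbb{P}^M}\to N_{X/\mathbb{P}^{M+1}}\to\mathcal{O}_X(1)\to 0,\qquad 0\to\mathcal{O}_X(1)=N_{X/V}\to N_{X/\mathbb{P}^{M+1}}\to N_{V/\mathbb{P}^{M+1}}\big|_X\to 0,$$
the identification $N_{X/V}=\mathcal{O}_X(1)$ coming from $X\in|-K_V|$ and $V$ being anticanonically embedded. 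Deformations of the pair move $X$ both inside the deforming $V$ and inside the deforming hyperplane; chasing these through the two sequences, the image of $dp$ is controlled by the restriction $H^0(N_{V/\mathbb{P}^{M+1}})\to H^0(N_{V/\mathbb{P}^{M+1}}|_X)$ together with the hyperplane--motion summand. Via $0\to N_{V/\mathbb{P}^{M+1}}(-1)\to N_{V/\mathbb{P}^{M+1}}\to N_{V/\mathbb{P}^{M+1}}|_X\to 0$ this restriction is surjective once $H^1(N_{V/\mathbb{P}^{M+1}}(-1))=0$, and the coboundary analysis of the first sequence then shows that $dp$ hits all of $H^0(N_{X/\mathbb{P}^M})$.

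A second, more intrinsic way to locate a point of surjectivity is to degenerate $V$ to the cone $C(X)$. By the computation in \Cref{upper bound on cone over $K3$ surface}, $T_{C(X)}\mathcal{V}_{r,g,1}=\bigoplus_{k\ge 0}H^0(N_{X/\mathbb{P}^M}(-k))$, whose degree--zero summand $H^0(N_{X/\mathbb{P}^M})$ parametrises cones over deformations of $X$. Since a general hyperplane section of $C(X)$ is again $X$, this summand maps isomorphically under $dp$ onto $T_{[X]}\mathcal{H}_{r,g}$, and dominance follows provided the cones over the general $K3$ surfaces lie in $\mathcal{V}_{r,g,1}$.

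I expect the crux to be exactly this input: either the vanishing $H^1(N_{V/\mathbb{P}^{M+1}}(-1))=0$ for the general Fano, or equivalently the membership $C(X)\in\mathcal{V}_{r,g,1}$ (genuine extendability of the general $K3$), which must be obtained without a circular appeal to the classification under proof. As a numerical check, comparing $\dim\mathcal{F}'=\dim\mathcal{V}'+(M+1)$ with $\dim\mathcal{H}_{r,g}=18+(M+1)^2$ and using the value $\dim T_{C(X)}\mathcal{V}_{r,g,1}=18+(M+1)^2+(M+1)+\alpha(X)$ from \Cref{upper bound on cone over $K3$ surface} forces the generic fibre of $p$ to have the expected dimension $2(M+1)+\alpha(X)$ of the family of Fano extensions of a fixed $X$, consistent with dominance.
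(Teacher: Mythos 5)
Your proposal follows essentially the route the paper intends: the paper's own ``proof'' is a one-line citation to \cite[Lemma 3.7]{CLM98} and \cite{Bea02}, and what you have written out is precisely that standard deformation-theoretic argument (dominance of $p|_{\mathcal{F}'}$ via surjectivity of $dp$, traced through the two normal bundle sequences). The reduction is correct, the dimension check at the end is consistent with \Cref{upper bound on cone over $K3$ surface}, and your second route via the cone $C(X)$ is, as you suspect, genuinely circular in this paper: the membership $C(X)\in\mathcal{V}_{r,g,1}$ for \emph{general} $X$ is exactly what \Cref{classification of fanos with index two} deduces \emph{from} this lemma (via the degeneration of a projectively Cohen--Macaulay $V$ to the cone over its hyperplane section), so it cannot be used as input here.

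The one genuine gap is that you flag the vanishing $H^1(N_{V/\mathbb{P}^{M+1}}(-1))=0$ as ``the crux'' but do not prove it; it is where the Picard rank one hypothesis enters, and it closes as follows. A general element of $\mathcal{V}_{r,g,1}$ is a smooth anticanonically embedded Fano threefold with $\rho(V)=1$, and $\mathcal{O}_V(-1)=K_V$. Twisting the Euler sequence on $\mathbb{P}^{M+1}$ restricted to $V$ by $K_V$ gives $0\to K_V\to\mathcal{O}_V^{\oplus M+2}\to T_{\mathbb{P}^{M+1}}|_V(K_V)\to 0$; since $h^1(\mathcal{O}_V)=h^2(K_V)=0$ one gets $H^1(T_{\mathbb{P}^{M+1}}|_V(K_V))=0$ and $H^2(T_{\mathbb{P}^{M+1}}|_V(K_V))\cong H^3(K_V)\cong\mathbb{C}$. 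Hence from $0\to T_V(K_V)\to T_{\mathbb{P}^{M+1}}|_V(K_V)\to N_{V/\mathbb{P}^{M+1}}(-1)\to 0$ one has
\begin{equation*}
H^1(N_{V/\mathbb{P}^{M+1}}(-1))\;=\;\ker\bigl(H^2(T_V\otimes K_V)\to H^2(T_{\mathbb{P}^{M+1}}|_V\otimes K_V)\bigr),
\end{equation*}
and $T_V\otimes K_V\cong\Omega^2_V$, so by Serre duality $H^2(T_V\otimes K_V)^{\vee}\cong H^1(\Omega_V)\cong\mathbb{C}^{\rho(V)}=\mathbb{C}$. The dual of the displayed map is the restriction $H^1(\Omega_{\mathbb{P}^{M+1}}|_V)\to H^1(\Omega_V)$, which sends the generator $c_1(\mathcal{O}(1))$ to $c_1(\mathcal{O}_V(1))\neq 0$; so the map is nonzero between one-dimensional spaces, hence injective, and the kernel vanishes. (For $\rho(V)\geq 2$ the kernel is $(\rho-1)$-dimensional and the lemma indeed fails, which is why the hypothesis is essential.) Two minor points you should also make explicit: that every embedded deformation of $V$ remains an anticanonically embedded Fano (so that the lifted deformations of $X$ stay inside $q^{-1}(\mathcal{V}')$), which follows from $H^1(N_{V/\mathbb{P}^{M+1}})=0$ and openness; and that $B=A|_X$ is primitive on the \emph{general} hyperplane section, which requires a Noether--Lefschetz statement for the anticanonical system rather than just the injectivity of $\operatorname{Pic}(V)\to\operatorname{Pic}(X)$.
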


\begin{proof}
    The proof follows from standard deformation theory arguments (see for example \cite[Lemma $3.7$]{CLM98}, or \cite{Bea02}).
\end{proof}

\begin{theorem}\label{classification of fanos with index two}
\textcolor{black}{Consider} $\mathcal{V}_{r,g,1}$ \textcolor{black}{for} $r \geq 2$ and genus $g \geq 3$. Then 
\begin{enumerate}
    \item $\mathcal{V}_{r,g,1} = \varnothing$ if $(r = 2, g \geq 7)$, $(r = 3, g = 3)$, $(r = 3, g \geq 5)$, $(r = 4, g \geq 4)$ or $(r = 5, g \geq 3)$. 
    \item For $(r = 2, 3 \leq g \leq 6), (r = 3, g = 4), (r = 4, g = 3)$, $\mathcal{V}_{r,g,1}$ is irreducible, and the families of Fano and Iskovskih form an open dense irreducible subset of $\mathcal{V}_{r,g,1}$. The general fiber of the projection map $p: \mathcal{F}_{r,g} \to \mathcal{H}_{r,g}$ as defined in \Cref{Hilbert scheme of Fanos and K3} is irreducible. Upto projective transformations, a general $K3$ surface in $\mathcal{H}_{2,6}, \mathcal{H}_{3,4}$ or $ \mathcal{H}_{4,3}$ is contained in a unique Fano threefold in $\mathcal{V}_{2,6,1}, \mathcal{V}_{3,4,1}$ or $ \mathcal{V}_{4,3,1}$ respectively.
\end{enumerate}
\end{theorem}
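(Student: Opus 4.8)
The plan is to use the cone $C(X) \subset \mathbb{P}^{M+1}$ over a general $K3$ surface $X \in \mathcal{H}_{r,g}$ as the link between the two Hilbert schemes $\mathcal{H}_{r,g}$ and $\mathcal{V}_{r,g,1}$, following the principle of \cite{CLM98}. The observation driving everything is that a smooth $V \in \mathcal{V}_{r,g,1}$ is a genuine (non-cone) one-extension of its hyperplane section, and by \Cref{hyperplane section of a general fano is a general K3} a general hyperplane section of a general $V$ is a general $K3$ surface in $\mathcal{H}_{r,g}$. Thus existence of Fano threefolds is equivalent to extendability of the general $K3$, while counting the Fano threefolds through a fixed $X$ amounts to counting its extensions.

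For part (1) I would argue by contradiction. For every listed pair except $(r,g) = (3,3)$, \Cref{non-extendability of K3 surfaces} gives $\alpha(X) = 0$ for a general $X \in \mathcal{H}_{r,g}$, so by \Cref{ZL} (noting $X$ is a $K3$, hence not a quadric) $X$ is non-extendable; a nonempty $\mathcal{V}_{r,g,1}$ would produce a smooth, non-cone extension of such an $X$, a contradiction. The pair $(3,3)$ escapes non-extendability since there $\alpha(X) \leq 4$, so I would instead invoke the numerical constraint imposed by the anticanonical embedding: any $V \in \mathcal{V}_{3,3}$ has $-K_V = 3H$ with $H = -K_V/3 \in \operatorname{Pic}(V)$ and $(-K_V)^3 = 2r^2(g-1)$ (read off from $h^0(-K_V) = \tfrac12(-K_V)^3 + 3 = 3 + r^2(g-1)$), forcing the integer $H^3 = 2(g-1)/r$ to equal $4/3$, which is absurd; hence $\mathcal{V}_{3,3,1} = \varnothing$.

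For part (2) the strategy is a tangent-space computation at the cone point. First, the classical del Pezzo threefolds (for $r = 2$, $3 \leq g \leq 6$), the quadric threefold ($r = 3$, $g = 4$) and $\mathbb{P}^3$ ($r = 4$, $g = 3$) furnish an irreducible, $\mathrm{PGL}$-invariant locus $\mathcal{W} \subseteq \mathcal{V}_{r,g,1}$ whose general member has a general $K3$ hyperplane section; degenerating such a Fano to the cone over its section shows $C(X) \in \overline{\mathcal{W}}$ for general $X$. Next I would compute $\dim T_{C(X)}(\mathcal{V}_{r,g,1})$ from \Cref{upper bound on cone over $K3$ surface}, feeding in the vanishing (or the $\leq 1$ bound when $(r,g) = (2,3)$) of $H^0(N_{X/\mathbb{P}^M}(-2))$ from \Cref{H^0(N(-2))} and the exact value of $\alpha(X)$ from \Cref{ZL for lower genus $K3$ surfaces} --- the upper bounds there being equalities once existence of $\mathcal{W}$ is known (cf. \Cref{equality of alpha(x)}). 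Comparing this number with $\dim \overline{\mathcal{W}}$, computed from the automorphism groups and moduli of the classical Fanos, I expect exact agreement, which shows $C(X)$ is a smooth point of $\mathcal{V}_{r,g,1}$. Since every component of $\mathcal{V}_{r,g,1}$ meets the $\mathrm{PGL}$-orbit of $C(X)$ (its general member has a general $K3$ section, by \Cref{hyperplane section of a general fano is a general K3} and irreducibility of $\mathcal{H}_{r,g}$) and $\overline{\mathcal{W}}$ is $\mathrm{PGL}$-invariant, smoothness at $C(X)$ forces $\mathcal{V}_{r,g,1} = \overline{\mathcal{W}}$, giving irreducibility and density of the Fano--Iskovskih locus.

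The fiber and uniqueness assertions come from the same analysis. The fiber $p^{-1}(X)$ parametrizes the extensions of $X$ to a Fano threefold, a space whose dimension is governed by $\alpha(X)$; smoothness of the cone point propagates to irreducibility of the general fiber. For $\mathcal{H}_{2,6}, \mathcal{H}_{3,4}, \mathcal{H}_{4,3}$ one has $\alpha(X) = 1$, so $X$ is precisely one-extendable and admits a unique non-cone extension up to projective transformation, necessarily the Fano threefold. The hard part will be the exact dimension bookkeeping in part (2): one must compute $\dim \overline{\mathcal{W}}$ independently (through $\operatorname{Aut}(V)$ and the moduli of each classical Fano) and check that it matches $\dim T_{C(X)}(\mathcal{V}_{r,g,1})$ on the nose, since it is the equality --- not a mere inequality --- that forces smoothness of the cone point and thereby excludes spurious components. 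Securing this equality is exactly why the sharpness of the $\alpha(X)$-bounds in \Cref{ZL for lower genus $K3$ surfaces} matters, and it is here that the carpet computation (the upper bound) and the existence of the classical families (the matching lower bound) must be played against each other.
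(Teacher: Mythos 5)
Your proposal is correct and follows essentially the same route as the paper: part (1) via non-extendability from \Cref{non-extendability of K3 surfaces} plus the integrality obstruction $H^3 = 2(g-1)/r \notin \mathbb{Z}$ for $(r,g)=(3,3)$, and part (2) via degeneration of a projectively Cohen--Macaulay Fano to the cone over its hyperplane section, followed by matching the tangent-space bound at $C(X)$ from \Cref{upper bound on cone over $K3$ surface} and \Cref{ZL for lower genus $K3$ surfaces} against the dimension $h^0(N_{V/\mathbb{P}^{M+1}})$ of the classical Fano--Iskovskih families. The fiber-irreducibility and uniqueness arguments you sketch are likewise the ones the paper carries out explicitly.
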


\begin{proof}
  \noindent\textit{Proof of $(1)$} First note that for every $r$, there is a unique component $\mathcal{H}_{r,g}$ of the Hilbert scheme parameterizing $K3$ surfaces $X \subset \mathbb{P}^{h^0(rB)-1}$ embedded by the complete linear of a very ample line bundle $rB$, where $B^2 = 2g-2$. Now excepting the case, $(r = 3, g = 3)$, part $(1)$ follows from \Cref{non-extendability of K3 surfaces}. For the case $(r=3, g=3)$, note that if the hyperplane section of a Fano threefold $V$ in  $\mathcal{V}_{r,g,1}$ be denoted by $rL$ and $L^3 = d$, then computing $h^0(rL) = h^0(rB)-1$ by Riemann-Roch theorem, we have $d = 2(g-1)/r$. But for $r = 3, g = 3$, $d$ cannot be an integer. This resolves the case. \par
  \textit{Proof of $(2)$} First one notes that for each $r$ and $g$, $\mathcal{H}_{r,g}$ is irreducible. Now the list of examples of Fano-Iskovskih-Mori-Mukai (see for example \cite[Table $3.1$]{CLM98}), show that in each of the cases mentioned by part $(2)$, there exist $K3$ surfaces in $\mathcal{H}_{r,g}$ which are extendable to smooth Fano threefolds. Hence by \Cref{hyperplane section of a general fano is a general K3}, in each of the cases, $\mathcal{H}_{r,g}$ is dominated by $\mathcal{V}_{r,g,1}$. We end the proof by showing, $\mathcal{V}_{r,g,1}$ has a unique irreducible component and that the the examples of Fano-Iskovskih-Mori-Mukai form an open dense subset of $\mathcal{V}_{r,g,1}$.  To see this, we observe first as in \cite{CLM93} and \cite{CLM98}, that a projectively Cohen-Macaulay scheme degenerates along a flat family to the cone over its hyperplane section. Hence once again using, \Cref{hyperplane section of a general fano is a general K3}, it is enough to show that for a general $K3$ surface $X \subset \mathbb{P}^{M}$ in $\mathcal{H}_{r,g}$, the cone $C(X) \subset \mathbb{P}^{M+1}$ is a smooth point of $\mathcal{V}_{r,g,1}$. For the stated values of pairs $(r,g)$, 
 % \Cref{upper bounds} and \Cref{upper bounds for r = 2, g = 3} in 
  \Cref{upper bound on cone over $K3$ surface} along with \Cref{ZL for lower genus $K3$ surfaces}, gives an upper bound to $\operatorname{dim}T_{C(X)}(\mathcal{V}_{r,g,1})$ which is listed as follows 
  \begin{enumerate}
     % \item $\operatorname{dim}T_{C(X)}(\mathcal{V}_{1,6,1}) \leq 85$

     % \item $\operatorname{dim}T_{C(X)}(\mathcal{V}_{1,7,1}) \leq 98$

     % \item $\operatorname{dim}T_{C(X)}(\mathcal{V}_{1,8,1}) \leq 114$

     % \item $\operatorname{dim}T_{C(X)}(\mathcal{V}_{1,9,1}) \leq 132$
      
     % \item $\operatorname{dim}T_{C(X)}(\mathcal{V}_{1,10,1}) \leq 153$

     % \item $\operatorname{dim}T_{C(X)}(\mathcal{V}_{1,12,1}) \leq 201$
      
      \item $\operatorname{dim}T_{C(X)}(\mathcal{V}_{2,3,1}) \leq 139$

      \item $\operatorname{dim}T_{C(X)}(\mathcal{V}_{2,4,1}) \leq 234$

      \item $\operatorname{dim}T_{C(X)}(\mathcal{V}_{2,5,1}) \leq 363$

      \item $\operatorname{dim}T_{C(X)}(\mathcal{V}_{2,6,1}) \leq 525$

      \item $\operatorname{dim}T_{C(X)}(\mathcal{V}_{3,4,1}) \leq 889$

      \item $\operatorname{dim}T_{C(X)}(\mathcal{V}_{4,3,1}) \leq 1209$
  \end{enumerate}
Now for a smooth Fano threefold $V \in \mathcal{V}_{r,g,1}$, $V$ represents a smooth point of $\mathcal{V}_{r,g,1}$, since embedded deformations of an anticanonically embedded Fano variety of dimension at least $3$ can be shown to be unobstructed using the vanishing of $h^1(N_{V/\mathbb{P}^{M+1}})$ by Kodaira-Nakano vanishing theorem. Therefore the dimension of $\mathcal{V}_{r,g,1}$ at a smooth point is given by $\operatorname{dim}T_V(\mathcal{V}_{r,g,1}) = h^0(N_{V/\mathbb{P}^{M+1}})$. Now using the descriptions of the examples of Fano-Iskovskih-Mori-Mukai, one can calculate the dimension $h^0(N_{V/\mathbb{P}^{M+1}})$ (see for example \cite[Table $2$, pg $666$]{CLM93} and \cite[Table $3.1$]{CLM98}) to check that in each case, $\operatorname{dim}T_V(\mathcal{V}_{r,g,1})$ is exactly the upper bound just obtained. Hence $C(X)$ is also a smooth point of $\mathcal{V}_{r,g,1}$. \par
To see the statement on the irreducibility of fibers of $p$, note that for a general $X \subset V$, we have an exact sequence 
$$0 \to H^0(N_V(-1)) \to H^0(N_V) \to H^0(N_X) \to 0$$
The surjectivity arises from the fact that from \Cref{hyperplane section of a general fano is a general K3}, one can show that every embedded deformation of $X$ can be lifted to an embedded deformation of $V$. Since $H^1(N_{V/\mathbb{P}^{M+1}}) = 0$, this implies $H^1(N_{V/\mathbb{P}^{M+1}}(-1)) = 0$. This implies that $V$ is a smooth point of the fiber and that the dimension of the fiber at $V$ is $h^0(N_V(-1))$. Now once again the cone $C(X)$ lies in every irreducible component of the fiber and the tangent space to the fiber at $C(X)$ is given by 
$$h^0(N_X(-1))+h^0(N_X(-2))$$
But we already checked that 
$$h^0(N_V(-1)) = h^0(N_V) - h^0(N_X) = h^0(N_X(-1))+h^0(N_X(-2))$$
This implies that $C(X)$ is a smooth point of the fiber and hence the fiber is irreducible. \par
The last statement follows from the fact that $\alpha(X) = 1$ in these cases.

\end{proof}

\color{black}
\begin{definition}\label{Mukai varieties}
Let $\mathcal{V}_{n,r,g}$ denote the Hilbert scheme of smooth embedded Fano varieties of dimension $n \geq 4$, index $r(n-2)$ and genus $g$ with a canonical curve section. The surface sections of such varieties are $K3$ surfaces $X \in \mathcal{H}_{r,g}$ and hence their curve sections are canonical curves of genus $1+r^2(g-1)$. Denote by $\mathcal{V}_{n,r,g,1}$, the closure in $\mathcal{V}_{n,r,g}$ of the locus parameterizing the varieties with Picard number one. Such varieties are called Mukai varieties.
\end{definition}

\color{black}

\begin{theorem}\label{classification of Mukai varieties with index two}
\textcolor{black}{Consider} $\mathcal{V}_{n,r,g,1}$ \textcolor{black}{with index $r \geq 2$ and genus $g \geq 3$.} Then 
\begin{enumerate}
    \item $\mathcal{V}_{n,r,g,1} = \varnothing$ if $(r = 2, 3 \leq g \leq 4)$, $(n \geq 6, r = 2, g = 5)$, $(r = 2, g \geq 6)$, $(r = 3, g \geq 3)$, or $(r = 4, g \geq 3)$ . 
    \item $\mathcal{V}_{4,2,5,1}$ and $\mathcal{V}_{5,2,5,1}$ are irreducible, and the families of Mukai form an open dense irreducible subset of the components.
\end{enumerate}
\end{theorem}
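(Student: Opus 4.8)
The plan is to run the ``general principle'' of \cite{CLM98} that underlies \Cref{classification of fanos with index two}, but to observe that in dimension $n \geq 4$ the combination of the canonical-section hypothesis with $\operatorname{Pic} = \mathbb{Z}$ pins the Fano index down so rigidly that the Kobayashi--Ochiai classification takes over and does most of the work. First I would record that for a smooth $V \in \mathcal{V}_{n,r,g,1}$ the canonical-curve-section condition forces $-K_V = (n-2)H$, where $H$ is the hyperplane class; since the general surface section is a $K3$ in $\mathcal{H}_{r,g}$, one has $H = rA$ with $A$ the primitive generator of $\operatorname{Pic}(V) = \mathbb{Z}A$ (using Grothendieck--Lefschetz to compare $\operatorname{Pic}(V)$ with the $K3$ section, together with the analogue of \Cref{hyperplane section of a general fano is a general K3}). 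Hence the Fano index of $V$ measured against $A$ equals exactly $r(n-2)$.

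For part $(1)$ I would feed this into the Kobayashi--Ochiai bound: a smooth Fano $n$-fold has index $\leq n+1$, with equality only for $\mathbb{P}^n$ and index $n$ only for the quadric $Q^n$. The inequality $r(n-2) \leq n+1$ rearranges to $n \leq 2 + 3/(r-1)$, so for $r \geq 3$ no smooth member exists as soon as $n \geq 4$, settling $(r = 3, g \geq 3)$ and $(r = 4, g \geq 3)$ at once. For $r = 2$ it forces $n \leq 5$, which already gives emptiness for $n \geq 6$ (in particular the case $(n \geq 6, r = 2, g = 5)$); in the two boundary dimensions the index equals $n$ and $n+1$, so Kobayashi--Ochiai identifies $V$ with $Q^4$, respectively $\mathbb{P}^5$, re-embedded by $|2A|$. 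Computing the surface section of each of these $2$-uple embeddings (an intersection of three quadrics in $\mathbb{P}^5$, hence a $K3$ of genus $5$) shows that the only attainable invariant is $g = 5$, so $\mathcal{V}_{n,2,g,1} = \varnothing$ for every $g \neq 5$; this disposes of $(r = 2, 3 \leq g \leq 4)$ and $(r = 2, g \geq 6)$.

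For part $(2)$ the same classification shows that the smooth locus of $\mathcal{V}_{4,2,5,1}$ (resp. $\mathcal{V}_{5,2,5,1}$) is a single $\operatorname{PGL}$-orbit, namely the $2$-uple re-embeddings of $Q^4$ (resp. $\mathbb{P}^5$), so it is irreducible and the Mukai family is open and dense in it. To conclude that the closure is a single reduced component I would reproduce the cone argument of \cite{CLM98}: since each projectively Cohen--Macaulay member degenerates to the iterated cone $C^{n-2}(X)$ over its $K3$ section $X \in \mathcal{H}_{2,5}$, it suffices to check that this iterated cone is a smooth point. Here the surface input enters through $\alpha(X) = 3$ from \Cref{ZL for lower genus $K3$ surfaces} and the vanishing $h^0(N_{X/\mathbb{P}^M}(-2)) = 0$ from \Cref{H^0(N(-2))}: iterating the tangent-space formula of \Cref{upper bound on cone over $K3$ surface} through the $n-2$ cone steps gives $\dim T_{C^{n-2}(X)} = h^0(N_X) + (n-2)\,h^0(N_X(-1))$, which I would match against $h^0(N_{V/\mathbb{P}^{M+n-2}})$ for $V = Q^4, \mathbb{P}^5$ to force equality and hence smoothness of the cone point.

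The step I expect to be the main obstacle is this last dimension match in part $(2)$: propagating the single-cone computation of \Cref{upper bound on cone over $K3$ surface} through $n-2$ successive cones and controlling the associated coboundary maps so that the resulting bound on $\dim T_{C^{n-2}(X)}$ is an equality rather than a strict inequality. By contrast the emptiness in part $(1)$ is essentially formal once the index identity $r(n-2)$ is in hand; the one point there that needs care, and which I would verify first, is that this is the genuine Fano index, i.e. that $A$ is primitive and $\operatorname{Pic}(V) = \mathbb{Z}A$, since it is exactly this primitivity that licenses the application of Kobayashi--Ochiai.
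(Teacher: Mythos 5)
Your argument is correct in substance, but it reaches part $(1)$ by a genuinely different route from the paper. You invoke the Kobayashi--Ochiai bound (index $\leq n+1$, with $\mathbb{P}^n$ and $Q^n$ at the top) applied to the index $r(n-2)$, which kills all of $r\geq 3$, $n\geq 4$ at a stroke, forces $n\leq 5$ for $r=2$, and identifies the surviving smooth members as $2$-uple re-embeddings of $Q^4$ and $\mathbb{P}^5$, whence $g=5$ is the only attainable genus. The paper instead stays inside its own machinery: for $(r,g)\in\{(2,6),(3,4),(4,3)\}$ it uses $\alpha(X)\leq 1$ and Zak--L\'vovsky to rule out $2$-extendability (hence $n\geq 4$), for $(2,5)$ it uses $\alpha(X)\leq 3$ to rule out $n\geq 6$, for $(3,3)$ the degree-divisibility argument from \Cref{classification of fanos with index two}, and only for $(2,3),(2,4)$ does it fall back on the adjunction-theoretic exclusion of \cite[Theorem 3.11]{CLM98}. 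Your route is shorter and uniform, but it imports a classical classification theorem to prove a classification statement, which is exactly what the paper is trying to avoid; what you lose is any use of the $\alpha(X)$ computations that are the paper's point, and what you gain is independence from the case-by-case bounds. The one hypothesis you rightly flag --- that the Fano index really is $r(n-2)$, i.e.\ that Kobayashi--Ochiai applies to the full multiple --- is not something you need to extract from Grothendieck--Lefschetz and primitivity of $B$ on the $K3$ section: it is built into \Cref{Mukai varieties}, which defines $\mathcal{V}_{n,r,g}$ by requiring index $r(n-2)$, so that point is free.

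For part $(2)$ your approach actually makes the step you single out as ``the main obstacle'' unnecessary: once Kobayashi--Ochiai identifies the smooth Picard-rank-one members of $\mathcal{V}_{4,2,5}$ and $\mathcal{V}_{5,2,5}$ as single $\operatorname{PGL}$-orbits (the $2$-uple $Q^4$ and $\mathbb{P}^5$), irreducibility of $\mathcal{V}_{n,2,5,1}$ and the open density of the Mukai family follow immediately from the definition of $\mathcal{V}_{n,2,5,1}$ as the closure of that locus (openness because smoothness is open and Picard rank is deformation-invariant for Fanos). The iterated-cone tangent-space computation is needed in the paper precisely because the paper does \emph{not} identify the smooth members a priori and must argue that the cone, which lies on every component, is a smooth point; for the record your proposed bookkeeping does close up, since $\dim T_{C^{3}(X)}=h^0(N_X)+3\,h^0(N_X(-1))=342+63=405=h^0(N_{V/\mathbb{P}^{20}})$ for $V=\mathbb{P}^5$ re-embedded by $|\mathcal{O}(2)|$, using $h^0(N_X(-k))=0$ for $k\geq 2$ from \Cref{H^0(N(-2))}. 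So nothing in your plan fails; it is simply a hybrid in which the hard half of the paper's argument is replaced by a classical theorem and the remaining half becomes redundant.
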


\begin{proof}
   By \Cref{non-extendability of K3 surfaces}, the cases to deal with are the lower genus cases $(1)-(7)$  listed in \Cref{upper bound on cone over $K3$ surface}. Now $\mathcal{V}_{n,r,g,1} = \varnothing$ for $n \geq 4$ for cases \ref{2,6}, \ref{3,4}, \ref{4,3} since $\alpha(X) \leq 1$. $\mathcal{V}_{n,r,g,1} = \varnothing$ for $n \geq 3$ for case \ref{3,3} has been shown in \Cref{classification of fanos with index two} part $(1)$. Similarly $\mathcal{V}_{n,r,g,1} = \varnothing$ for $n \geq 6$ for case \ref{2,5} since $\alpha(X) \leq 3$.  $\mathcal{V}_{n,r,g,1} = \varnothing$ for $n \geq 4$, for cases \ref{2,3}, \ref{2,4}, can be shown by basic adjunction theory arguments, for example see \cite[Theorem $3.11$]{CLM98}. It remains to show part \ref{2,5}. It is enough to show the statement for $\mathcal{V}_{5,2,5,1}$. In this case by the same arguments in \Cref{upper bound on cone over $K3$ surface}, we see that dimension of the tangent space $T_{C(X)}(\mathcal{V}_{5,2,5,1})$ to the (triple) cone $C(X) \in \mathcal{V}_{5,2,5,1}$ over $X \in \mathcal{V}_{2,5,1}$ is given by 
   $$\operatorname{dim}T_{C(X)}(\mathcal{V}_{5,2,5,1}) = 18+2(r^2(g-1))^2+3(\alpha(X)+2+r^2(g-1)) = 405$$
   Now the result follows by comparing with the number of parameters of Mukai's family of example.

\end{proof}

\begin{corollary}\label{equality of alpha(x)}
Let \textcolor{black}{$X \subset \mathbb{P}^{M}$ be a general $K3$ surface in $\mathcal{H}_{r,g}$ embedded by the complete linear series of $rB$, where $B$ is a primitive very ample line bundle, with $B^2 = 2g-2$ and $M = 1+r^2(g-1)$.}

    \begin{enumerate}
       % \item If $r = 1$, $g = 12$, then $\alpha(X) \leq 1$
       % \item If $r = 1$, $g = 10$, then $\alpha(X) \leq 3$  
       % \item If $r = 1$, $g = 9$, then $\alpha(X) \leq 4$
       % \item If $r = 1$, $g = 8$, then $\alpha(X) \leq 6$
       % \item If $r = 1$, $g = 7$, then $\alpha(X) \leq 8$
       % \item If $r = 1$, $g = 6$, then $\alpha(X) \leq 10$ (\textcolor{blue}{exceptional case})
        \item If $r = 2$, $g = 3$, then $\alpha(X) = 10$ %(\textcolor{blue}{exceptional %case})
        %(\textcolor{blue}{Needs improvement by $1$.})
        \item If $r = 2$, $g = 4$, then $\alpha(X) = 6$ 
        \item If $r = 2$, $g = 5$, then $\alpha(X) = 3$ 
        \item If $r = 2$, $g = 6$, then $\alpha(X) = 1$ 
        \item If $r = 3$, $g = 3$, then $\alpha(X) \leq 4$
        \item If $r = 3$, $g = 4$, then $\alpha(X) = 1$ 
        \item If $r = 4$, $g = 3$, then $\alpha(X) = 1$
\end{enumerate}
\end{corollary}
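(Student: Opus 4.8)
The plan is to upgrade the upper bounds already established in \Cref{ZL for lower genus $K3$ surfaces} to equalities by combining them with the dimension count carried out in the proof of \Cref{classification of fanos with index two}. The case $(r=3,g=3)$ requires nothing new: the asserted $\alpha(X)\leq 4$ is exactly what \Cref{ZL for lower genus $K3$ surfaces} provides, and since $\mathcal{V}_{3,3,1}=\varnothing$ the cone argument below is unavailable in any case. For the six remaining cases the idea is to show that the inequality $\dim T_{C(X)}(\mathcal{V}_{r,g,1})\leq (\text{listed bound})$ coming from the upper bound on $\alpha(X)$ is in fact an equality, and then to read $\alpha(X)$ off from \Cref{upper bound on cone over $K3$ surface}.

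First I would invoke \Cref{classification of fanos with index two} for each pair $(r,g)$ among $(2,4),(2,5),(2,6),(3,4),(4,3)$: the scheme $\mathcal{V}_{r,g,1}$ is nonempty and irreducible, a general smooth Fano threefold $V$ is a smooth point with $\dim T_V(\mathcal{V}_{r,g,1})=h^0(N_{V/\mathbb{P}^{M+1}})$ equal to the listed bound, and the cone $C(X)$ lies in this same irreducible component and is itself a smooth point. Hence $\dim T_{C(X)}(\mathcal{V}_{r,g,1})=\dim \mathcal{V}_{r,g,1}=h^0(N_{V/\mathbb{P}^{M+1}})$ equals the listed value exactly. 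On the other hand, since $H^0(N_{X/\mathbb{P}^M}(-2))=0$ in these five cases by \Cref{H^0(N(-2))}, \Cref{upper bound on cone over $K3$ surface}(1) gives the exact identity
$$\dim T_{C(X)}(\mathcal{V}_{r,g,1}) = 18+(2+r^2(g-1))^2+\alpha(X)+(2+r^2(g-1)).$$
Equating the two expressions isolates $\alpha(X)$ and produces precisely the stated value, which already saturates the upper bound of \Cref{ZL for lower genus $K3$ surfaces}.

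The case $(r=2,g=3)$ is the delicate one, since here $H^0(N_{X/\mathbb{P}^M}(-2))$ need not vanish. I would instead use $\dim T_{C(X)}(\mathcal{V}_{2,3,1})=\sum_{k\geq 0}h^0(N_{X/\mathbb{P}^M}(-k))$ together with $h^0(N_{X/\mathbb{P}^M}(-2))\leq 1$ and $h^0(N_{X/\mathbb{P}^M}(-3))=0$ from \Cref{H^0(N(-2))} to obtain
$$\dim T_{C(X)}(\mathcal{V}_{2,3,1}) = 128+\alpha(X)+h^0(N_{X/\mathbb{P}^M}(-2)).$$
As before, smoothness of $C(X)$ and irreducibility of $\mathcal{V}_{2,3,1}$ force $\dim T_{C(X)}(\mathcal{V}_{2,3,1})=139$, so $\alpha(X)+h^0(N_{X/\mathbb{P}^M}(-2))=11$. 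Combined with $\alpha(X)\leq 10$ and $h^0(N_{X/\mathbb{P}^M}(-2))\leq 1$, the only possibility is $\alpha(X)=10$ (and, incidentally, $h^0(N_{X/\mathbb{P}^M}(-2))=1$).

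The main obstacle is guaranteeing that the argument is not circular: one must check that the smoothness of $C(X)$ and the computation $\dim\mathcal{V}_{r,g,1}=h^0(N_{V/\mathbb{P}^{M+1}})$ established in \Cref{classification of fanos with index two} genuinely pin down $\dim T_{C(X)}$ independently of $\alpha(X)$, so that the two expressions for the tangent dimension may legitimately be equated. The $(r=2,g=3)$ bookkeeping, where the possibly non-vanishing quadric twist $h^0(N_{X/\mathbb{P}^M}(-2))$ must be separated from $\alpha(X)$ within a single dimension equation, is where I expect the greatest care to be needed.
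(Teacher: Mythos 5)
Your proposal is correct and follows essentially the same route as the paper: both arguments equate the tangent space dimension at the cone $C(X)$ (expressed via $\alpha(X)$ and $h^0(N_{X/\mathbb{P}^M}(-2))$ through \Cref{upper bound on cone over $K3$ surface}) with the dimension $h^0(N_{V/\mathbb{P}^{M+1}})$ at a smooth Fano point established in \Cref{classification of fanos with index two}, thereby forcing the upper bounds of \Cref{ZL for lower genus $K3$ surfaces} to be equalities, with case $(r,g)=(3,3)$ left as an inequality since $\mathcal{V}_{3,3,1}=\varnothing$. Your explicit bookkeeping in the $(r=2,g=3)$ case, separating $\alpha(X)\leq 10$ from $h^0(N_{X/\mathbb{P}^M}(-2))\leq 1$ inside the single equation $\alpha(X)+h^0(N_{X/\mathbb{P}^M}(-2))=11$, is a welcome refinement of a step the paper leaves implicit, but it is not a different method.
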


\begin{proof}
    Excepting in case $(5)$, we have by \Cref{classification of fanos with index two}, $\mathcal{V}_{r,g,1}$ is non empty and that $T_{C(X)}(\mathcal{V}_{r,g,1}) = T_V(\mathcal{V}_{r,g,1})$ where $V$ is a smooth point of $\mathcal{V}_{r,g,1}$. Hence by \Cref{upper bound on cone over $K3$ surface}, we conclude that the upper bounds to $\alpha(X)$ in \Cref{ZL for lower genus $K3$ surfaces}, are actually equalities. 
\end{proof}

\section{Degeneration of $K3$ surfaces to union of Hirzebruch surfaces embedded by arbitrary linear series}

\label{main results}

In this section we show how $K3$ carpets arise as degenerations of snc union of embedded Hirzebruch surfaces intersecting along an anticanonical elliptic curve. This relates our degeneration to that of \cite{CLM93}. In \cite{CLM93}, the authors degenerate a prime $K3$ surface into a union of scrolls meeting along an elliptic curve and further degenerate such a union into a union of planes whose hyperplane sections are graph curves with corank one Gaussian maps. Similarly, one can think of our degeneration as a degeneration into a union of Hirzebruch surfaces embedded as non-scrolls, but instead of further degenerating to a union of planes, we degenerate into $K3$ carpets. To fix some notations, we start with a remark.

\begin{remark}\label{flag hilbert schemes}
Let $i: Y \hookrightarrow \mathbb{P}^M$ be an embedding of a smooth variety inside a projective space. Let $D \in |L^{-1}|$ be a smooth divisor. Then there is an induced embedding of $j: D \hookrightarrow \mathbb{P}^M$. Let $p_1$ and $p_2$ denote the Hilbert polynomials of $Y \hookrightarrow \mathbb{P}^M$ and $D \hookrightarrow \mathbb{P}^M$ respectively. Let $\mathcal{H}(p_1, p_2)$ denote the flag Hilbert scheme representing the flag-Hilbert functor as defined in \cite{Ser}, Section $4.5.1$ or \cite{KL81}. Closed points of $\mathcal{H}(p_1, p_2)$ parameterize pairs $(D' \hookrightarrow Y' \hookrightarrow \mathbb{P}^M)$ where the subschemes $Y' \hookrightarrow \mathbb{P}^M$ and $D' \hookrightarrow \mathbb{P}^M$ have Hilbert polynomials $p_1$ and $p_2$ respectively. Let $\mathcal{D}$ denote the Hilbert scheme parameterizing subschemes $(D' \hookrightarrow \mathbb{P}^M)$ with Hilbert polynomial $p_2$. There is a projection map $p: \mathcal{H}(p_1, p_2) \to \mathcal{D}$ whose fibre $\mathcal{F}_D$ at a subscheme $(D \hookrightarrow \mathbb{P}^M)$ consists of all those subschemes $Y' \hookrightarrow \mathbb{P}^N$ with Hilbert polynomial $p_1$ such that $D \subset Y'$. Let $\mathcal F_{D,Y}$ denote the union of irreducible components of $\mathcal F_D$ containing the point $(D \hookrightarrow Y\hookrightarrow \mathbb{P}^M)$. Closed points parameterized by $\mathcal{F}_{D,Y}$ correspond to deformations over an irreducible curve of the subvariety $Y$ inside $\mathbb{P}^M$ which keeps $D$ fixed.     
\end{remark}

\begin{theorem}\label{Hirzebruch main}
Let $Y = \mathbb{F}_e \hookrightarrow \mathbb{P}^N$ be an embedding of the Hirzebruch surface $\mathbb{F}_e$, with $e \leq 2$, induced by the complete linear series of the very ample line bundle $|aC_0+bf|$ (hence $b \geq ae+1$). Let $i: \mathbb{P}^N \to \mathbb{P}^M$ be a linear embedding with $M = N + h^0((a-2)C_0+(b-e-2)f)$ and consider the composed (possibly degenerate) embedding $Y \hookrightarrow \mathbb{P}^M$. Let $E \in |-K_Y|$ denote a smooth elliptic curve. 

\begin{itemize}
    \item[(1)] If $Y_1$ is the subvariety $Y$ of $\mathbb P^M$ and $Y_2$ is a subvariety of $\mathbb P^M$ that corresponds to a general element of $\mathcal{F}_{E,Y}$, then {the scheme theoretic intersection of $Y_1 and Y_2$ is $E$ and} {$V = Y_1 \bigcup_E Y_2$} is  smoothable inside $\mathbb{P}^M$ to a \textit{smooth $K3$ surface} embedded by the complete linear series of a very ample line bundle. The Hilbert point of $V$ inside $\mathbb{P}^M$ is smooth. 
    %\begin{enumerate}
     %   \item[(a)] If $M = N$, the subvarieties $V$ and the $K3$ surfaces are embedded by a sublinear series of codimension $$h^0((a-2)C_0+(b-e-2)f) = \displaystyle\frac{(a-1)}{2}(2b-2-ae)$$ of a very ample line bundle.
      %  \item[(b)] If $M = N + h^0((a-2)C_0+(b-e-2)f)$, the subvarieties $V$ and the $K3$ surfaces are embedded by a complete linear series  of a very ample line bundle.
    %\end{enumerate}
     Further the closure inside the Hilbert scheme of the locus of varieties of the form $V = Y_1 \bigcup_E Y_2$ contains embedded $K3$ carpets $\widetilde{Y} \hookrightarrow \mathbb{P}^M$ which extends the embedding $Y \hookrightarrow \mathbb{P}^M$.  
    
    \smallskip
    
    \item[(2)] The locally trivial deformations of $V$ inside $\mathbb{P}^M$ are unobstructed and form a subspace in the tangent space of the Hilbert scheme at $[V]$ of codimension $h^0(-2K_Y|_E) = 6$. Furthermore, this subspace is the tangent space at $[V]$ of an irreducible locus of the Hilbert scheme, which has codimension $h^0(-2K_Y|_E) = 6$ and is smooth at $[V]$. The singularities of the subschemes parameterized by this locus are normal crossing singularities;  in particular, they are non-normal and semi-log-canonical. They are analytically isomorphic to $(x_1^2+x_2^2 = 0 \subset \mathbb{C}^M)$. 
   % \item[(3)] The general fiber of any one-parameter family of embedded deformations of $V$ whose image under the Kodaira-Spencer map is a  locally trivial deformation of $V$, is once again a simple normal crossing variety $V' = Y_1' \bigcup_D' Y_2'$, where both $Y_1'$ and $Y_2'$ are deformations of the subvariety $Y$ and $D'$ is a deformation of the subscheme $D$. 
    \smallskip
    
    \item[(3)] The general smooth surface of the Hilbert component containing $V$ is a 
    \begin{itemize}
        \item[(a)] prime $K3$ surface if $\textrm{gcd}(a,b) = 1$ 
        \item[(b)] a non-prime $K3$ surface embedded by $rB$, with $r = \operatorname{gcd}(a,b)$ and $B$ is a primitive very ample line bundle. 
    \end{itemize}
\end{itemize}

\end{theorem}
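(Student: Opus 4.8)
The unifying idea is to place both $V=Y_1\cup_E Y_2$ and the $K3$ carpet $\widetilde Y$ of \Cref{degeneration into $K3$ carpets} in a single irreducible component of the Hilbert scheme of $\mathbb P^M$ and to transfer smoothability from $\widetilde Y$ to $V$. For part (1) I would first show $Y_1\cap Y_2=E$ scheme-theoretically: $E\subseteq Y_1\cap Y_2$ by construction, and the first-order directions of a general $Y_2\in\mathcal F_{E,Y}$ are $H^0(N_{Y/\mathbb P^M}(-E))=H^0(N_{Y/\mathbb P^M}(K_Y))$; using \Cref{9} and \Cref{11} one checks that $N_{Y/\mathbb P^M}(K_Y)$ is globally generated off $E$, so a general $Y_2$ separates from $Y_1$ away from $E$, giving $Y_1\cap Y_2=E$ with reduced structure and a normal crossing $V$. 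The Mayer--Vietoris sequence $0\to\mathcal O_V\to\mathcal O_{Y_1}\oplus\mathcal O_{Y_2}\to\mathcal O_E\to 0$ yields $\chi(\mathcal O_V)=2$ and $h^1(\mathcal O_V)=0$, while $\omega_V|_{Y_i}=\omega_{Y_i}(E)=\mathcal O_{Y_i}$ gives $\omega_V\cong\mathcal O_V$, so $V$ is a Type II degenerate $K3$. Letting $Y_2$ specialize to $Y_1$ inside $\mathcal F_{E,Y}$, the flat limit of $Y_1\cup_E Y_2$ is a ribbon on $Y$ with conormal bundle $K_Y$, i.e. precisely the carpet $\widetilde Y$, exactly as two transverse branches through $E$ collapse to a double structure.

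For smoothness of $[V]$ and smoothability, the plan is to prove $H^1(N_{V/\mathbb P^M})=0$ by decomposing $N_{V/\mathbb P^M}$ via Mayer--Vietoris into $N_{Y_1/\mathbb P^M}$, $N_{Y_2/\mathbb P^M}$ and the $T^1$ contribution along $E$, reducing everything to vanishings on the Hirzebruch surfaces and on $E$ of the kind already assembled in Lemmas \ref{2C_0+ef-H}--\ref{T_Y(-H)}. This makes $[V]$ a smooth Hilbert point. Since $\widetilde Y\in\overline{\{V\}}$ by the previous step and $[\widetilde Y]$ is a smooth Hilbert point that is smoothable to a genuine $K3$ by \Cref{degeneration into $K3$ carpets}(2), the unique component through $[V]$ coincides with the component through $[\widetilde Y]$, whose general member is a smooth $K3$ embedded by a complete very ample series; hence $V$ is so smoothable, completing (1).

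For part (2), the locally trivial embedded deformations are $H^0(N'_{V/\mathbb P^M})$, where $N'_{V/\mathbb P^M}=\ker\big(N_{V/\mathbb P^M}\twoheadrightarrow T^1_V\big)$ and $T^1_V\cong N_{E/Y_1}\otimes N_{E/Y_2}$ is the first cotangent sheaf supported on $E$; the same Mayer--Vietoris bookkeeping gives $H^1(N'_{V/\mathbb P^M})=0$ (unobstructedness), and the image of $H^0(N_{V/\mathbb P^M})\to H^0(T^1_V)$, which measures the realized smoothing directions, is computed to have dimension $h^0(-2K_Y|_E)=6$, so the locally trivial deformations form a codimension $6$ subspace. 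Parametrizing the $V$'s by the irreducible data $(E,Y_1,Y_2)$ drawn from \Cref{flag hilbert schemes} exhibits an irreducible locus, smooth at $[V]$ by unobstructedness, of codimension $6$ and with tangent space $H^0(N'_{V/\mathbb P^M})$; transversality of $Y_1,Y_2$ along $E$ yields the analytic local model $x_1^2+x_2^2=0$, so the singularities are normal crossing, hence non-normal and semi-log-canonical. Part (3) is then immediate, since the general member of the component of $V$ is the same smooth $K3$ as for $\widetilde Y$, so the prime/non-prime dichotomy is inherited verbatim from \Cref{degeneration into $K3$ carpets}(3).

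The main obstacle is the deformation-theoretic core: executing the Mayer--Vietoris analysis of $N_{V/\mathbb P^M}$ and $N'_{V/\mathbb P^M}$ with enough control of the connecting maps along $E$ to force $H^1=0$ and to pin down the image in $H^0(T^1_V)$. Comparably delicate is verifying that the flat limit of $Y_1\cup_E Y_2$ as $Y_2\to Y_1$ is exactly the carpet $\widetilde Y$ rather than some other ribbon, since it is this identification that chains $V$ to the already-smoothable $\widetilde Y$.
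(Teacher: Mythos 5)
Your skeleton coincides with the paper's: both proofs hinge on putting $V=Y_1\cup_E Y_2$ and the carpet $\widetilde Y$ into one flat family, transferring smoothability and smoothness of the Hilbert point from the carpet to $V$, and both treat part (2) through the sequence $0\to N'_{V/\mathbb P^M}\to N_{V/\mathbb P^M}\to T^1_V\to 0$. The differences lie exactly at the two points you flag as obstacles, and there your plan has genuine gaps. The identification of the flat limit of $Y_1\cup_E Y_2$, as $Y_2$ specializes to $Y_1$ inside $\mathcal F_{E,Y}$, with the $K3$ carpet is not secured by your argument: the limit is a priori only some double structure on $Y$ containing $E$, and for it to be a ribbon (locally Cohen--Macaulay, no embedded points) with conormal bundle $K_Y$ one needs the normal direction of approach, a section of $N_{Y/\mathbb P^M}\otimes K_Y$, to be nowhere vanishing \emph{on all of $Y$}, including along $E$; global generation of $N_{Y/\mathbb P^M}\otimes K_Y$ off $E$ does not give this. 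The paper does not reprove this fact; it verifies the hypotheses of \cite[Theorem $2.10$]{BGM24} --- existence of nowhere vanishing sections of $N_{Y/\mathbb P^M}\otimes K_Y$ (from \cite{GP97} for $a=1$ and \cite{BMR21} for $a\geq 2$), $H^1(N_{Y/\mathbb P^M})=0$, $H^1(N_{Y/\mathbb P^M}\otimes K_Y)=0$, and smoothability of $\widetilde Y$ with smooth Hilbert point --- and that theorem simultaneously delivers the scheme-theoretic intersection, the degeneration of the snc locus to the carpet, and the smoothness of $[V]$. Without this input your chain ``$\widetilde Y\in\overline{\{V\}}$, hence one component, hence $V$ smoothable'' never gets started.

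The second gap is your plan to prove $H^1(N_{V/\mathbb P^M})=0$ and to compute the image of $H^0(N_{V/\mathbb P^M})\to H^0(T^1_V)$ by a direct Mayer--Vietoris analysis ``reducing to Lemmas \ref{2C_0+ef-H}--\ref{T_Y(-H)}''. Those lemmas compute cohomology of line bundles and of $T_Y$ twisted by $-H$ and $-H+K_Y$; they say nothing about the untwisted normal bundle of $V$ or about the connecting maps along $E$, so the reduction you invoke does not exist as stated. The paper deliberately avoids this computation: it first produces the lci family $\mathcal Y\to T$ with $\mathcal Y_0=\widetilde Y$ and $\mathcal Y_t=V$, obtains $H^1(N_{V/\mathbb P^M})=0$ by semicontinuity from $H^1(N_{\widetilde Y/\mathbb P^M})=0$, and then proves surjectivity onto $H^0(T^1)$ at the carpet, where the ribbon sequences \ref{1}--\ref{3} together with the vanishings $H^1(\mathcal Hom(I_{\widetilde Y}/I_Y^2,\mathcal O_Y))=0$ and $H^1(N_{\widetilde Y/\mathbb P^M}(K_Y))=0$ from \cite{GP97}, \cite{BMR21} make the computation tractable, before transporting it back to $V$. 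Note the logical order this forces: the degeneration to the carpet must be established \emph{before} the cohomological vanishings for $V$, not after. Your part (3) is fine once the component of $V$ is identified with that of $\widetilde Y$, since it is then inherited from \Cref{degeneration into $K3$ carpets}.
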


\begin{proof}
Let us prove part $(1)$. We apply \cite[Theorem $2.10$]{BGM24}. First of all, there exists nowhere vanishing sections inside $H^0(N_{Y/\mathbb{P}^M} \otimes K_Y)$ corresponding to embedded ribbons $\widetilde{Y}$ by \cite[Theorem $1.7$]{GP97} for $a = 1$ and  by \cite[Theorem $3.1$]{BMR21} for $a \geq 2$. A simple computation  shows that $H^1(N_{Y/\mathbb{P}^M}) = 0$ and hence $Y$ is unobstructed in $\mathbb{P}^M$. If $a = 1$, then we have that $H^1(N_{Y/\mathbb{P}^M} \otimes K_Y) = 0$ by \cite[Theorem $1.7$]{GP97} while if $a \geq 2$, the same vanishing follows from \cite[Lemma $2.7$, $(2)$]{BMR21} and hence, the functor $F_{E, Y}$ is unobstructed. If $a = 1$, the smoothability of $\widetilde{Y}$ inside $\mathbb{P}^M$ and the smoothness of its Hilbert point follows from \cite[Corollary $2.9$]{GGP13} and \cite[Theorem $4.1$]{GP97} respectively while if $a \geq 2$, the same follows from \cite[Theorem $4.1$]{BMR21} and \cite[Theorem $5.1$]{BMR21} respectively. 

Therefore part $(1)$ follows from Theorem \cite[Theorem $2.10$]{BGM24}. \\
To show part $(2)$ we need to show that $H^1(N_{V/\mathbb{P}^M}') = 0$ for one subvariety of the form $V = Y_1 \bigcup_E Y_2$ where $Y_2 \in \mathcal{F}_{E,Y}$. For this, first note that for any such $V$ we have an exact sequence 
$$0 \to N_{V/\mathbb{P}^M}' \to N_{V/\mathbb{P}^M} \to T_V^1 \to 0$$
Note that by part $(1)$, we have flat family $\mathcal{Y} \to T$ over an smooth irreducible curve $T$, such that $\mathcal{Y}_0$ is an embedded $K3$ carpet $\widetilde{Y}$ on $Y$ and $\mathcal{Y}_t$ is a subvariety of the form $V = Y_1 \bigcup_E Y_2$. When $a \geq 2$, by \cite[Theorem $5.1$, equation $(5.9)$]{BMR21}, we have that $H^1(N_{\widetilde{Y}/\mathbb{P}^M}) = 0$, while for $a = 1$, the same holds for by \cite[Theorem $4.1$]{GP97}. Now since the family $\mathcal{Y} \to T$ is local complete intersection, we have that $N_{\mathcal{Y}/\mathbb{P}_T^M}$ is locally free and hence flat over $T$. Therefore by semi-continuity, $H^1(N_{\mathcal{Y}_t/\mathbb{P}^M}) = 0$, i.e, $H^1(N_{V/\mathbb{P}^M}) = 0$. The vanishing of $H^1(N_{V/\mathbb{P}^M}')$ now follows if we show that $H^0(N_{V/\mathbb{P}^M}) \to H^0(T_V^1)$ is a surjection. Once again, we show this surjection by passing to the ribbon, i.e, we show that $H^0(N_{\widetilde{Y}/\mathbb{P}^M}) \to H^0(T_{\widetilde{Y}}^1)$ is surjection. First note that the sheaf $T_{\widetilde{Y}}^1 = \mathcal{O}_Y(-2K_Y)$ and sits as the last term in the exact sequence 
\begin{equation*}
    0 \to \mathcal{H}om(I_{\widetilde{Y}}/I_{Y}^2, \mathcal{O}_Y) \to N_{\widetilde{Y}/\mathbb{P}^M} \otimes \mathcal{O}_Y \to \mathcal{O}_Y(-2K_Y) \to 0
\end{equation*}

while the map $H^0(N_{\widetilde{Y}/\mathbb{P}^M}) \to H^0(T_{\widetilde{Y}}^1)$ factors as $H^0(N_{\widetilde{Y}/\mathbb{P}^M}) \to H^0(N_{\widetilde{Y}/\mathbb{P}^M} \otimes \mathcal{O}_Y) \to H^0(T_{\widetilde{Y}}^1)$ where the first map is obtained by taking the cohomology of the exact sequence

\begin{equation*}
 0 \to N_{\widetilde{Y}/\mathbb{P}^M} (K_Y) \to N_{\widetilde{Y}/\mathbb{P}^M} \to N_{\widetilde{Y}/\mathbb{P}^M} \otimes \mathcal{O}_Y \to 0   
\end{equation*}

So to show the surjection it is enough to that $H^1(\mathcal{H}om(I_{\widetilde{Y}}/I_{Y}^2, \mathcal{O}_Y)) = 0$ and $H^1(N_{\widetilde{Y}/\mathbb{P}^M} (K_Y)) = 0$. For $a = 1$, these vanishings follow from the proof of \cite[Theorem $4.1$]{GP97} and for $a \geq 2$, from \cite[Theorem $5.1$]{BMR21}.

%from the following two lemmas which are generalizations of \cite[Lemma $(2)$]{CLM93} and \cite[Corollary $(1)$]{CLM93} for any $a \geq 1$, whose proof works exactly as in \cite{CLM93} 
%(note that we have already showed in Theorem \ref{snc to carpet} that $H^1(N_{Y/\mathbb{P}^M}) = 0$)  
%\begin{lemma}\label{alternate T^1}
%The sheaf $T_V^1$ is the cokernel of the inclusion $N_{Y/\mathbb{P}^M} \to N_{V/\mathbb{P}^M}|_Y$
%\end{lemma}

%\begin{lemma}\label{alternate smoothing}
%The map of global sections from $H^0(N_{V/\mathbb{P}^M}) \to H^0(T_V^1)$ is a surjection. 
%\end{lemma}

%Part $(3)$ follows from \Cref{degeneration into $K3$ carpets}, part $(3)$.

\end{proof}

\color{black}

\vspace{0.5cm}

\bibliographystyle{plain}

\end{document}